\numberwithin{figure}{section}
\newtheorem{theorem}{Theorem}[section]
\newtheorem{lemma}[theorem]{Lemma}
\newtheorem{corollary}[theorem]{Corollary}
\newtheorem{main theorem}[theorem]{Main Theorem}
\newtheorem{proposition}[theorem]{Proposition}
\newtheorem{definition}[theorem]{Definition}
\newtheorem{construction}[theorem]{Construction}
\newtheorem{remark}[theorem]{Remark}
\newtheorem{example}[theorem]{Example}
\newtheorem{question}[theorem]{Question}
\numberwithin{equation}{section}
\def\<{\langle} 
\def\>{\rangle} 
\def\NN{\mathbb{N}} 
\def\ZZ{\mathbb{Z}} 
\newcommand{\Pic}{F{\tiny{IGURE}}\ }
\newcommand{\modcat}{\mathsf{mod}}
\newcommand{\projcat}{\mathsf{proj}}
\newcommand{\ind}{\mathsf{ind}}
\newcommand{\str}{\mathsf{Str}}
\newcommand{\band}{\mathsf{Band}}
\newcommand{\rad}{\mathrm{rad}}
\newcommand{\kk}{\mathds{k}} 
\newcommand{\Q}{\mathcal{Q}} 
\newcommand{\I}{\mathcal{I}} 
\newcommand{\M}{\mathds{M}}
\newcommand{\Hom}{\mathrm{Hom}}
\newcommand{\End}{\mathrm{End}}
\newcommand{\Ext}{\mathrm{Ext}}
\newcommand{\Tor}{\mathrm{Tor}}
\newcommand{\Irr}{\mathrm{Irr}}
\newcommand{\C}{\mathscr{C}}
\newcommand{\scrG}{\mathscr{G}}
\newcommand{\frakv}{\mathfrak{v}}
\newcommand{\Gproj}{\mathsf{G}\text{-}\mathsf{proj}}
\newcommand{\NTIGproj}{\mathsf{NTIG}\text{-}\mathsf{proj}}
\newcommand{\perpath}{\mathsf{per}.\mathsf{path}}
\newcommand{\CMA}{\mathrm{CMA}}
\newcommand{\pd}{\mathrm{proj.dim}}
\newcommand{\id}{\mathrm{inj.dim}}
\newcommand{\gldim}{\mathrm{gl.dim}}
\newcommand{\HH}{\mathrm{H}}
\newcommand{\hl}{\mathrm{hl}}
\newcommand{\hw}{\mathrm{hw}}
\newcommand{\hr}{\mathrm{hr}}
  \def\defines{\it}
\begin{document}

\title[The CM-Auslander algebras of string algebras]{The Cohen-Macaulay Auslander algebras of string algebras}
\thanks{$^{\ast}$Corresponding author.}
\thanks{MSC2020:
16G10, 
16G70, 
16P10. 
}
\thanks{Key words: Gorenstein-projective module, representation-type, gentle algebra, self-injective dimension. }
\author{Yu-Zhe Liu}
\address{Y.-Z. Liu, School of Mathematics and statistics, Guizhou University, Guiyang 550025,
P. R. China}
\email{liuyz@gzu.edu.cn \text{/} yzliu3@163.com}

\author{Chao Zhang$^{\ast}$}
\address{C. Zhang, School of Mathematics and statistics, Guizhou University, Guiyang 550025,
P. R. China}
\email{zhangc@amss.ac.cn}





\begin{abstract}
  The Cohen-Macaulay Auslander algebra of any string algebra is explicitly constructed in this paper.
  Furthermore, we show that a class of special string algebras, which are called to be string algebras with {\it G-condition}, are representation-finite
  if and only if their Cohen-Macaulay Auslander algebras are representation-finite. Finally,
  the self-injective dimension of gentle algebras is characterized using their Cohen-Macaulay Auslander algebras.
\end{abstract}

\maketitle

\section{Introduction}

Gorenstein projective (=G-projective) modules were introduced by Auslander in \cite{A1967, AB1969},
which are natural generalization of projective modules in order to provide some homological invariants of Noetherian rings.
In \cite{AB1989, B1986}, Auslander and Buchweitz gave a more categorical approach to the study of G-projective modules.
After that, algebraists found some important applications of G-projective modules in commutative algebra, algebraic geometry and relative homological algebra \cite[etc]{B2005, B2011, C2018, EJ2000, IKM2011, LZ2010, LuoZ2013, XZ2012, Zhang2013Gorenstein}.
Recently, it is shown that a module $X$ over Gorenstein ring $A$ is Gorenstein-projective
if and only if $\Ext_A^{\ge 1}(X, \mathrm{Add}P)=0$ \cite[Corollary 11.5.3]{EJ2000}.
Furthermore,  Beligiannis proved that, for any Gorenstein algebra $A$, a module $X$ is G-projective if and only if $\Tor^A_{\ge 1}(D({_AA}), X)=0$ \cite[Proposition 3.10]{B2005}.
If $A \cong \kk\Q/\I$ is a monomial algebra, then G-projective modules can be described in terms of PPSs on the bounded quiver $(\Q, \I)$ of $A$ \cite{C2018}.
Moreover, one can construct Gorenstein-projective modules over triangular matrix algebra
$\left(\begin{smallmatrix} A & M \\ 0 & A' \end{smallmatrix}\right)$,
where $M = {_AM_{A'}}$ is an $(A, A')$-bimodule
(see \cite{LZ2010, IKM2011, XZ2012, LuoZ2013, Zhang2013Gorenstein}).

The Auslander algebra of an artin algebra $A$, defined in \cite{A1971} as the endomorphism algebra of all indecomposable $A$-modules, is characterized as algebras of global dimension at most 2
and dominant dimension at least 2. To study the algebras of finite Cohen-Macaulay type (=finite CM-type),  Beligiannis introduced the Cohen-Macaulay Auslander algebra (=CM-Auslander algebra) $B$ of an algebra $A$, in a similar way with the Auslander algebra,
as the endomorphism algebra of the direct sum of all indecomposable G-projective left (resp.  right) $A$-modules in \cite{B2011},  which is also called the relative Auslander algebra.
CM-Auslander algebra has many important properties.  It is proved that the homological dimensions of $A$ are closely related to that of $B$ (see \cite[Section 6]{B2011}).
If $A$ is CM-finite, then $A$ is Gorenstein if and only if $\gldim B < \infty$
\cite{LZ2010CMAuslander}. Moreover,
the CM-Auslander algebra of CM-finite algebra is CM-free, see \cite[Theorem 4.5]{KZ2016}.
Many other properties of $A$ can be reflected by its CM-Auslander algebra,
such as \cite{CL2017, CL2019, Haf2018CMAuslander, Pan2012CMAuslander, Qin2020CMAuslander}.
Then it is natural to consider the question as follows.
\begin{question} \label{Q1}
How to represent the CM-Auslander algebra for an algebra of finite CM-type?
\end{question}
In \cite{CL2017, CL2019}, Chen and Lu provided a method to construct the CM-Auslander algebras of all (skew-)gentle algebras
and showed that (skew-)gentle algebras are representation-finite if and only if
their CM-Auslander algebras are representation-finite.
Thus, a further question as follows can be raised naturally.
\begin{question} \label{Q2}
If $A$ {\rm(}resp.  $B${\rm)} is a representation-finite algebra,
then is $B$ {\rm(}resp.  $A${\rm)} a representation-finite algebra?
\end{question}

Throughout this paper, $\kk$ is an algebraically closed field
and any finite dimensional algebra $A$ is isomorphic to $\kk\Q/\I$,
where $\Q$ is a finite quiver and $\I$ is an admissible ideal.
An quiver $\Q$ is a quadruples $(\Q_0, \Q_1, s, t)$, where $\Q_0$ and $\Q_1$ are finite sets of vertices and arrows,
$s$, $t$ are functions sending any arrow in $\Q_1$ to its starting point and ending point, respectively.
For arbitrary two arrows $a_1$ and $a_2$ with $t(a_1)=s(a_2)$,
the composition of $a_1$ and $a_2$ is denoted by $a_1a_2$. All modules over $A$ are finite generated right $A$-modules, we denote by $\modcat A$ the finite generated right $A$-module category of $A$.
For any full subcategory $\mathcal{C}$ of $\modcat A$, we denote by $\ind(\mathcal{C})$ the full subcategory of $\mathcal{C}$ containing all indecomposable objects lying $\mathcal{C}$.
For any module $M \in \modcat A$,  denote by $\pd M$ and $\id M$ the projective dimension and injective dimension of $M$, respectively.
Moreover, $\gldim A$ is the global dimension of $A$.

The present paper is devoted to describing all G-projective modules over string algebras,
and calculating all irreducible morphisms in $\Gproj A$ by the description of Auslander-Reiten quiver of string algebra $A$ provided in \cite{BR1987},
where $\Gproj A$ is the full subcategory of $\modcat A$ induced by indecomposable G-projective $A$-module.
To be more precise, all G-projective modules over string algebra are provided by some oriented cycles of $(\Q, \I)$,
and the CM-Auslander algebras of string algebras can be characterized as in the following theorem.

\begin{theorem} \label{thm:1} {\rm (Theorem \ref{thm:the CMA of string})}
Let $A$ be a string algebra and $B$ its CM-Auslander algebra.
Then $B \cong A^{\CMA} = \kk\Q^{\CMA}/\I^{\CMA}$, where $(\Q^{\CMA}, \I^{\CMA})$ is the bounded quiver given by Construction \ref{const:CMA}.
\end{theorem}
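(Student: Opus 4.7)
The plan is to first parametrize the set $\ind(\Gproj A)$ of indecomposable Gorenstein-projective $A$-modules explicitly, then compute the irreducible morphisms in $\Gproj A$ by reading off the relevant part of the Auslander-Reiten quiver of $\modcat A$, and finally identify the endomorphism algebra $B = \End_A\bigl(\bigoplus_{X \in \ind(\Gproj A)} X\bigr)$ with $\kk\Q^{\CMA}/\I^{\CMA}$ by comparing its Gabriel quiver and admissible relations with the output of Construction \ref{const:CMA}.

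For the first step I would use that a string algebra is monomial, so by Chen's classification \cite{C2018} each indecomposable non-projective G-projective module is the string module attached to a perfect path of $(\Q,\I)$; for a string algebra these perfect paths assemble precisely into the oriented cycles of $(\Q,\I)$ mentioned in the introduction. In particular every object of $\ind(\Gproj A)$ is a Butler-Ringel string module, and the new vertices that Construction \ref{const:CMA} inserts into $\Q_0$ are labelled exactly by these perfect paths (together with the original indecomposable projectives).

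For the second step I would invoke the combinatorial description of the Auslander-Reiten quiver of $\modcat A$ from \cite{BR1987}, where irreducible morphisms between string modules are realized as addition or deletion of hooks and cohooks at the endpoints of the underlying string. In the CM-finite case, which is the one for which Theorem \ref{thm:1} produces an honest finite-dimensional CM-Auslander algebra, $\Gproj A$ is functorially finite and the Auslander-Reiten translation restricts to $\Gproj A$, so the AR-sequences of $\Gproj A$ are obtained from those of $\modcat A$ by deleting non-G-projective middle summands. Filtering the hook/cohook moves that begin or end at a perfect-path string module yields precisely the arrows of $\Q^{\CMA}$ prescribed in Construction \ref{const:CMA}.

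The main obstacle, and where I expect most of the work to concentrate, is the identification of the relation ideal $\I^{\CMA}$. Once the arrows of $\Q^{\CMA}$ are fixed by the irreducible morphisms, one must show that the kernel of the natural surjection $\kk\Q^{\CMA}\twoheadrightarrow B$ is generated exactly by the relations listed in Construction \ref{const:CMA}. I would handle this by a case analysis according to the relative position of the endpoints of a composable pair of hook/cohook moves with respect to the oriented cycles parametrizing the G-projectives: either the composition lies in a mesh relation coming from an AR-sequence in $\Gproj A$, or it is inherited from a zero relation in $\I$, or it remains non-zero and represents a genuine path in $\Q^{\CMA}$. A uniform bookkeeping based on the Butler-Ringel combinatorics of strings attached to perfect paths should match all these relations with those written down in Construction \ref{const:CMA}, yielding the claimed isomorphism $B\cong\kk\Q^{\CMA}/\I^{\CMA}$.
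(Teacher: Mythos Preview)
Your three-step outline (parametrize $\ind(\Gproj A)$ via Chen's perfect paths, determine the arrows, verify the relations) matches the paper's strategy, and your first step is exactly what the paper does.  The gap is in your second step.  The assertion that ``the AR-sequences of $\Gproj A$ are obtained from those of $\modcat A$ by deleting non-G-projective middle summands'' and that ``filtering the hook/cohook moves that begin or end at a perfect-path string module yields precisely the arrows of $\Q^{\CMA}$'' is not correct as stated.  An irreducible morphism in $\Gproj A$ is one that does not factor nontrivially \emph{inside} $\Gproj A$; it may very well factor in $\modcat A$ through non-G-projective string modules, hence be a composite of several Butler--Ringel hook/cohook moves rather than a single one.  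Concretely, for a perfect path $p$ of length $\ge 2$ the morphisms $P(t(p))\to pA$ and $pA\to P(s(p))$ are irreducible in $\Gproj A$ but are typically long composites in the AR-quiver of $\modcat A$.  Simply restricting or ``filtering'' the Butler--Ringel arrows would miss these, and deleting summands from an AR-sequence in $\modcat A$ does not produce an AR-sequence in $\Gproj A$ (the relative AR-translation on $\Gproj A$ is $\Omega$, not the restriction of $\tau$).

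The paper fills exactly this gap with Lemmas~\ref{lemm:Gproj(I)}, \ref{lemm:pA-left}, \ref{lemm:pA-right} and \ref{lemm:Gproj-length=1}: rather than appealing to relative AR-theory, it computes directly that the canonical map $\hbar_p\colon P(t(p))\to P(s(p))$ factors through $pA$, that neither factor can be further decomposed through a projective, and that between two NTIG-projectives $p_{[1,n]}A$ and $p_{[1,l]}A$ on the same relation cycle the space of irreducible morphisms in $\Gproj A$ is one-dimensional precisely when no intermediate $p_{[1,m]}A$ is G-projective.  These direct Hom-space computations (using the string combinatorics, but not by reading arrows off the AR-quiver of $\modcat A$) are what actually identifies $\Q^B$ with $\Q^{\CMA}$.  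Once the arrows are pinned down this way, the verification of the three relation types $(\mathcal{R}1)$--$(\mathcal{R}3)$ is a straightforward check that path compositions in $A$ match the prescribed generators, much as you sketch; but your plan as written does not yet supply the key irreducibility arguments.  (A minor point: your caveat about ``the CM-finite case'' is unnecessary, since every string algebra is CM-finite by Corollary~\ref{coro:CM-fin/free}.)
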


\noindent The above theorem provides a partial answer of Question \ref{Q1} in the case of string algebras.
Furthermore, one can judge whether the CM-Auslander algebra of a string algebra $A$
is representation-finite or not.
As an application, we find that Question \ref{Q2} does not hold in general (see Examples \ref{fig:CMA of A in ex} and \ref{fig:CMA of A in ex:self-inj}).
However, it is true for string algebras with {\it G-condition}
i.e., the string algebras whose all oriented cycles providing at least one G-projective module are gentle relation cycles
(see subsection \ref{subsec:G-cond}),
as shown in the following theorem.

\begin{sloppypar} 
\begin{theorem} \label{thm:2} {\rm (Theorem \ref{thm:G-condition})}
Let $A$ be a string algebra satisfying G-condition and $A^{\CMA}$ its CM-Auslander algebra.
Then $A$ is representation-finite if and only if $A^{\CMA}$ is representation-finite.
\end{theorem}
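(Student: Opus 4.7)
The plan is to reduce both implications to the classical dichotomy that a string algebra is representation-finite if and only if its bound quiver admits no \emph{band} (a primitive cyclic reduced walk whose arbitrary powers avoid the relations). Since Theorem \ref{thm:the CMA of string} provides an explicit description of $(\Q^{\CMA},\I^{\CMA})$, a natural first step is to verify that, under the G-condition, $A^{\CMA}$ is again a string algebra, so that this band-characterization applies on both sides.

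For the direction $A^{\CMA}$ representation-finite $\Rightarrow$ $A$ representation-finite I would argue that the bound quiver $(\Q,\I)$ embeds in $(\Q^{\CMA},\I^{\CMA})$ as a bound subquiver: by Construction \ref{const:CMA}, the CM-Auslander quiver is obtained from $\Q$ by adjoining new vertices (one per non-projective indecomposable G-projective) together with connecting arrows, and the ideal $\I^{\CMA}$ restricts to $\I$ on the original arrows. Consequently any band of $(\Q,\I)$ is already a band of $(\Q^{\CMA},\I^{\CMA})$, and the contrapositive yields the desired implication.

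The converse, $A$ representation-finite $\Rightarrow$ $A^{\CMA}$ representation-finite, is where the G-condition enters essentially. My strategy has three steps. First, using Construction \ref{const:CMA} together with the assumption that every oriented cycle contributing a G-projective is a gentle relation cycle, I would show that $A^{\CMA}$ is a string algebra whose strings decompose as strings of $A$ joined by controlled \emph{detours} through the new vertices. Second, I would analyse the local picture near each new vertex $v_C$ attached to a gentle relation cycle $C$: by the gentle condition at most two arrows meet $v_C$ at each end, and the relations in $\I^{\CMA}$ force any reduced walk passing through $v_C$ to follow a uniquely determined continuation. Third, assuming for contradiction that $(\Q^{\CMA},\I^{\CMA})$ admits a band $w$, and using that $(\Q,\I)$ has none by hypothesis, $w$ must traverse at least one new arrow; tracing $w$ through each new vertex via the second step, I would project $w$ onto a cyclic reduced walk in $(\Q,\I)$ that remains a band, contradicting representation-finiteness of $A$.

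The main obstacle is the projection argument in the third step. The difficulty is purely combinatorial: one has to check that the rigid gentle structure of each contributing cycle really forces every band of $A^{\CMA}$ to \emph{descend} to a band of $A$, with no new cyclic walks being produced at a new vertex acting as a branching point. This descent is precisely what the G-condition is designed to guarantee, and its failure in the general string setting is reflected by Examples \ref{fig:CMA of A in ex} and \ref{fig:CMA of A in ex:self-inj}, in which a non-gentle contributing cycle allows the CM-Auslander construction to introduce a genuinely new band even though the original string algebra had none (or vice versa). Carrying out this local-to-global analysis carefully is, I expect, the bulk of the proof.
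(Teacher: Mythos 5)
Your overall strategy --- reduce both directions to the band criterion for representation-finiteness of string algebras, after first checking that the G-condition makes $A^{\CMA}$ again a string algebra --- is exactly the route the paper takes (via Corollary \ref{coro:the CMA of string G}). Your converse direction, projecting a band of $(\Q^{\CMA},\I^{\CMA})$ back to a band of $(\Q,\I)$ by tracing it through the new vertices, is the same idea as the paper's observation that circuits of $(\Q^{\CMA},\I^{\CMA})$ correspond bijectively to circuits of $(\Q,\I)$ and that each generator $\wp\in\I$ on a circuit survives as $\wp^{*}\in\I^{\CMA}$ on the corresponding circuit; as a plan this is sound.

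However, one step of your first implication is wrong as stated. You claim that $(\Q,\I)$ embeds in $(\Q^{\CMA},\I^{\CMA})$ as a bound subquiver, so that every band of $A$ is \emph{already} a band of $A^{\CMA}$. This fails: by Construction \ref{const:CMA} (Step 2), every arrow $\alpha$ with $\alpha A$ a non-trivial indecomposable G-projective --- under the G-condition, exactly the arrows lying on gentle relation cycles --- is \emph{deleted} from the quiver and replaced by the length-two path $\alpha^{-}\alpha^{+}$ through the new vertex $\frakv(\alpha A)$. A band of $(\Q,\I)$ passing through such an arrow therefore does not literally live in $(\Q^{\CMA},\I^{\CMA})$. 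The repair is the substitution $b\mapsto b^{*}$, replacing each deleted arrow $\alpha$ of $b$ by $\alpha^{-}\alpha^{+}$, after which one must check that $b^{*}$ is still a band: the only new relations it could hit are the paths $\alpha^{+}\beta^{-}$ for consecutive arrows $\alpha,\beta$ on the same gentle relation cycle, and these are avoided precisely because $b$, being a string, already avoided the relation $\alpha\beta\in\I$. This two-case analysis (bands meeting a relation cycle versus bands disjoint from all of them) is exactly how the paper argues this direction; without it your argument only covers bands that stay off the gentle relation cycles.
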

\end{sloppypar}

Paralell to the representation type of finite-dimensional algebras, the derived representation type mainly consider the classification and disturbutation of objects in derived category, which was pioneered by Vossieck \cite{Vo01}. He defined the derived-discrete algebras and classified them, which contain gentle one-cycle algebras not satisfying the clock condition (namely, gentle algebras without homotopy bands) as one type. The derived representation type was also studied in other related works, see for example \cite{ZH16, Zh16a, Zh16b}.  As another application, the present paper observes the relation of derived representation type of a gentle algebra and its CM-Auslander algebra. 

This paper is organized as follows.
In Section \ref{sec:string and gentle}, some preliminaries on the string algebras and gentle algebras are recalled.
The section \ref{sec:CMA} explicitly constructs the CM-Auslander algebras of any string algebra, and describes the representation type of any string algebra by the CM-Auslander algebra.
Moreover, a partial answer of Question \ref{Q1} in the case of string algebra is provided, and Question \ref{Q2} is negated (see Remark \ref{rmk:repr-type}).
As an application, the last section shows some results of gentle algebras,
which are originally proved in \cite{CL2019} and \cite{B2011}.

\section{String algebras and gentle algebras} \label{sec:string and gentle}

Let $(\Q, \I)$ be a bounded quiver. A {\defines circuit} $\C$ (of length $n$)
is a path of length $n$ on the underlying graph of $\Q$ such that its starting point and ending point coincide.
A circuit $\C$ is a {\defines cycle} of $(\Q, \I)$ if any two arrows on $\C$ are unequal.
{A cycle  $\C=c_1\cdots c_n$ is an {\defines oriented cycle} if any $c_i$ and $c_{i+1}$ are arrows satisfying $t(c_{i})=s(c_{i+1})$ ($1\le i\le n-1$).
Furthermore, an oriented cycle $\C = c_1\cdots c_n$ is called a {\defines relation cycle} if there are generators $r_0, r_1, \cdots, r_{t-1}$ of $\I$ such that
\begin{itemize}
  \item[(1)] $r_0$, $r_1$, $\cdots$, $r_{t-1}$ are paths on $\C$;
  \item[(2)] for any $0\le i\le t-1$, the paths $r_{\overline{i}}$ and $r_{\overline{i+1}}$ overlap, i.e., $t(r_{\overline{i}})$ lies between $s(r_{\overline{i+1}})$ and $t(r_{\overline{i+1}})$,
  and $s(r_{\overline{i+1}})$ lies between $s(r_{\overline{i}})$ and $t(r_{\overline{i}})$ where $\overline{i}$ is the representative of $i$ modulo $t$.
\end{itemize} }
\noindent A {\defines cycle {\rm(}resp. circuit{\rm)} without relation}
is a cycle $\C$ (resp. circuit) such that all paths on $\C$ are not in $\I$.

For any two paths $p = a_1a_2\cdots a_l$ of length $l$ and $q = a_{i+1}a_{i+2}\cdots a_{i+m}$ of length $m$ ($1 \le i < l \le i+m$),
we denote by $p \sqcap q = a_{i+1}a_{i+2}\cdots a_l$ and $p \sqcup q = a_1a_2\cdots a_{i+m}$.

\subsection{String algebras}

A quiver $(\Q, \I)$ is said to be {\defines a string quiver} if
\begin{itemize}
  \item any vertex of $\Q$ is the starting point and ending point of at most two arrows.

  \item for each arrow $\alpha:x\to y$,
  there is at most one arrow $\beta$ whose starting point (resp. ending point) is $y$ (resp. $x$)
  such that $\alpha\beta\notin\I$ (resp. $\beta\alpha\notin\I$).

  \item $\I$ is admissible, and it is generated by paths of length $\ge 2$.
\end{itemize}
Moreover, a string quiver $(\Q, \I)$ is {\defines gentle} if:
\begin{itemize}
  \item for each arrow $\alpha: x\to y$,
  there is at most one arrow $\beta$ whose starting point (resp. ending point) is $y$ (resp. $x$)
  such that $\alpha\beta\in\I$ (resp. $\beta\alpha\in\I$).

  \item all generators of $\I$ are paths of length two.
\end{itemize}

\begin{definition}\rm \label{def:string}
A {\defines string {\rm(}resp.  gentle{\rm)} algebra} is a finitely dimensional $\kk$-algebra $A$
which is isomorphic to $\kk\Q/\I$, where $(\Q, \I)$ is a string {\rm(}resp.  gentle{\rm)} quiver.
\end{definition}

\begin{remark} \rm
Any gentle algebra is a string algebra. Moreover, all string algebras are finitely dimensional in Definition \ref{def:string},
thus the quiver of string algebra is {\defines finite}, i.e., $\sharp\Q_0<\infty$ and $\sharp\Q_1<\infty$.
\end{remark}

For any arrow $a\in \Q_1$, we denote by $a^{-1}$ the {\defines formal inverse} of $a$ with $s(a^{-1})=t(a)$ and $t(a^{-1})=s(a)$,
and let $\Q_1^{-1}:=\{a^{-1}\mid a\in \Q_1\}$ be the set of all formal inverses of arrows. The formal inverse extend naturally to any path, i.e.,
for any path $\wp=a_1a_2\cdots a_l$ $(\notin\I)$, the {\defines formal inverse path} $\wp^{-1} = a_{l}^{-1}a_{l-1}^{-1}\cdots a_1^{-1}$.
Notice that for any trivial path $e_v$ corresponding to $v\in \Q_0$, we define $e_v^{-1} = e_v$.

\begin{definition}\rm
A {\defines string} is a sequence $s=(\wp_1, \wp_2, \ldots, \wp_n)$ such that:
\begin{itemize}
  \item[(1)]
    for any $1\le i\le n$, one of $\wp_i$ and $\wp_i^{-1}$ is a path of $(\Q, \I)$;

  \item[(2)]
    if $\wp_i$ is a path (resp. formal inverse of a path), then $\wp_{i+1}$ is a formal inverse of a path (resp. path);

  \item[(3)]
    $t(\wp_{i})=s(\wp_{i+1})$ holds for all $1\le i\le n-1$.
\end{itemize}
Furthermore, A {\defines band} $b=(\wp_1, \wp_2, \ldots, \wp_n)$ is a string such that
\begin{itemize}
  \item[(4)] $t(\wp_n)=s(\wp_1)$;
  \item[(5)] $b$ is not a non-trivial power of some string, i.e.,
    there is no string $s$ such that $b=s^m$ for some $m\ge 2$;
  \item[(6)] $b^2$ is a string.
\end{itemize}
\end{definition}

\begin{remark}\rm
A vertex $v\in \Q_0$ on the string $s=(\wp_1, \wp_2, \cdots, \wp_n)$
is called a {\defines self-intersection} of $s$, if there are two integers $1\le i\ne j\le n$
such that $v$ is both a vertex on $\wp_i$ and a vertex on $\wp_j$,
i.e., $v$ is a vertex where $s$ crosses itself at least two times.
\end{remark}


Two strings $s=(\wp_1, \ldots, \wp_n)$ and $s'=(\wp_1', \ldots, \wp'_n)$
are called to be {\defines equivalent}, denoted by $s\simeq s'$, if $s'=s^{-1}$
(i.e., $\wp_{n-i+1}'^{-1} = \wp_i$ for all $1\le i\le n$).
Two bands $b=(\wp_1, \ldots, \wp_n)$ and $b'=(\wp_1', \ldots, \wp'_n)$
are called to be {\defines equivalent}, denoted by $b\simeq b'$, if one of following holds:
\begin{itemize}
  \item
    there is an integer $1\le j\le n$ such that $\wp_{\overline{i}}' = \wp_{\overline{i+j}}$ holds for all $1\le i\le n$;

  \item
    there is an integer $1\le j\le n$ such that $\wp_{\overline{i}}'^{-1} = \wp_{\overline{(n+1-i)+j}}$ holds for all $1\le i\le n$,
\end{itemize}
where $\overline{x}$ is the reprsentative element of $x$ modulo $n$ in the set $\{1, 2, \cdots, n\}$.
We denote by $\str(A)$ (resp. $\band(A)$) the set of all equivalent classes of strings (resp. bands).

\subsection{String modules and band modules}

\begin{definition}[String module and band module] \rm \cite[Section 3]{BR1987}
Let $s=a_1\cdots a_n$ be a string of length $n$.
A {\defines string module} $M$ associated with $s$ is an indecomposable module defined as follows
\begin{itemize}
  \item 
    for each vertex $v$ of $s$,
    $\dim_{\kk}M e_v$ is the number of the times of $v$ traversed by $s$ or zero if otherwise; and

  \item the action of an arrow $\alpha\in\Q_1$ on $M$ is the identity morphism if $\alpha$ is an arrow of $s$ or zero if otherwise.
\end{itemize}

Let $b=a_1\cdots a_n$ be a band of length $n$.
A {\defines band module} $B$ corresponding to the pair $(b, \varphi)$ of $b$ and $\varphi\in \mathrm{Aut}(\kk^n)$ is an indecomposable module obtained by following:
\begin{itemize}
  \item $\varphi$ is an indecomposable $\kk$-linear automorphism of $\kk^n$ with $n > 0$
    (that is, $\varphi \in \mathrm{Aut}(\kk^n)$ is a Jordan block because $\kk$ is algebraically closed);

  \item  
    for each vertex $v$ of $s$, $M e_v$ is $\kk$-linear isomorphic to $\kk^{mn}$,
    where $m$ is the number of t he times of $v$ traversed by $s$ or zero if otherwise; and

  \item the action of an arrow $\alpha \in \Q_1$ on $B$ is the identity morphism if $\alpha = a_i$ for $1\le i\le n-1$ or $\varphi$ if $\alpha = a_n$ (thus $B\alpha = 0$ if $\alpha$ is not an arrow of $b$).
\end{itemize}
\end{definition}

The following theorem shows that any indecomposable module over string algebra is either a string module or a band module.

\begin{theorem} \label{thm:indmod}
Let $A$ be a string algebra. Then there is a bijection
\[ \M: \str(A) \cup (\band(A)\times\mathscr{J}) \to \ind(\modcat(A)), \]
where $\mathscr{J}$ is the set of all Jordan block with non-zero eigenvalue.
\end{theorem}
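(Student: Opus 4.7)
The plan is to establish the bijection $\M$ in three stages: well-definedness together with indecomposability of the image, then injectivity on equivalence classes, and finally surjectivity onto $\ind(\modcat A)$.

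First, I would check that $\M$ is well-defined. For a string $s$, the module $\M(s)$ has a canonical basis indexed by the vertices visited by $s$, with the action of arrows determined by adjacency along $s$; the equivalence $s\simeq s^{-1}$ corresponds to a relabelling of this basis, so $\M(s)\cong \M(s^{-1})$. The analogous verification for bands uses cyclic shifts and inverse-cyclic shifts, together with the observation that cycling the band $b=a_1\cdots a_n$ by one position conjugates $\varphi$ by a permutation matrix, which preserves the similarity class of a Jordan block. Indecomposability of $\M(s)$ is obtained by showing that $\End_A(\M(s))$ is local; any endomorphism decomposes into ``graph maps'' associated to common factor substrings, and the string-algebra axioms force the only idempotent endomorphisms to be $0$ and $1$. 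For band modules, the endomorphism ring turns out to be isomorphic to $\kk[x]/(x^m)$ where $m$ is the Jordan block size, so indecomposability follows from $\varphi$ being a single Jordan block, and the \emph{nonzero} eigenvalue is needed so that the cyclic action around $b$ is invertible on the fibres.

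For injectivity I would invoke the classification of homomorphisms between string and band modules via graph maps. If two strings are inequivalent, then either their dimension vectors differ or the pattern of arrows they visit cannot be matched by any permutation of basis, so no isomorphism can exist between the corresponding modules. For bands, the equivalence class of $b$ is recovered from the top/socle filtration together with the action of the cyclic monodromy, and the Jordan block $\varphi$ is then recovered as that monodromy acting on the common $\kk^n$-fibre over any chosen vertex; distinct equivalence classes of pairs $(b,\varphi)$ therefore yield non-isomorphic modules.

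The main obstacle is surjectivity, and I would handle it by the \emph{functorial filtration} method originally due to Gel'fand--Ponomarev and adapted to string algebras by Butler and Ringel. The idea is to attach to each ``word'' $w$ in $\Q_1\cup \Q_1^{-1}$ a pair of subfunctors of the forgetful functor whose subquotient is one-dimensional. Given an indecomposable $M\in\ind(\modcat A)$, the string-algebra axioms (at most two arrows in or out of each vertex, and at most one non-relation continuation on each side of any arrow) force the resulting filtrations of $M$ to be totally ordered and patch together into a single basis of $M$ indexed by the vertices of some word $w$. Finite-dimensionality of $A$ rules out genuinely infinite strings, so $w$ is either a finite string, giving $M\cong \M(w)$, or $w$ is eventually periodic, in which case indecomposability forces $M$ to be a band module with $b$ the minimal period and $\varphi$ the indecomposable automorphism recording how $M$ ``closes up''. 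The delicate point is verifying that the candidate filtration actually splits off a direct summand of string or band type, which rests on a careful case analysis exploiting the string-algebra conditions.
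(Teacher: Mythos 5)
The paper offers no proof of this statement: Theorem \ref{thm:indmod} is quoted as the classical classification of indecomposables over a string algebra due to Butler and Ringel \cite{BR1987} (going back to Gel'fand--Ponomarev for the four-subspace/dihedral situations), and the surrounding text only recalls the definitions of string and band modules from that source. So there is no in-paper argument to compare yours against; what you have written is, in outline, precisely the standard proof from the literature that the paper implicitly relies on. Your three-stage plan (well-definedness and indecomposability via locality of endomorphism rings, injectivity, surjectivity via functorial filtrations) is the right strategy, and the remarks about cyclic shifts conjugating $\varphi$ by a permutation matrix and about the non-zero eigenvalue making the monodromy invertible are correct.

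Two caveats on the level of rigor. First, your injectivity argument for strings, as stated (``either their dimension vectors differ or the pattern of arrows they visit cannot be matched''), is not a proof: inequivalent strings routinely give modules with identical dimension vectors, and matching ``patterns of arrows'' is exactly what must be made precise. The correct repair is the one you already gesture at --- either the graph-map description of $\Hom$ between string/band modules, or, more economically, the observation that the functorial filtration used for surjectivity recovers the word $w$ (up to inversion, resp.\ rotation and inversion) functorially from the module, so injectivity comes for free from the same machinery. Second, the surjectivity step is where essentially all the work lives; your sketch names the method but the ``delicate point'' you flag (that the filtration splits off a string or band summand) is the entire content of Butler--Ringel's Section 3 and cannot be waved through. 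As a blind reconstruction of a theorem the paper itself does not prove, this is an acceptable and correctly oriented outline rather than a complete argument.
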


\section{The Cohen-Macaulay Auslander algebras of string algebras} \label{sec:CMA}

The present section is devoted to describing non-trivial indecomposable Gorenstein-projective modules (= NTIG-projective modules) over string algebras.

\subsection{Perfect pairs and perfect paths}

Obviously, all projective modules are G-projective.
We say a G-projective module is {\defines trivial} if it is projective.
In \cite{C2018}, all NTIG-projective modules over monomial algebra are characterized in terms of perfect paths. Here, monomial algebras are those algebras admitting a presentation $\kk\Q/\I$ such that $\I$ is generated by paths of length $\ge 2$.
In this subsection, we review the relevant concepts and results in \cite{C2018}.

\begin{definition}[{Perfect pairs and perfect paths}] \rm
Let $(\Q, \I)$ be a bounded quiver of some monomial algebra.
\begin{itemize}
\item[(1)] \cite[Definition 3.3]{C2018}
A {\defines perfect pair} on $(\Q, \I)$ of monomial algebra $\Lambda$ is a pair of two non-trivial paths $(\wp_1,\wp_2)$ such that the following conditions hold.
\begin{itemize}
  \item[(i)] $\wp_1$ and $\wp_2$ are non-trivial paths such that $t(\wp_1)=s(\wp_2)$ and $\wp_1\wp_2=0$.

  \item[(ii)] If there exists a non-trivial path $\wp$ satisfying $t(\wp_1)=s(\wp)$ (resp. $t(\wp)=s(\wp_2)$) such that $\wp_1\wp=0$ (resp. $\wp\wp_2=0$),
    then $\wp_2$ (resp. $\wp_1$) is a subpath of $\wp$.
\end{itemize}

\item[(2)] \cite[Definition 3.7]{C2018}
A {\defines perfect path sequence} (=PPS) is a sequence of paths of the form $(p=\wp_1, \wp_2, \cdots, \wp_t=p)$ such that any pair $(\wp_i, \wp_{i+1})$ is a perfect pair for $i=1, 2, \cdots, t$. Any path in a perfect path sequence is called to be a {\defines perfect path}.
\end{itemize}
\end{definition}

\begin{remark} \rm
If $(\wp_1, \wp_2, \cdots, \wp_{t-1}, \wp_1)$ is a PPS, then so is $(\wp_2, \cdots, \wp_{t-1}, \wp_1, \wp_2)$, and all $\wp_i$ are paths on the same relation cycle $\C$.
\end{remark}

To describe all NTIG-projective modules over a string algebra $A=\kk\Q/\I$,
we need the following lemma, which depicts the oriented cycle of the bounded quiver $(\Q, \I)$ of $A$ contains at least one PPS.

\begin{lemma} \label{lemm:per.path}
Let $(p=\wp_0, \wp_1, \cdots, \wp_{t-1}=p)$ be a PPS on the bounded quiver of string algebra $A=\kk\Q/\I$,
where $\wp_{\overline{i}} = \alpha_{\overline{i},1} \cdots \alpha_{\overline{i}, l_{\overline{i}}}$ and $\overline{i}$ equals to $i$ modulo $t$.
Then the following statements hold.
\begin{itemize}
  \item[\rm(1)] If there exists arrow $\alpha$ with starting point {\rm(}resp.  ending point{\rm)} is $t(\wp_{\overline{i}}) = s(\wp_{\overline{i+1}}) \in \Q_0$,
      then $\wp_{\overline{i}} \alpha \ne 0$ {\rm(}resp.  $\alpha \wp_{\overline{i+1}} \ne 0${\rm)}.
      In this case, we have $\ell(\wp_{\overline{i}}) = 1 = \ell(\wp_{\overline{i+1}})$.
  \item[\rm(2)] All compositions $\wp_{\overline{i}}\wp_{\overline{i+1}}$ must be generators of $\I$ {\rm(}see \Pic \ref{fig:per.path}{\rm)};
\end{itemize}
\end{lemma}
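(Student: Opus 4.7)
The plan is to establish (2) first and then use its minimality conclusion to close the length assertions in (1); the incoming case of (1) will be handled by a symmetric dual argument, so I focus on the outgoing case throughout.

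For (2), I will show that $r_i := \wp_{\overline{i}}\wp_{\overline{i+1}}$ is a minimal generator of $\I$, i.e., no proper subpath $q \subsetneq r_i$ lies in $\I$. A subpath contained entirely in $\wp_{\overline{i}}$ or in $\wp_{\overline{i+1}}$ is ruled out immediately, since these are non-zero paths. Otherwise $q$ straddles the junction vertex $v := t(\wp_{\overline{i}}) = s(\wp_{\overline{i+1}})$ and splits as $q = q_1 q_2$ with $q_1$ a suffix of $\wp_{\overline{i}}$ and $q_2$ a prefix of $\wp_{\overline{i+1}}$, at least one inclusion being proper. If $q_2 \subsetneq \wp_{\overline{i+1}}$, then left-multiplying $q \in \I$ by the complementary prefix of $\wp_{\overline{i}}$ exhibits $\wp_{\overline{i}} q_2 \in \I$; perfect-pair condition (ii) then forces $\wp_{\overline{i+1}}$ to be a prefix of $q_2$, contradicting the proper containment. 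The symmetric subcase $q_1 \subsetneq \wp_{\overline{i}}$ is disposed of by the dual half of (ii).

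For (1), outgoing case: let $a$ be the last arrow of $\wp_{\overline{i}}$ and $\beta$ the first arrow of $\wp_{\overline{i+1}}$. Reading the hypothesis as the existence of an outgoing arrow $\alpha$ at $v$ distinct from $\beta$ (the stated conclusion $\wp_{\overline{i}}\alpha \ne 0$ already fails if $\alpha = \beta$ and $\wp_{\overline{i+1}} = \beta$), I first observe that if $\wp_{\overline{i}}\alpha = 0$, then (ii) makes $\wp_{\overline{i+1}}$ a prefix of the single arrow $\alpha$, forcing $\wp_{\overline{i+1}} = \alpha$; but $\wp_{\overline{i+1}}$ begins with $\beta \ne \alpha$, a contradiction. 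Hence $\wp_{\overline{i}}\alpha \ne 0$, and in particular $a\alpha \notin \I$. The string-algebra axiom at $a$ (at most one outgoing continuation with nonzero composition) then forces $a\beta \in \I$, so $\wp_{\overline{i}}\beta \in \I$, and (ii) gives $\wp_{\overline{i+1}} = \beta$, i.e., $\ell(\wp_{\overline{i+1}}) = 1$. Finally, $a\beta$ is a length-$2$ path in $\I$, and since admissibility precludes arrows in $\I$, $a\beta$ is itself a minimal generator. By (2), so is $r_i = \wp_{\overline{i}}\beta$; if $\ell(\wp_{\overline{i}}) \ge 2$ then $a\beta$ would be a proper suffix of $r_i$ lying in $\I$, contradicting the minimality of $r_i$. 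Therefore $\ell(\wp_{\overline{i}}) = 1$.

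The delicate step is expected to be the case analysis in (2), especially producing the right witness path $\wp_{\overline{i}} q_2$ (or dually $q_1 \wp_{\overline{i+1}}$) to feed into the one-sided perfect-pair condition; every subsequent step rests on this minimality. Once (2) is secure, the length-$1$ conclusions in (1) collapse into a simple minimality comparison between the two generators $r_i$ and $a\beta$ inside the monomial ideal $\I$, and the incoming case follows by the obvious left-right symmetric argument.
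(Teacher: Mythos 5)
Your proof is correct and follows essentially the same route as the paper: part (2) is exactly the paper's argument (a proper subpath of $\wp_{\overline{i}}\wp_{\overline{i+1}}$ lying in $\I$ must straddle the junction vertex, and the one-sided perfect-pair condition (ii) applied to $\wp_{\overline{i}}q_2=0$ or $q_1\wp_{\overline{i+1}}=0$ forces a length contradiction), and part (1) uses the same string-algebra uniqueness axiom to conclude $a\beta\in\I$. The only cosmetic differences are that you prove (2) first and then derive $\ell(\wp_{\overline{i}})=1$ from the minimality established there, whereas the paper gets both length claims directly from the two halves of condition (ii); both are valid, and your explicit reading of the hypothesis in (1) as ``$\alpha$ distinct from the first arrow of $\wp_{\overline{i+1}}$'' is the correct interpretation that the paper leaves implicit.
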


\begin{figure}[H]
\begin{tikzpicture}
\shade[left color=orange!50, right color=orange][opacity=0.5][shift={(-3, 0)}] (1.5,0) ellipse (1.5cm and 0.25cm);
\draw[shift={(-3, 0)}][->] (0.2,0) node[left]{$\bullet$} -- (0.8,0) node[right]{$\bullet$};
\draw[shift={(-2, 0)}][->] (0.2,0) -- (0.8,0) node[right]{$\bullet$} [dash pattern=on 2pt off 2pt];
\draw[shift={(-1, 0)}][->] (0.2,0) -- (0.8,0) node[right]{$\bullet$};
\draw[shift={(-3, 0)}][orange] (1.5,-0.2) node[below]{$\wp_{\overline{i-1}}$};
\shade[left color=pink, right color=red][opacity=0.5] (1.5,0) ellipse (1.5cm and 0.25cm);
\draw[->] (0.2,0) -- (0.8,0) node[right]{$\bullet$};
\draw (0.5,0) node[above]{$\alpha_{\overline{i},1}$};
\draw[shift={( 1, 0)}][->] (0.2,0) -- (0.8,0) node[right]{$\bullet$} [dash pattern=on 2pt off 2pt];
\draw[shift={( 2, 0)}][->] (0.2,0) -- (0.8,0) node[right]{$\bullet$};
\draw[shift={( 2, 0)}] (0.5,0) node[above]{$\alpha_{\overline{i},l_{\overline{i}}}$};
\draw[red] (1.5,-0.2) node[below]{$\wp_{\overline{i}}$};
\shade[left color=blue, right color=blue!50][opacity=0.5][shift={( 3, 0)}] (1.5,0) ellipse (1.5cm and 0.25cm);
\draw[shift={( 3, 0)}][->] (0.2,0) -- (0.8,0) node[right]{$\bullet$};
\draw[shift={( 3, 0)}] (0.5,0) node[above]{$\alpha_{\overline{i+1},1}$};
\draw[shift={( 4, 0)}][->] (0.2,0) -- (0.8,0) node[right]{$\bullet$} [dash pattern=on 2pt off 2pt];
\draw[shift={( 5, 0)}][->] (0.2,0) -- (0.8,0) node[right]{$\bullet$};
\draw[shift={( 5, 0)}] (0.5,0) node[above]{$\alpha_{\overline{i+1},l_{\overline{i+1}}}$};
\draw[shift={( 3, 0)}][blue] (1.5,-0.2) node[below]{$\wp_{\overline{i+1}}$};
\shade[left color=cyan, right color=cyan!50][opacity=0.5][shift={( 6, 0)}] (1.5,0) ellipse (1.5cm and 0.25cm);
\draw[shift={( 6, 0)}][->] (0.2,0) -- (0.8,0) node[right]{$\bullet$};
\draw[shift={( 7, 0)}][->] (0.2,0) -- (0.8,0) node[right]{$\bullet$} [dash pattern=on 2pt off 2pt];
\draw[shift={( 8, 0)}][->] (0.2,0) -- (0.8,0) node[right]{$\bullet$};
\draw[shift={( 6, 0)}][cyan] (1.5,-0.2) node[below]{$\wp_{\overline{i+2}}$};
\shade[left color=orange, right color=red][dotted][line width=2pt][opacity=0.5][shift={(-3,0)}]
  (0,0.2) to[out=90,in=90] (6,0.2) -- (5.9,0.2) to[out=90,in=90] (0.1,0.2) -- (0,0.2);
\draw[shift={(-3,0)}] (3,2)
  node[above]{${\color{orange}\wp_{\overline{i-1}}}{\color{red}\wp_{\overline{i}}} \in \I$};
\shade[left color=red, right color=blue][dotted][line width=2pt][opacity=0.5]
  (0,0.2) to[out=90,in=90] (6,0.2) -- (5.9,0.2) to[out=90,in=90] (0.1,0.2) -- (0,0.2);
\draw (3,2) node[above]{${\color{red}\wp_{\overline{i}}}{\color{blue}\wp_{\overline{i+1}}} \in \I$};
\shade[left color=blue, right color=cyan][dotted][line width=2pt][opacity=0.5][shift={(3,0)}]
  (0,0.2) to[out=90,in=90] (6,0.2) -- (5.9,0.2) to[out=90,in=90] (0.1,0.2) -- (0,0.2);
\draw[shift={(3,0)}] (3,2)
  node[above]{${\color{blue}\wp_{\overline{i+1}}}{\color{cyan}\wp_{\overline{i+2}}} \in \I$};
\end{tikzpicture}
\caption{${\color{orange}\wp_{\overline{i-1}}}{\color{red}\wp_{\overline{i}}}$,
${\color{red}\wp_{\overline{i}}}{\color{blue}\wp_{\overline{i+1}}}$ and
${\color{blue}\wp_{\overline{i+1}}}{\color{cyan}\wp_{\overline{i+2}}}$ are generators of $\I$ }
\label{fig:per.path}
\end{figure}

\begin{proof}
(1) Assume that there is an arrow $\alpha$ with starting point {\rm(}resp.  ending point{\rm)} $s(\wp_{\overline{i}})$
such that $\wp_{\overline{i}} \alpha = 0$ {\rm(}resp.  $\alpha \wp_{\overline{i+1}} = 0${\rm)}.
Then $(\wp_{\overline{i}}, \wp_{\overline{i+1}})$ is not a perfect pair,
since $\alpha$ is not a subpath of $\wp_{\overline{i+1}}$ (resp.  $\wp_{\overline{i}}$).
We obtain a contradiction. Thus, we have
\begin{center}
$\wp_{\overline{i}} \alpha \ne 0$ {\rm(}resp.  $\alpha \wp_{\overline{i+1}} \ne 0${\rm)}.
\end{center}
Furthermore, by the definition of string algebra, we have $\ell(\wp_{\overline{i}}) = 1 = \ell(\wp_{\overline{i+1}})$.

(2) Assume there exists an integer $i$ such that $\wp_{\overline{i}}\wp_{\overline{i+1}}$ is not a generator of $\I$.
Since $\wp_{\overline{i}}\wp_{\overline{i+1}}=0$, it follows that
$\wp_{\overline{i}}\wp_{\overline{i+1}}$ has a non-trivial subpath $\wp$ which is a generator of $\I$. Then
\begin{itemize}
  \item $\wp_{\overline{i}}' = \wp_{\overline{i}} \sqcap \wp$ and $ \wp_{\overline{i+1}}' = \wp \sqcap \wp_{\overline{i+1}}$ are non-trivial path on $(\Q, \I)$;

  \item $\wp_{\overline{i}}' \sqcup \wp_{\overline{i+1}}' = \wp$ (see \Pic \ref{fig:non-per.path}).
\end{itemize}
\begin{figure}[H]
\begin{tikzpicture}
\shade[left color=orange!50, right color=orange][opacity=0.5][shift={(-3, 0)}] (1.5,0) ellipse (1.5cm and 0.25cm);
\draw[shift={(-3, 0)}][->] (0.2,0) node[left]{$\bullet$} -- (0.8,0) node[right]{$\bullet$};
\draw[shift={(-2, 0)}][->] (0.2,0) -- (0.8,0) node[right]{$\bullet$} [dash pattern=on 2pt off 2pt];
\draw[shift={(-1, 0)}][->] (0.2,0) -- (0.8,0) node[right]{$\bullet$};
\draw[shift={(-3, 0)}][orange] (1.5,-0.2) node[below]{$\wp_{\overline{i-1}}$};
\shade[left color=pink, right color=red][opacity=0.5] (1.5,0) ellipse (1.5cm and 0.25cm);
\draw[->] (0.2,0) -- (0.8,0) node[right]{$\bullet$};
\draw[shift={( 1, 0)}][->] (0.2,0) -- (0.8,0) node[right]{$\bullet$} [dash pattern=on 2pt off 2pt];
\draw[shift={( 2, 0)}][->] (0.2,0) -- (0.8,0) node[right]{$\bullet$};
\draw[red] (1.25,-0.2) node[below]{$\wp_{\overline{i}}$};
\shade[left color=blue, right color=blue!50][opacity=0.5][shift={( 3, 0)}] (1.5,0) ellipse (1.5cm and 0.25cm);
\draw[shift={( 3, 0)}][->] (0.2,0) -- (0.8,0) node[right]{$\bullet$};
\draw[shift={( 4, 0)}][->] (0.2,0) -- (0.8,0) node[right]{$\bullet$} [dash pattern=on 2pt off 2pt];
\draw[shift={( 5, 0)}][->] (0.2,0) -- (0.8,0) node[right]{$\bullet$};
\draw[shift={(3.5,0)}][blue] (1.5,-0.2) node[below]{$\wp_{\overline{i+1}}$};
\shade[left color=cyan, right color=cyan!50][opacity=0.5][shift={( 6, 0)}] (1.5,0) ellipse (1.5cm and 0.25cm);
\draw[shift={( 6, 0)}][->] (0.2,0) -- (0.8,0) node[right]{$\bullet$};
\draw[shift={( 7, 0)}][->] (0.2,0) -- (0.8,0) node[right]{$\bullet$} [dash pattern=on 2pt off 2pt];
\draw[shift={( 8, 0)}][->] (0.2,0) -- (0.8,0) node[right]{$\bullet$};
\draw[shift={( 6, 0)}][cyan] (1.5,-0.2) node[below]{$\wp_{\overline{i+2}}$};
\shade[left color=orange, right color=red][dotted][line width=2pt][opacity=0.5][shift={(-3,0)}]
  (0,0.2) to[out=90,in=90] (6,0.2) -- (5.9,0.2) to[out=90,in=90] (0.1,0.2) -- (0,0.2);
\draw[shift={(-3,0)}] (3,2)
  node[above]{${\color{orange}\wp_{\overline{i-1}}}{\color{red}\wp_{\overline{i}}} = 0$};
\shade[left color=red, right color=blue][dotted][line width=2pt][opacity=0.5]
  (0,0.2) to[out=90,in=90] (6,0.2) -- (5.9,0.2) to[out=90,in=90] (0.1,0.2) -- (0,0.2);
\draw (3,2) node[above]{${\color{red}\wp_{\overline{i}}}{\color{blue}\wp_{\overline{i+1}}} = 0$};
\shade[left color=blue, right color=cyan][dotted][line width=2pt][opacity=0.5][shift={(3,0)}]
  (0,0.2) to[out=90,in=90] (6,0.2) -- (5.9,0.2) to[out=90,in=90] (0.1,0.2) -- (0,0.2);
\draw[shift={(3,0)}] (3,2)
  node[above]{${\color{blue}\wp_{\overline{i+1}}}{\color{cyan}\wp_{\overline{i+2}}} = 0$};
\draw[purple][line width=1pt][opacity=0.5] (3,0) ellipse (1.5cm and 1.5cm);
\draw[purple][->] (3,-1.6) -- (3,-2) node[below]{$\wp$ is a generator of $\I$};
\shade[left color=green!50, right color=green][opacity=0.5] (2.25,0) ellipse (0.75cm and 0.75cm);
\draw[green] (2.25,-1) node{$\wp_{\overline{i}}'$};
\shade[left color=green!50, right color=green][opacity=0.5] (3.75,0) ellipse (0.75cm and 0.75cm);
\draw[green] (3.75,-1) node{$\wp_{\overline{i+1}}'$};
\end{tikzpicture}
\caption{$\wp_{\overline{i}}\wp_{\overline{i+1}}$ has a non-trivial subpath $\wp$ which is a generator of $\I$ }
\label{fig:non-per.path}
\end{figure}
Thus,
\[\wp_{\overline{i}}' \wp_{\overline{i+1}} = 0 \ \text{and}\
\wp_{\overline{i}} \wp_{\overline{i+1}}' = 0, \]
which yields a contradiction since that $\wp_{\overline{i}}'$ and $\wp_{\overline{i+1}}'$ are non-trivial subpath of $\wp_{\overline{i}}$ and $\wp_{\overline{i+1}}$ respectively.
\end{proof}

Let $(p=\wp_1, \wp_2, \cdots, \wp_t=p)$ be a PPS on a relation cycle $\C$ of a string algebra $A=\kk\Q/\I$.
By Lemma \ref{lemm:per.path}, all paths of the form $\wp_{\overline{i}}\wp_{\overline{i+1}}$ are generators of $\I$ which are paths on $\C$.
Thus, we obtain a sequence of zero relations $(\wp_1\wp_2, \wp_2\wp_3, \cdots, \wp_{t-1}\wp_t, \wp_t\wp_1)$
and call it a {\defines perfect relation sequence} (=PRS) corresponding to the above PPS.
Furthermore, the following proposition holds directly.

\begin{proposition} \label{prop:perpaths}
A sequence of generators $(r_0, r_1, \cdots, r_{t-1})$ of $\I$ is a PRS if and only if the following statements hold.
\begin{itemize}
  \item $r_{\overline{i}} = (r_{\overline{i-1}} \sqcap r_{\overline{i}}) \cdot (r_{\overline{i}} \sqcap r_{\overline{i+1}})$.
  \item For any $i\in \mathbb{N}$, if there is an arrow $\alpha$ {\rm(}resp.  $\beta${\rm)}
    such that $t(\alpha)=s(\wp_{\overline{i}})$ {\rm(}resp.  $s(\beta)=t(\wp_{\overline{i-1}})${\rm)},
    then $\alpha\wp_{\overline{i}}\ne 0$ {\rm(}resp.  $\beta\wp_{\overline{i-1}} \ne 0${\rm)},
    where $\wp_{\overline{i}} = r_{\overline{i}}\sqcap r_{\overline{i+1}}$ for all $i$.
\end{itemize}
\end{proposition}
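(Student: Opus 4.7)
The proposition is an equivalence, so the proof splits into two implications, each of which should follow fairly mechanically from the definitions together with Lemma~\ref{lemm:per.path}.

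For the forward direction, suppose $(r_0, \ldots, r_{t-1})$ is the PRS associated with a PPS $(\wp_1 = p, \wp_2, \ldots, \wp_t = p)$ via $r_{\bar{i}} = \wp_{\bar{i}}\wp_{\overline{i+1}}$. Lemma~\ref{lemm:per.path}(2) immediately gives that each $r_{\bar{i}}$ is a generator of $\I$. The fact that two consecutive perfect paths overlap inside the cycle identifies $\wp_{\bar{i}} = r_{\overline{i-1}}\sqcap r_{\bar{i}}$ and $\wp_{\overline{i+1}} = r_{\bar{i}}\sqcap r_{\overline{i+1}}$, so the factorization $r_{\bar{i}} = \wp_{\bar{i}}\wp_{\overline{i+1}} = (r_{\overline{i-1}}\sqcap r_{\bar{i}})(r_{\bar{i}}\sqcap r_{\overline{i+1}})$ is exactly condition~(1). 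Condition~(2) is then a direct reformulation of Lemma~\ref{lemm:per.path}(1).

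For the converse, given generators $(r_0, \ldots, r_{t-1})$ of $\I$ satisfying (1) and (2), I would set $\wp_{\bar{i}} := r_{\bar{i}}\sqcap r_{\overline{i+1}}$. Condition~(1) then writes $r_{\bar{i}} = \wp_{\overline{i-1}}\wp_{\bar{i}}$, so $\wp_{\overline{i-1}}\wp_{\bar{i}} \in \I$ and hence vanishes in the algebra. The remaining task is to verify that each $(\wp_{\overline{i-1}}, \wp_{\bar{i}})$ is a \emph{perfect pair}: any non-trivial path $\wp$ with $s(\wp) = t(\wp_{\overline{i-1}})$ and $\wp_{\overline{i-1}}\wp = 0$ must contain $\wp_{\bar{i}}$ as a subpath (and dually on the pre-composition side). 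The cyclic chain of pairs $(\wp_{\overline{i-1}}, \wp_{\bar{i}})$ then assembles into a PPS whose PRS is exactly the given $(r_0, \ldots, r_{t-1})$.

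I expect this maximality clause to be the main obstacle. My plan is to compare $\wp$ and $\wp_{\bar{i}}$ arrow by arrow starting at $t(\wp_{\overline{i-1}})$. If the first arrows of $\wp$ and $\wp_{\bar{i}}$ coincide, iterate; minimality of the generator $r_{\bar{i}} = \wp_{\overline{i-1}}\wp_{\bar{i}}$ ensures that no strictly shorter prefix of $\wp_{\overline{i-1}}\wp$ lies in $\I$, so $\wp$ cannot terminate strictly inside $\wp_{\bar{i}}$. If at some vertex the arrows diverge, condition~(2) provides that the alternative outgoing arrow $\alpha$ at that vertex satisfies $(\text{relevant prefix})\cdot\alpha \ne 0$, and the string-algebra axiom that for any arrow at most one successor composes non-trivially then propagates this non-vanishing along the tail of $\wp$, contradicting $\wp_{\overline{i-1}}\wp = 0$. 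The dual argument at the other end of $\wp_{\overline{i-1}}$ is entirely symmetric, and together they yield the perfect-pair property.
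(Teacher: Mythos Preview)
The paper does not actually prove this proposition: it simply remarks, just before the statement, that it ``holds directly'' from Lemma~\ref{lemm:per.path}. Your two-direction plan is therefore already more detailed than anything the paper offers, and the overall strategy (forward direction via Lemma~\ref{lemm:per.path}, converse by setting $\wp_{\bar i}:=r_{\bar i}\sqcap r_{\overline{i+1}}$ and verifying the perfect-pair axioms) is exactly the natural one.

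Two technical remarks on your converse argument. First, condition~(2) in the proposition speaks only about arrows at the single junction vertex $s(\wp_{\bar i})=t(\wp_{\overline{i-1}})$, so you cannot literally invoke it ``at that vertex'' when $\wp$ and $\wp_{\bar i}$ diverge at an interior position of $\wp_{\bar i}$. At an interior divergence (say $\wp$ agrees with $\wp_{\bar i}=b_1\cdots b_l$ through $b_j$ with $j\ge 1$ and then branches to an arrow $c\ne b_{j+1}$) the argument is pure minimality of the generator $r_{\bar i}$ together with the string axiom: $b_jb_{j+1}$ is a proper subpath of the generator $r_{\bar i}$ and hence nonzero, so $b_jc=0$, contradicting $\wp\ne 0$. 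Condition~(2) is only needed to handle divergence at the very first arrow.

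Second, your ``propagation'' clause (the string-algebra axiom ``propagates this non-vanishing along the tail of $\wp$'') is not valid as stated: in a string algebra with relations of length $\ge 3$, nonvanishing of all length-two subwords does not force nonvanishing of the whole path. The cleaner way to finish at $j=0$ is to note that since $\wp_{\overline{i-1}}\ne 0$ and $\wp\ne 0$, any generator of $\I$ witnessing $\wp_{\overline{i-1}}\wp=0$ must straddle the junction and hence contain $a_kc$, where $a_k$ is the last arrow of $\wp_{\overline{i-1}}$ and $c$ the first arrow of $\wp$; then argue with the string axiom on $a_k$ and condition~(2) to force $c=b_1$. With these adjustments the plan goes through.
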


\begin{construction}\rm \label{const:perpaths}
By Proposition \ref{prop:perpaths}, we can use PRSs to find all PPSs.
\begin{itemize}
  \item[Step] 1.
  Find a relation cycle $\C$ of $(\Q, \I)$.

  \item[Step] 2.
  If there is a PRS $(r_0, r_1, \cdots, r_{t-1})$ on $\C$,
  then $(p=\wp_1, \wp_2, \cdots, \wp_n=p)$ is a PPS by Lemma \ref{lemm:per.path},
  where $\wp_{\overline{i}}$ is the path $r_{\overline{i}}\sqcap r_{\overline{i+1}}$ on $\C$ ($0\le i\le t-1$).
  Otherwise, $\C$ not provides PPS.
\end{itemize}
\end{construction}

\begin{example} \rm \label{ex:1}
Let $A=\kk\Q/\I$ be a string algebra with $\Q=$
\begin{center}
\begin{tikzpicture}[scale=0.5]
\draw[->][rotate=  0+10] (2,0) arc (0:25:2);
\draw[->][rotate= 45+10] (2,0) arc (0:25:2);
\draw[->][rotate= 90+10] (2,0) arc (0:25:2);
\draw[->][rotate=135+10] (2,0) arc (0:25:2);
\draw[->][rotate=180+10] (2,0) arc (0:25:2);
\draw[->][rotate=225+10] (2,0) arc (0:25:2);
\draw[->][rotate=270+10] (2,0) arc (0:25:2);
\draw[->][rotate=315+10] (2,0) arc (0:25:2);
\draw[rotate=  0] (2,0) node{$1$}; \draw[rotate=  0] (2.25,0.85) node{$a$};
\draw[rotate= 45] (2,0) node{$2$}; \draw[rotate= 45] (2.25,0.85) node{$b$};
\draw[rotate= 90] (2,0) node{$3$}; \draw[rotate= 90] (2.25,0.85) node{$c$};
\draw[rotate=135] (2,0) node{$4$}; \draw[rotate=135] (2.25,0.85) node{$d$};
\draw[rotate=180] (2,0) node{$5$}; \draw[rotate=180] (2.25,0.85) node{$e$};
\draw[rotate=225] (2,0) node{$6$}; \draw[rotate=225] (2.25,0.85) node{$f$};
\draw[rotate=270] (2,0) node{$7$}; \draw[rotate=270] (2.25,0.85) node{$g$};
\draw[rotate=315] (2,0) node{$8$}; \draw[rotate=315] (2.25,0.85) node{$h$};
\draw[rotate= 45][->] (2.3,0) -- (4,0) node[right]{$9$};
\draw[rotate= 45] (2.5,0) node[right]{$x$};
\draw[rotate= 45+180][->] (2.3,0) -- (4,0) node[left]{$10$};
\draw[rotate= 45+180] (2.5,0) node[left]{$y$};
\draw[rotate= 45+5][->] (4.3,0) arc (0:160:4.3);
\end{tikzpicture}
\end{center}
and $\I = \langle abcd, cdef, efgh, ghab, habc, defg, ax, ey \rangle$.
There are eight PRSs as follows:
\begin{align}
  & s_{11} = (abcd, defg, efgh, habc, abcd), && s_{12} = (defg, efgh, habc, abcd, defg), \nonumber \\
  & s_{13} = (efgh, habc, abcd, defg, efgh), && s_{14} = (habc, abcd, defg, efgh, habc); \nonumber \\
  & s_{21} = (abcd, cdef, efgh, ghab, abcd), && s_{22} = (cdef, efgh, ghab, abcd, cdef), \nonumber \\
  & s_{23} = (efgh, ghab, abcd, cdef, efgh), && s_{24} = (ghab, abcd, cdef, efgh, ghab). \nonumber
\end{align}
Thus, we obtain the following corresponding between perfect paths and PPSs:
\begin{align}
  & p_{11} = abc \leftrightarrow (abc, d, efg, h, abc), && p_{12} = d \leftrightarrow (d, efg, h, abc, d), \nonumber \\
  & p_{13} = efg \leftrightarrow (efg, h, abc, d, efg), && p_{14} = h \leftrightarrow (h, abc, d, efg, h); \nonumber \\
  & p_{21} = ab \leftrightarrow (ab, cd, ef, gh, ab), && p_{22} = cd \leftrightarrow (cd, ef, gh, ab, cd), \nonumber \\
  & p_{23} = ef \leftrightarrow (ef, gh, ab, cd, ef), && p_{24} = gh \leftrightarrow (gh, ab, cd, ef, gh). \nonumber
\end{align}
\end{example}

\subsection{Gorenstein projective modules} \label{subsect:Gproj}

Let $A$ be a finite-dimensional $\kk$-algebra. A {\defines Gorenstein-projective module} ($=$G-projective module) $G$ in $\modcat A$ is a module which has a $\Hom_{A}(-, A)$-exact complete projective resolution,
that is, there is an exact sequence of projective modules
\[\cdots \longrightarrow P^{-2}
\mathop{\longrightarrow}\limits^{d^{-2}} P^{-1} \mathop{\longrightarrow}\limits^{d^{-1}} P^{0}
\mathop{\longrightarrow}\limits^{d^{0}}  P^{1}  \mathop{\longrightarrow}\limits^{d^{1}}
P^{2} \longrightarrow \cdots \]
in $\modcat A$ which remains exact after applying the functor $\Hom_A(-,A)$, such that $G \cong \mathrm{Im}d^{-1}$ \cite{AB1969, EJ1995}.
We denote $\Gproj(A)$ by the fully subcategory of $\modcat A$ whose objects are G-projective.
Obviously, any projective module are G-projective. We say a G-projective is {\defines non-trivial} if it is not projective.

The following theorem provide a description of NTIG-projective modules.

\begin{theorem}[{\cite[Theorem 4.1]{C2018}}] \label{thm-Chen}
Let $A=\kk\Q/\I$ be a monomial algebra. Then there is a bijection
\[ \perpath (\Q,\I) \to \NTIGproj(A), p \mapsto pA \]
from the set of all perfect paths to that of all NTIG-projective modules.
\end{theorem}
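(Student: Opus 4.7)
The plan is to exhibit the bijection $p \mapsto pA$ in two halves: first that every $pA$ for $p$ a perfect path is indeed an NTIG-projective module, and second that every NTIG-projective arises in this way with distinct perfect paths giving non-isomorphic modules. The well-definedness half will proceed by constructing an explicit doubly-infinite complete projective resolution of $pA$ that is periodic with period equal to the length of the PPS containing $p$. The hard half will be surjectivity.

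For the first half, fix a perfect path $p$ sitting inside a PPS $(\wp_0 = p, \wp_1, \ldots, \wp_{t-1}, \wp_0)$. For each $i$, the homomorphism $e_{t(\wp_i)} A \to \wp_i A$ sending $e_{t(\wp_i)} \mapsto \wp_i$ is a projective cover, and the key claim is that its kernel equals $\wp_{i+1} A$. Here $\wp_i \wp_{i+1} = 0$ gives the inclusion $\wp_{i+1} A \subseteq \ker$, and the maximality in the definition of a perfect pair, namely that any non-trivial path $\wp$ with $\wp_i \wp = 0$ has $\wp_{i+1}$ as a subpath, gives the reverse inclusion because every generator of $\ker$ must then be a multiple of $\wp_{i+1}$. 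Splicing the resulting short exact sequences cyclically in both directions produces an exact sequence of projectives, periodic of period $t$, in which $pA$ appears as an image. To show it remains exact under $\Hom_A(-,A)$, I would use the identification $\Hom_A(e_{t(\wp_i)}A, A) \cong A e_{t(\wp_i)}$ and observe that the dualized connecting map becomes left multiplication by $\wp_i$; exactness at $A e_{t(\wp_i)}$ then reduces to the left-module mirror of the perfect-pair property for $(\wp_{i-1}, \wp_i)$, which follows from the symmetric formulation of perfectness.

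Indecomposability of $pA$ follows by verifying that $\End_A(pA)$ is local, since any endomorphism lifts along the projective cover and so is determined by multiplication by some path; non-triviality is immediate because an indecomposable projective has the form $e_v A$ whereas $pA$ for non-trivial $p$ has top at $s(p)$ but does not match the structure of any $e_v A$. For injectivity of $p \mapsto pA$, distinct perfect paths yield non-isomorphic modules because the underlying path $p$ can be reconstructed from the top and the basis indexed by continuations $pq \notin \I$ of $p$.

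The main obstacle is surjectivity. Given an NTIG-projective $M$, I would use that $M$ is a syzygy: there is a short exact sequence $0 \to M \to P \to N \to 0$ with $P$ projective and $N$ again G-projective and, after restricting to an indecomposable direct summand in the projective cover, use the monomial hypothesis to conclude that the kernel of a projective cover of an indecomposable decomposes as a direct sum of modules of the form $qA$ for non-trivial paths $q$. Iterating syzygies produces a sequence $q_0, q_1, q_2, \ldots$ with $0 \to q_{i+1}A \to e_{t(q_i)}A \to q_iA \to 0$; because $\Q$ is finite, the sequence must return to its starting point, yielding a cyclic sequence. Finally, the $\Hom_A(-, A)$-exactness of the resulting periodic complete resolution forces each consecutive pair $(q_i, q_{i+1})$ to satisfy the maximality condition of a perfect pair, so $q_0$ is a perfect path and $M \cong q_0 A$. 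The most technical step I anticipate is the translation from $\Hom_A(-,A)$-exactness of the constructed resolution back into the perfect-pair maximality condition, since this is precisely where the monomial structure interacts with the Gorenstein-projective property.
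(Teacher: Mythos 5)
The paper does not actually prove this statement: it is imported verbatim from \cite[Theorem 4.1]{C2018} and used as a black box, so there is no in-paper proof to compare against; I am measuring your outline against Chen's known argument. Your ``well-definedness'' half is essentially right and matches that argument: for a perfect pair $(\wp_i,\wp_{i+1})$ the kernel of $e_{t(\wp_i)}A\to\wp_iA$ is exactly $\wp_{i+1}A$ by the right half of condition (ii) in the definition of perfect pair, the spliced periodic complex is a complete resolution, and $\Hom_A(-,A)$-exactness at each spot is precisely the left half of condition (ii), read through $\Hom_A(e_vA,A)\cong Ae_v$ with the dualized differential becoming right multiplication. (Minor slip: with this paper's left-to-right composition the top of $pA$ is $S(t(p))$, not $S(s(p))$; non-projectivity is cleanest by noting $pA$ is a proper quotient of the local module $e_{t(p)}A$ with the same simple top.)

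The surjectivity half has two genuine gaps. First, you propose to let the kernel of the projective cover ``decompose as a direct sum of modules of the form $qA$'' and then ``restrict to an indecomposable direct summand''. If the kernel really were $a_1A\oplus a_2A$ with $a_1\neq a_2$, no choice of summand could satisfy the right-maximality clause of a perfect pair, because that clause asserts the kernel \emph{equals} a single $q_{i+1}A$; the chain of short exact sequences $0\to q_{i+1}A\to e_{t(q_i)}A\to q_iA\to 0$ you rely on would not exist. What must be proved is that for a G-projective $q_iA$ the kernel is generated by one path, and this is exactly where $\Ext^1_A(q_iA,A)=0$ is used: the projection $a_1A\oplus a_2A\to a_1A\subseteq A$ cannot extend to $e_{t(q_i)}A$ over a monomial algebra, since an extending element $b\in Ae_{t(q_i)}$ with $ba_1=a_1$ must have $e_{t(q_i)}$-coefficient $1$ and then $ba_2\neq0$. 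Second, finiteness of the set of nonzero paths only gives $q_i=q_j$ for some $i<j$, i.e.\ eventual periodicity; it does not place $q_0$ on the resulting cycle, and $q_0$ is a perfect path only if it belongs to a \emph{cyclic} perfect path sequence. To close the cycle through $q_0$ you must also exploit the right half $P^0\to P^1\to\cdots$ of the complete resolution to manufacture predecessors $q_{-1},q_{-2},\dots$ by the same mechanism; only the doubly infinite sequence is forced by finiteness to be periodic through $q_0$. With these two repairs your plan coincides with Chen's proof.
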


\begin{example} \rm \label{ex:1-1}
Let $A=\kk\Q/\I$ be a string algebra given in Example \ref{ex:1}.
All NTIG-projective modules are $abcA$ $\cong (4)$, $dA$ $\cong \left(\begin{smallmatrix} 5 \\ 6 \\ 7 \end{smallmatrix}\right)$,
$efgA$ $\cong (8)$, $hA$  $\cong \left(\begin{smallmatrix} 1 \\ 2 \\ 3 \end{smallmatrix}\right)$,
$abA$ $\cong \left({^3_4}\right)$, $cdA$ $\cong \left({^1_2}\right)$, $efA$ $\cong \left({^7_8}\right)$, $cdA$ $\cong \left({^5_6}\right)$.
\end{example}

\subsection{The irreducible morphisms in $\Gproj(A)$}
The present subsection is devoted to describing the irreducible morphisms in $\Gproj(A)$.

First of all, we recall the description of irreducible morphisms between string modules over string algebra \cite{BR1987}.
Let $w$ be a string. The irreducible morphisms starting at $\M(w)$ can be described by modifying the string $w$ on the left or on the right:
\begin{itemize}
  \item[Case] 1. If there is an arrow $a\in \Q_1$ such that $aw$ is a string,
    then let $v$ be the maximal direct string starting at $s(a)$, and define $w_l$ to be the string $v^{-1}aw$.

  \item[Case] 2. If there is no arrow $a\in \Q_1$ such that $aw$ is a string,
    Then, either $w$ is a direct string or we can write $w=va^{-1}w'$, where $a\in \Q_1$, $w'$ is a string, and $v$ is a maximal direct substring starting at $s(w)$.
    In the former case, we define $w_l$ to be the trivial string, and in the latter case $w_l:=w'$.
\end{itemize}
The strings $v^{-1}a$ and $va^{-1}$ are called {\defines hook} and {\defines cohook}, respectively (see \Pic \ref{fig:left changing string} (L1) and (L2), respectively).
Dually, we can define hook and cohook in the case shown in \Pic \ref{fig:right changing string}.

\begin{figure}[H]
\centering
\begin{tikzpicture}
\draw (-3,0.8) node{$w_l=$};
\draw (0,0) node{$
\xymatrix@R=0.35cm{
& \bullet \ar@{~>}[ldd]^v \ar[rd]^a &  & &  \\
& & \bullet \ar@{~}[rr]^{w} & & \bullet \\
\bullet & &  &  &
}
$};
\draw(0,-1.3) node{(L1)};
\end{tikzpicture}
\ \ \ \
\begin{tikzpicture}
\draw (-3,0.8) node{$w=$};
\draw (0,0) node{$
\xymatrix@R=0.35cm{
\bullet \ar@{~>}[rdd]_v &   &  & &  \\
& & \bullet \ar[ld]^a \ar@{~}[rr]_{w_l} & & \bullet  \\
 & \bullet & & &
}
$};
\draw(0,-1.3) node{(L2)};
\end{tikzpicture}
\caption{Left: adding a hook on $s(w)$; Right: removing a cohook from $s(w)$}
\label{fig:left changing string}
\end{figure}
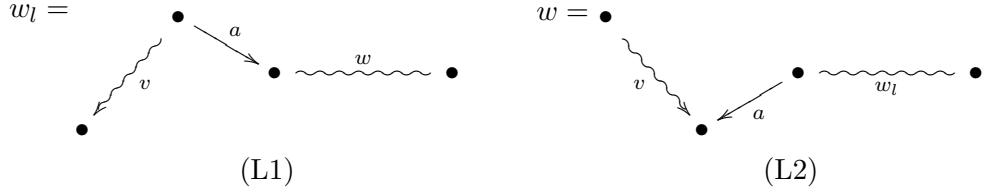

The definition of $w_r$ is dual (see \Pic \ref{fig:right changing string} (R1) and (R2), respectively).

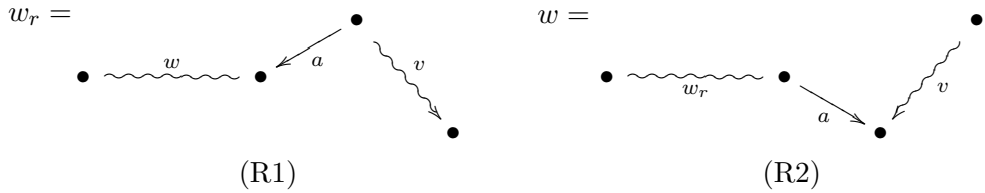
\begin{figure}[H]
\centering
\begin{tikzpicture}
\draw (-3,0.8) node{$w_r=$};
\draw (0,0) node{$
\xymatrix@R=0.35cm{
  &  &  & \bullet\ar[ld]^a \ar@{~>}[rdd]^v &  \\
\bullet \ar@{~}[rr]^{w}  &  & \bullet &  &  \\
  &  &  &  &  \bullet
}
$};
\draw(0,-1.3) node{(R1)};
\end{tikzpicture}
\ \ \ \
\begin{tikzpicture}
\draw (-3,0.8) node{$w=$};
\draw (0,0) node{$
\xymatrix@R=0.35cm{
&  &  &  & \bullet \ar@{~>}[ldd]^v\\
\bullet \ar@{~}[rr]_{w_r}&  & \bullet \ar[rd]_a &  &  \\
&  &  & \bullet  &
}
$};
\draw(0,-1.3) node{(R2)};
\end{tikzpicture}
\caption{Left: adding a hook on $t(w)$; Right: removing a cohook from $t(w)$}
\label{fig:right changing string}
\end{figure}

It is well-kwon that any irreducible morphism between two string modules is one of the form of
\begin{itemize}
\item[(L1)] the monomorphism $\M(w) \hookrightarrow \M(w_l)$;
\item[(R1)] the monomorphism $\M(w) \hookrightarrow \M(w_r)$;
\item[(L2)] the epimorphism $\M(w_l) \twoheadrightarrow \M(w)$;
\item[(R2)] the epimorphism $\M(w_r) \twoheadrightarrow \M(w)$.
\end{itemize}

Next, we provide some lemmas to describe irreducible morphisms in $\Gproj(A)$.

\begin{lemma}\label{lemm:Gproj(I)}
Let $A = \kk\Q/\I$ be a string algebra and $G\cong pA$ be an NTIG-projective module, where $p$ is a path given by a PPS $(p=\wp_1, \wp_2, \cdots, \wp_t=p)$ on a relation cycle $\C$.
Then the homomorphism $\hbar_p: P(t(p))\to P(s(p))$ corresponding to the path $p$ has a decomposition as follows
\[ \xymatrix{
P(t(p)) \ar[rd]_{f_2} \ar[rr]^{\hbar_p} & & P(s(p)), \\
& pA \ar@{->}[ru]_{f_1} &
} \]
where $f_1$ {\rm(}resp.  $f_2$ {\rm)} can not be decomposed through any indecomposable projective module.
\end{lemma}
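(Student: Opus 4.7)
The plan is to take $f_1$ to be the canonical inclusion $pA\hookrightarrow P(s(p))=e_{s(p)}A$ of the cyclic submodule generated by $p$, and $f_2$ to be the canonical surjection $P(t(p))=e_{t(p)}A\twoheadrightarrow pA$ sending $e_{t(p)}$ to $p$ (whose kernel is the right annihilator of $p$ in $e_{t(p)}A$). The identity $\hbar_p=f_1\circ f_2$ is then immediate, since both sides send $e_{t(p)}$ to $p\in P(s(p))$.

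To rule out non-trivial factorisations of $f_2$, I would use uniqueness of the projective cover. Suppose $f_2=h'\circ g'$ with $g':P(t(p))\to P(v)$ and $h':P(v)\to pA$. Then $h'$ is surjective, and since $\top(pA)=S(t(p))$, passing to tops forces $v=t(p)$. Both $f_2$ and $h'$ are therefore projective covers of $pA$, so $h'=f_2\circ\psi$ for some $\psi\in\mathrm{Aut}(P(t(p)))$. Substituting, $f_2\circ(\psi\circ g'-\mathrm{id})=0$, i.e.\ the image of $\psi\circ g'-\mathrm{id}$ lies in $\ker f_2\subseteq\rad P(t(p))$. Hence $\psi\circ g'=\mathrm{id}+(\text{radical})$ is a unit in $\mathrm{End}(P(t(p)))$, which forces $g'$ to be an automorphism; the factorisation is thus trivial.

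The argument for $f_1$ is more delicate and exploits the perfect pair $(p,\wp_2)$. Assuming $f_1=h\circ g$ with $g:pA\to P(v)$ and $h:P(v)\to P(s(p))$, set $r=h(e_v)\in e_{s(p)}Ae_v$ and $y=g(p)\in e_vAe_{t(p)}$, so the factorisation becomes $ry=p$ in $A$. Expanding $r=\sum_j d_jr_j$ and $y=\sum_i c_iy_i$ in the monomial basis of paths, the identity $ry=p$ forces at least one path-level equality $r_jy_i=p$; if $h$ lies in the radical then every $r_j$ is non-trivial, so $y_i$ is a proper suffix of $p$. Well-definedness of $g$ on $pA$, together with $p\wp_2=0$, gives $y\wp_2=0$ in $A$. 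By Lemma~\ref{lemm:per.path}, $p\wp_2$ is a generator of $\I$ and therefore admits no proper subpath lying in $\I$; hence the proper suffix $y_i\wp_2$ of $p\wp_2$ is non-zero in $A$. Linear independence of distinct non-zero paths in the monomial algebra $A$ then forces $c_i=0$ for every $y_i$ with $y_i\wp_2\ne 0$, which kills every contribution to $ry$ and contradicts $ry=p$. The main obstacle is the degenerate case $v=t(p)$ with a trivial $y_i=e_{t(p)}$: here $y$ would acquire a non-zero scalar at $e_{t(p)}$, making $g:pA\to P(t(p))$ surjective, which together with the projective cover $f_2$ would force $pA\cong P(t(p))$ by Nakayama, contradicting the non-projectivity of the NTIG-projective $pA$.
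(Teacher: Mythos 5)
Your proposal is correct, but it proceeds quite differently from the paper. The paper constructs $f_1$ and $f_2$ by explicitly walking through a chain of strings obtained from $\M^{-1}(P(t(p)))$ by repeatedly removing cohooks and adding hooks in the sense of Butler--Ringel, and then rules out factorizations through a projective $P\not\cong P(s(p))$ (resp.\ $P(t(p))$) by analysing the string of $P$ and arguing that no quotient of $P$ embeds into $P(s(p))$; this explicit string-level construction is reused later (e.g.\ in the proofs of Lemmas \ref{lemm:pA-left} and \ref{lemm:pA-right}) to exhibit the intermediate irreducible morphisms. You instead take the canonical image factorization $P(t(p))\twoheadrightarrow pA\hookrightarrow P(s(p))$, dispose of $f_2$ by uniqueness of the projective cover (noting $\top(pA)=S(t(p))$ forces $v=t(p)$ and then $\psi\circ g'=\mathrm{id}+(\text{radical})$ is invertible), and dispose of $f_1$ by the identity $ry=p$ in the path basis: since a radical $h$ forces every $r_j$ to be non-trivial, the relevant $y_i$ are proper suffixes of $p$, so $y_i\wp_2$ is a nonzero proper subpath of the generator $p\wp_2$ (Lemma \ref{lemm:per.path}(2)), and $y\wp_2=0$ then kills the corresponding coefficients $c_i$, contradicting $ry=p$. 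This is sound: the only points worth noting are that your final ``degenerate case'' paragraph is actually already subsumed by the main argument (a trivial $y_i=e_{t(p)}$ still satisfies $y_i\wp_2=\wp_2\ne 0$, so $c_i=0$), and that your argument uses only that $A$ is monomial with $p\wp_2$ a minimal relation, so it is both more elementary and more general than the paper's string-combinatorial proof, at the cost of not producing the intermediate morphisms that the paper's version supplies for later use.
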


\begin{proof}
Assume that $\wp_i = \alpha_{i,1}\alpha_{i,2}\cdots \alpha_{i,l_i}$ ($1\le i\le t-1$), $t(\alpha_{i,j}) = v_{i,j}$ ($1\le j\le l_i$) and $e_{i,j}$ be the idempotent corresponding to $v_{i,j}$.
Then the string corresponding to $P(t(p))$ equals to
\[ \M^{-1}(e_{1,l_1}A) = \hat{\wp}^{-1}\wp_2\alpha_{3,1}\alpha_{3,2}\cdots \alpha_{3,l_3-1}
= (\hat{\wp}^{-1}\wp_2\alpha_{3,1}\alpha_{3,2}\cdots \alpha_{3,l_3-2}) \cdot \alpha_{3,l_3-1} {e_{3,l_3-1}}\]
which can be seen as a string $w_1$, where $\hat{\wp}$ is a path starting at the point $t(p)=t(\wp_1)=s(\wp_2)$.
\begin{itemize}
\item
If $\ell(\wp_2)=\ell(\wp_3)=1$ (in this case, $l_2=l_3=1$ and $w_1 = \hat{\wp}^{-1}\alpha_{2,1}$)
and there exists an arrow ($\ne\alpha_{2,1}$) ending at $v_{2,l_2}$,
directly deleting the last arrow $\alpha_{2,1} = \wp_2 $ of $w_1$, we obtain that
$w_2 = \hat{\wp}^{-1}$ satisfies $\M(w_2) \cong \M(\hat{\wp}) \cong pA$ (notice that if $\ell(p)>1$ then $\ell(\hat{\wp}) = 0$ by the definition of perfect pair),
and
 the homomorphism $\hbar_{\wp_2}$ induced by $\wp_2=\alpha_{2,1}$ is not irreducible,
since $\hbar_{\wp_2}$ has a canonical decomposition.

\item
Otherwise, there exists no arrow $\beta_{3,j}$ ($\ne \alpha_{3,j}$) ending at $v_{3,j}$ such that $\beta_{3,j}\alpha_{3,{j+1}} \ne 0$ ($1\le j\le l_3-1$).
Then, by (R2), we have
\begin{center}
$(w_1)_r = \hat{\wp}^{-1}\wp_2\alpha_{3,1}\alpha_{3,2}\cdots \alpha_{3,l_3-2}$ ($=:w_2$),
\end{center}
and obtain an irreducible homomorphism $\hbar_{\alpha_{3,l_3-2}} \in \Hom_A(\M(w_1), \M(w_2))$ which is induced by the path $\alpha_{3,l_3-2}$ of length one.
\end{itemize}

Thus, for any $1\le j< l_3-1$, we have
\begin{center}
$w_j = \hat{\wp}^{-1}\wp_2\alpha_{3,1}\alpha_{3,2}\cdots \alpha_{3,l_3-j}
= (\hat{\wp}^{-1}\wp_2\alpha_{3,1}\alpha_{3,2}\cdots \alpha_{3,l_3-(j+1)})\alpha_{3,l_3-j}e_{3,l_3-j} = $
\end{center}
\begin{center}
$\xymatrix@R=0.5cm{
& &  & & v_{3,l_3-j} \ar[ldd]^{\text{trivial path}\ e_{3,l_3-j}, } \\
\bullet \ar@{~}[rr]^{(w_j)_r = \hat{\wp}^{-1}\wp_2\alpha_{3,1}\cdots\alpha_{3,l_3-(j+1)}}
& & \bullet \ar[rd]_{\alpha_{3,l_3-j}} & & \\
& & & v_{3,l_3-j} &
}$
\end{center}
and $w_{j+1} = \hat{\wp}^{-1}\wp_2\alpha_{3,1}\cdots\alpha_{3,l_3-(j+1)}$ equals either $(w_{j})_r$ or the string obtained by deleting the last arrow of $w_j$.
It is easy to see that $\M(w_{l_3-1}) \cong pA$.

On the other hand, assume $\hat{\wp}=\beta_1\cdots\beta_s$. Then $w_{l_3-1}$ is equivalent to
\[(\alpha_{2,l_2-1}^{-1}\cdots\alpha_{2,1}^{-1}\beta_1\cdots\beta_{s-1})\beta_{s} e_{t(\beta_s)} =: \hat{w}_1. \]
Similarly, we have $\hat{w}_j = \alpha_{2,l_2-1}^{-1} \cdots \alpha_{2,1}^{-1}\beta_1 \cdots \beta_{s-(j-1)}$ is the string such that $\hbar_{\beta_{s-(j-1)}}: \M(\hat{w}_j) \to \M(\hat{w}_{j+1})$
is either an irreducible homomorphism in $\modcat A$ or a epimorphism which is not decomposed by any indecomposable projective module.
We obtain a homomorphism
\[\hbar_{\hat{\wp}}: \M(\hat{w}_1) \to \M(\hat{w}_{s+1}), \]
where $\hat{w}_{s+1}$ is equivalent to
\[ \hat{w}_{s+1}^{-1} = \alpha_{2,1} \alpha_{2,2} \cdots \alpha_{2,l_2-1} =: \omega_1. \]
Then there are two situations as follows:
\begin{itemize}
  \item $\ell(\wp_{t-1})=\ell(\wp_1) = 1$.
    In this case, $\wp_1$ is an arrow, we have $(\omega_1)_l = \tilde{\wp}^{-1}\wp_1\omega_1$,
    and $\M((\omega_1)_l) \cong e_{s(\wp_1)}A = P(s(p))$.

  \item Otherwise, by the definition of string algebra,
    for any arrow $\gamma_{1,j} (\ne \alpha_{1,j})$ ending (resp.  starting) to $v_{1,j}$ (resp.  $v_{1,j-1}$),
    where $1\le j\le l_1-1$ (resp.  $1+1\le j\le l_1$), we always have $\gamma_{1,j}\alpha_{1,j+1} = 0$ (resp.  $\alpha_{1,j-1}\gamma_{1,j} = 0$).
    Thus, we can apply (L1) to $\omega_1$ and obtain the string
    \begin{center}
      $\omega_2 = \alpha_{1,l_1}\omega_1 = \alpha_{1,l_1}\cdot (\alpha_{2,1} \cdots \alpha_{2,l_2-1})$.
    \end{center}
\end{itemize}

Repeating the above steps, we obtain a morphism from $M$  to $pA$, where
    \begin{center}
      $M \cong \M\big((\alpha_{2,l_2-1}^{-1} \cdots \alpha_{2,1}^{-1}) \cdot \wp_1^{-1}\big)$.
    \end{center}
Notice that $M$ is a direct summand of $\rad P(t(a))$. Thus, there is a homorphism from $pA$ to $P(t(p))$.

    Therefore, $\hbar_p$ is not irreducible in $\Gproj(A)$ and has the following decomposition:
\begin{align}\label{diagram in lemma:1-Gproj}
\xymatrix{
P(t(p)) \ar[rd]_{f_2 = \tilde{\wp}p\hat{\wp}\cdot (?)} \ar[rr]^{\hbar_p} & & P(s(p)). \\
& pA \ar@{^(->}[ru]^{\le}_{f_1 = p\cdot (?)} &
}
\end{align}

Next, we show that $f_2=\tilde{\wp}p\hat{\wp}\cdot (?)$ and $f_1=p\cdot (?)$ are not decomposed through any indecomposable projective module not isomorphic to $P(t(p))$ and $P(s(p))$, respectively.
Assume that there is an indecomposable projective module $P \not\cong P(s(p))$ and two homomorphisms $g_2: pA \to P$ and $g_1: P \to P(s(p))$ such that $f_1 = g_1g_2$.
Consider the canonical decomposition $g_2=g_{21}g_{22}$ of $g_2$:
\[\xymatrix{ pA \ar[r]_{g_{22}} \ar@/^1pc/[rr]^{g_2} & g_{22}(pA) \ar[r]_{g_{21}} & P, }\]
we have two cases as follows which respectively yield contradictions.
\begin{itemize}
\item[(1)]
If $\hat{\wp} = \gamma_1\cdots\gamma_l$ is a path of length $l\ge 1$,
then by Proposition \ref{prop:perpaths}, $\alpha_{1,\ell_1} \gamma_1 \ne 0$.
Thus, by the definition of string algebra, we have $\ell(\wp_1) = 1 = \ell(\wp_2)$  and $\ell_1=1$.
In this case,
\begin{center}
$\M^{-1}(pA) = \hat{\wp}$, 
$\M^{-1}(P(s(p))) = \tilde{\wp}^{-1} \alpha_{1,1}\gamma_1\cdots\gamma_l$,

and $\M^{-1}(g_{22}(pA)) = \gamma_1\cdots\gamma_{l'}$, where $1\le l'\le l$.
\end{center}
Since $g_{21}(pA)$ is a submodule of $P$ and $A$ is a string algebra, there is a path $q$ from $v\in Q_0$ to $s(p)$ such that
\begin{center}
$\M^{-1}(P) = \M^{-1}(P(v))  = \hat{q}^{-1}q\alpha_{1,1}\gamma_1\cdots\gamma_{l'} \ne 0$
\end{center}
for some path $\hat{q}$ starting at $v$, as shown in \Pic \ref{fig-1: pf in lemm:Gproj(I)}, if there exists $\gamma_{l'+1}$ starting at $t(\gamma_{l'})$ then the path $qp\gamma_1\cdots\gamma_{l'+1}$ is a generator in $\I$.
It is easy to see that any quotient of $P$ is not a submodule of $P(s(p))$ by the canonical decomposition of $g_1$, this is a contradiction.

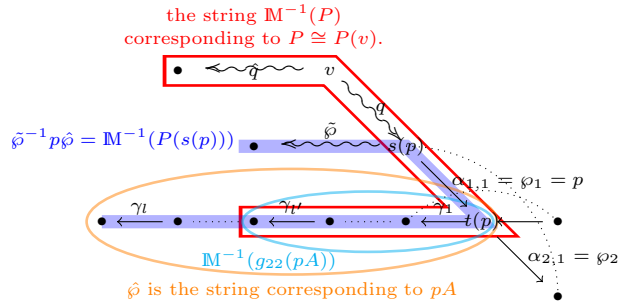
\begin{figure}[H]
\centering
\begin{tikzpicture}[scale=1] \tiny
\draw[red][opacity=1.0] [line width=12pt] (-4.2,2) -- (-2,2) -- (0,0) -- (-3.2,0);
\draw[white] [line width=10pt] (-4.15,2) -- (-2,2) -- (0,0) -- (-3.15,0);
\draw[red] (-3,2.5) node[above]{the string $\M^{-1}(P)$};
\draw[red] (-3,2.2) node[above]{corresponding to $P\cong P(v)$.};
\draw[blue][opacity=0.3][line width=5pt] (-3.2,1) -- (-1,1) -- (0,0) -- (-5,0);
\draw[blue] (-3.15,1.1) node[left]{$\tilde{\wp}^{-1}p\hat{\wp} = \M^{-1}(P(s(p)))$};
\draw[orange][opacity=0.5][line width=1pt] (-2.5,0) ellipse (2.7cm and 0.7cm);
\draw[orange] (-2.5,-0.7) node[below]{$\hat{\wp}$ is the string corresponding to $pA$};
\draw[cyan][opacity=0.5][line width=1pt] (-1.5,0) ellipse (1.65cm and 0.4cm);
\draw[cyan] (-2.8,-0.26) node[below]{$\M^{-1}(g_{22}(pA))$};
\draw (0,0) node{$t(p)$};
\draw[->] (0.2,-0.2) -- (0.8,-0.8); \draw (1,-1) node{$\bullet$};
\draw (-0.5,0.5) node[right]{$\alpha_{1,1}=\wp_1=p$};
\draw[shift={(-1,1)}][->] (0.2,-0.2) -- (0.8,-0.8);
\draw (-1,1) node{$s(p)$}; \draw (0.5,-0.5) node[right]{$\alpha_{2,1}=\wp_2$};
\draw[dotted] (-1,1) to[out=0,in=90] (1,-1);
\draw[shift={(1,0)}][->] (-0.2,0) -- (-0.8,0);
\draw[shift={(1,0)}] (0,0) node{$\bullet$};
\draw[->] (-0.2,0) -- (-0.8,0);
\draw (-1,0) node{$\bullet$};
\draw[dotted][shift={(-1,0)}] (-0.2,0) -- (-0.8,0);
\draw[shift={(-1,0)}] (-1,0) node{$\bullet$};
\draw[->][shift={(-2,0)}] (-0.2,0) -- (-0.8,0);
\draw[shift={(-2,0)}] (-1,0) node{$\bullet$};
\draw[dotted][shift={(-3,0)}] (-0.2,0) -- (-0.8,0);
\draw[shift={(-3,0)}] (-1,0) node{$\bullet$};
\draw[->][shift={(-4,0)}] (-0.2,0) -- (-0.8,0);
\draw[shift={(-4,0)}] (-1,0) node{$\bullet$};
\draw (-0.5,0) node[above]{$\gamma_1$};
\draw[shift={(-2,0)}] (-0.5,0) node[above]{$\gamma_{l'}$};
\draw[shift={(-4,0)}] (-0.5,0) node[above]{$\gamma_{l}$};
\draw[dotted] (1,0) to[out=135,in=45] (-1,0);
\draw[shift={(-2,2)}] (0,0) node{$v$};
\draw[shift={(-2,2)}] (0,0) node[left]{$\xymatrix@C=1.3cm{ & \ar@{~>}[l]}$};
\draw[shift={(-3,2)}] (0,0) node{$\hat{q}$};
\draw[shift={(-4,2)}] (0,0) node{$\bullet$};
\draw[shift={(-1,1)}] (-0.5,0.5) node[right]{$q$};
\draw[shift={(-1,1)}] (-0.5,0.5) node{$\xymatrix@C=0.8cm{ \ar@{~>}[rd] & \\ & }$};
\draw[shift={(-1,1)}] (0,0) node[left]{$\xymatrix@C=1.3cm{ & \ar@{~>}[l]}$};
\draw (-2,1) node[above]{$\tilde{\wp}$};
\draw[shift={(-3,1)}] (0,0) node{$\bullet$};
\end{tikzpicture}
\caption{The case that $\hat{\wp}$ is a path of length $l\ge 1$}
\label{fig-1: pf in lemm:Gproj(I)}
\end{figure}

\item[(2)] If $\hat{\wp}$ is trivial, then the lengths of $\wp_1$ and $\wp_2$ are greater than or equal to $1$.
    The case of $\ell(\wp_1) = \ell(\wp_2)=1$ is similar to case (1).
    We only consider the case such that at least one of $\ell(\wp_2) > 1$ and $\ell(\wp_1) > 1$ hold.
    Notice that there is no arrow $\alpha$ ending at $t(p)$.
    Otherwise, we have $\alpha\alpha_{2,1} \in \I$ by the definition of string algebra,
    then $(\wp_1, \wp_2)$ is not a perfect pair by Proposition \ref{prop:perpaths}.
    So, the string corresponding to $g_{22}(pA)$ is $\alpha_{2,1}\cdots\alpha_{2,l_2'}$, where $1\le l_2' \le l_2-1$.
    $g_{22}(pA)$ is a submodule of $P$, then there is a path $q$ form $v\in \Q_0$ to $s(p)$ such that
    \begin{center}
    $\M^{-1}(P(v)) = \M^{-1}(p) = \hat{q}^{-1}qp\alpha_{2,1}\cdots\alpha_{2,l_2'}$,
    \end{center}
    for some $\hat{q}$ starting at $v$, see \Pic \ref{fig-2: pf in lemm:Gproj(I)}
    (note that if there exists an arrow $\delta_{1,i}$ ($\ne \alpha_{1,i}$, $1\le i\le \ell_1-1$) with ending point $t(\delta_{1,i}) = v_{1,i}$, then $\delta_{1,i}\alpha_{1,i+1} \in \I$).
    It is easy to see that any quotient of $P$ is not a submodule of $P(s(p))$, this is a contradiction.

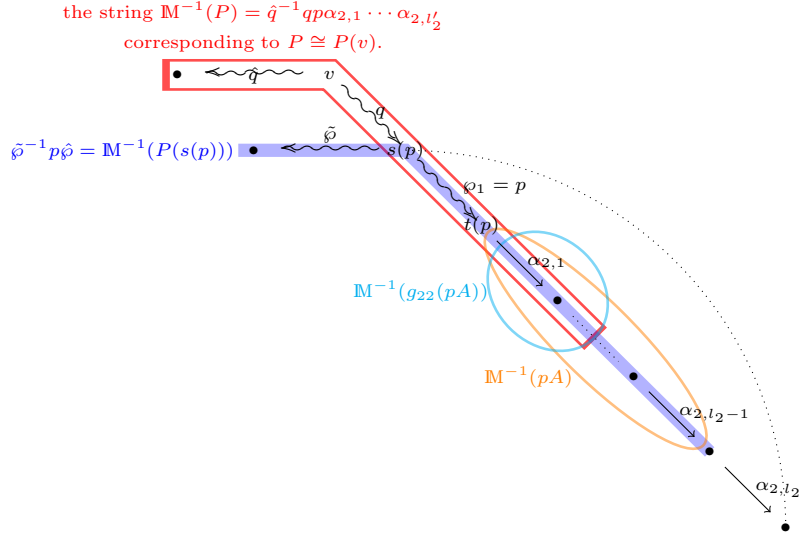
\begin{figure}[htbp]
\centering
\begin{tikzpicture}[scale=1] \tiny
\draw[red][opacity=0.7] [line width=12pt] (-4.2,2) -- (-2,2) -- (0,0) -- (1.5,-1.5);
\draw[white] [line width=10pt] (-4.1,2) -- (-2,2) -- (0,0) -- (1.45,-1.45);
\draw[red] (-3,2.5) node[above]{the string $\M^{-1}(P)
  = \hat{q}^{-1}qp\alpha_{2,1}\cdots\alpha_{2,l_2'} $};
\draw[red] (-3,2.2) node[above]{corresponding to $P\cong P(v)$.};
\draw[blue][opacity=0.3][line width=5pt] (-3.2,1) -- (-1,1) -- (0,0) -- (3,-3);
\draw[blue] (-3.15,1) node[left]{$\tilde{\wp}^{-1}p\hat{\wp} = \M^{-1}(P(s(p)))$};
\draw[orange][opacity=0.5][line width=1pt][rotate around={315:(1.5,-1.5)}] (1.5,-1.5) ellipse (2cm and 0.5cm);
\draw[orange] (1.3,-2) node[left]{$\M^{-1}(pA)$};
\draw[cyan][opacity=0.5][line width=1pt][rotate around={315:(0.87,-0.87)}] (0.87,-0.87) ellipse (0.87cm and 0.7cm);
\draw[cyan] (0.19,-0.87) node[left]{$\M^{-1}(g_{22}(pA))$};
\draw (0,0) node{$t(p)$};
\draw[->] (0.2,-0.2) -- (0.8,-0.8); \draw (1,-1) node{$\bullet$};
\draw[shift={(1,-1)}][dotted] (0.2,-0.2) -- (0.8,-0.8); \draw[shift={(1,-1)}] (1,-1) node{$\bullet$};
\draw[shift={(2,-2)}][->] (0.2,-0.2) -- (0.8,-0.8); \draw[shift={(2,-2)}] (1,-1) node{$\bullet$};
\draw[shift={(3,-3)}][->] (0.2,-0.2) -- (0.8,-0.8); \draw[shift={(3,-3)}] (1,-1) node{$\bullet$};
\draw (-0.35,0.5) node[right]{$\wp_1=p$};
\draw[shift={(-0.5,0.5)}] (0,0) node{$\xymatrix@C=0.8cm{ \ar@{~>}[rd] & \\ & }$};
\draw (-1,1) node{$s(p)$}; \draw (0.5,-0.5) node[right]{$\alpha_{2,1}$};
\draw[shift={(2,-2)}] (0.5,-0.5) node[right]{$\alpha_{2,l_2-1}$};
\draw[shift={(3,-3)}] (0.5,-0.5) node[right]{$\alpha_{2,l_2}$};
\draw[dotted] (-1,1) to[out=0,in=90] (4,-4);
\draw[shift={(-2,2)}] (0,0) node{$v$};
\draw[shift={(-2,2)}] (0,0) node[left]{$\xymatrix@C=1.3cm{ & \ar@{~>}[l]}$};
\draw[shift={(-3,2)}] (0,0) node{$\hat{q}$};
\draw[shift={(-4,2)}] (0,0) node{$\bullet$};
\draw[shift={(-1,1)}] (-0.5,0.5) node[right]{$q$};
\draw[shift={(-1,1)}] (-0.5,0.5) node{$\xymatrix@C=0.8cm{ \ar@{~>}[rd] & \\ & }$};
\draw[shift={(-1,1)}] (0,0) node[left]{$\xymatrix@C=1.3cm{ & \ar@{~>}[l]}$};
\draw (-2,1) node[above]{$\tilde{\wp}$};
\draw[shift={(-3,1)}] (0,0) node{$\bullet$};
\end{tikzpicture}
\caption{The case that $\hat{\wp}$ is a trivial path }
\label{fig-2: pf in lemm:Gproj(I)}
\end{figure}
\end{itemize}

Similarly, we can show that $f_2=\tilde{\wp}p\hat{\wp}\cdot (?)$ can not decomposed through any indecomposable projective module not isomorphic to $P(t(p))$.
\end{proof}

\begin{lemma} \label{lemm:pA-left}
Let $A=\kk\Q/\I$ be a string algebra contains at least one relation cycle $\C=\alpha_1\cdots\alpha_{\ell}$ of length $\ell$,
and $p_{[i, i+j]}$ be the path $\alpha_i\alpha_{i+1} \cdots \alpha_{i+j-1}$ {\color{blue}$\ne 0$} of length $j$, where the indices are taken modulo $l$ if necessary.
Suppose that $p_{[1,l]}A$ and $p_{[1,n]}A$ are two NTIG-projective modules over $A$ $(l<n)$,
then there exists a homomorphism $\hbar_{p_{[l,n]}}: p_{[1,n]}A \to p_{[1,l]}A$ induced by the path $p_{[l,n]}$ on $\C$.
Furthermore, if $p_{[1, m]}A$ is not G-projective for all $l<m<n$,
then \[\dim_{\kk}\Irr_{\Gproj(A)}(p_{[1,n]}A, p_{[1,l]}A) = 1. \]
\end{lemma}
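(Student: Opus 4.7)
The plan is to construct $\hbar_{p_{[l,n]}}$ and then, under the hypothesis, to show that it generates the one-dimensional space $\Irr_{\Gproj(A)}(p_{[1,n]}A, p_{[1,l]}A)$. For the construction, since $p_{[1,n]} = p_{[1,l]}\cdot p_{[l,n]}$ is nonzero on $\C$, the right ideals of $A$ satisfy $p_{[1,n]}A = p_{[1,l]}(p_{[l,n]}A) \subseteq p_{[1,l]}A$, so I take $\hbar_{p_{[l,n]}}$ to be this inclusion. Via the canonical surjections $P(t(p_{[1,n]})) \twoheadrightarrow p_{[1,n]}A$ and $P(t(p_{[1,l]})) \twoheadrightarrow p_{[1,l]}A$ from Lemma~\ref{lemm:Gproj(I)}, $\hbar_{p_{[l,n]}}$ is precisely the morphism induced by right multiplication by $p_{[l,n]} \in e_{t(p_{[1,l]})}Ae_{t(p_{[1,n]})}$.

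For the upper bound $\dim_{\kk}\Irr_{\Gproj(A)}(p_{[1,n]}A, p_{[1,l]}A) \le 1$, I would verify $\dim_{\kk}\Hom_A(p_{[1,n]}A, p_{[1,l]}A) = 1$ by identifying both modules as string modules via the explicit strings emerging in the proof of Lemma~\ref{lemm:Gproj(I)} and then applying the Crawley--Boevey graph-morphism formula for Hom-spaces between string modules: the perfect-pair restrictions imposed by the PPS around $\C$ leave only one admissible graph morphism, namely the one associated to $p_{[l,n]}$. Since $\hbar_{p_{[l,n]}}$ is nonzero and a non-isomorphism ($p_{[1,n]}A \subsetneq p_{[1,l]}A$ because $l<n$), this forces $\rad_{\Gproj(A)}(p_{[1,n]}A, p_{[1,l]}A) = \kk\,\hbar_{p_{[l,n]}}$.

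For the lower bound $\hbar_{p_{[l,n]}} \notin \rad^2_{\Gproj(A)}(p_{[1,n]}A, p_{[1,l]}A)$, I would argue by contradiction: assume $\hbar_{p_{[l,n]}} = g\circ f$ with $f \colon p_{[1,n]}A\to G$ and $g \colon G\to p_{[1,l]}A$ non-isomorphisms in $\Gproj(A)$, and reduce to $G$ indecomposable. If $G=P(v)$ is indecomposable projective, then composing with the canonical surjection $P(t(p_{[1,n]}))\twoheadrightarrow p_{[1,n]}A$ and inclusion $p_{[1,l]}A\hookrightarrow P(s(p_{[1,l]}))$ yields a factorization of the right-multiplication map $P(t(p_{[1,n]}))\to P(s(p_{[1,l]}))$ through $P(v)$; in a monomial algebra this corresponds to a path decomposition $p_{[1,n]} = p_1\cdot p_2$ with $t(p_1)=v$, and a careful application of the "cannot be decomposed through any indecomposable projective" property of the canonical factors $f_1, f_2$ in Lemma~\ref{lemm:Gproj(I)} (applied to both $p_{[1,n]}$ and $p_{[1,l]}$) then shows the factorization is necessarily trivial. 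If $G=qA$ is NTIG-projective, the sandwich $p_{[1,n]}A \subseteq qA \subseteq p_{[1,l]}A \subseteq P(s(\alpha_1))$ combined with Theorem~\ref{thm-Chen} and Proposition~\ref{prop:perpaths} should force $q$ to be a perfect path on $\C$ starting at $s(\alpha_1)$, hence $q = p_{[1,m]}$ for some $l<m<n$, contradicting the hypothesis that no such $p_{[1,m]}A$ is $G$-projective. The main obstacle is exactly this NTIG-projective sub-case: rigorously ruling out perfect paths $q$ that branch off $\C$, or start at a different vertex, will be the technical heart of the proof and require a delicate combinatorial analysis of PPSs together with the submodule lattice of $P(s(\alpha_1))$ restricted to Gorenstein-projective submodules.
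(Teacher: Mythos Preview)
Your proposal is sound in outline but takes a different route from the paper in both halves of the dimension computation.

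\textbf{Upper bound.} The paper does not invoke the Crawley--Boevey graph-map formula. Instead it shows directly, by a short quiver argument, that $p_{[l,n]}$ is the \emph{unique} path from the vertex $l$ to the vertex $n$ inducing a nonzero map: if $p'=\beta_1\cdots\beta_s\neq p_{[l,n]}$ were another such path, then the last arrow $\beta_s$ (which ends at $n$ but differs from $\alpha_{n-1}$) satisfies $\beta_s\alpha_n=0$ by the string-algebra axioms; since $p_{[1,n]}$ sits in a perfect pair $(p_{[1,n]},p'')$ with $p''$ beginning at $\alpha_n$, this gives $\beta_s p''=0$, and the perfect-pair definition then forces $p_{[1,n]}$ to be a subpath of the single arrow $\beta_s$, a contradiction. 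This is more elementary and self-contained than your appeal to the Hom formula, and it is what the paper actually does.

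\textbf{Lower bound.} The paper handles irreducibility essentially by asserting a biconditional: it proves explicitly that if some intermediate $p_{[1,m]}A$ is G-projective then $\hbar_{[l,n]}=\hbar_{[l,m]}\hbar_{[m,n]}$ factors, and then states the converse without a detailed case analysis. Your proposed split into the projective case (handled via Lemma~\ref{lemm:Gproj(I)}) and the NTIG case is exactly the rigorous content the paper leaves implicit, so your plan is in fact more careful here.

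\textbf{On your stated obstacle.} Your worry about an NTIG middle term $qA$ with $q$ branching off $\C$ or starting at a vertex other than $1$ is largely dissolved by the paper's path-uniqueness argument: any nonzero composite $p_{[1,n]}A\to qA\to p_{[1,l]}A$ is induced by a path from $l$ to $n$ passing through $t(q)$, and the same perfect-pair trick used above pins this path to $\C$. Once you know the path stays on $\C$, the intermediate module is forced to be of the form $p_{[1,m]}A$, contradicting the hypothesis. So the combinatorics you anticipate are considerably lighter than you fear, and the paper's device gives you the shortcut.
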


\begin{proof}
Assume that there is an integer $m$  ($l<m<n$) such that $p_{[1, m]}A$ is a NTIG-projective module.
Let $s(\alpha_i)=i$ for all $1\le i\le n-1$.
Notice that
\begin{center}
$\M^{-1}(p_{[1,n]}A) = p_n^{-1} \cdot (\alpha_n\cdots\alpha_N) \notin \projcat(A)$

for some path $p_n$ starting at $n$ and $N\ge n$,
\end{center}
then we have
\begin{center}
$\M^{-1}(p_{[1,m]}A) = p_m^{-1} \cdot (\alpha_m\cdots\alpha_{n-1}) \cdot (\alpha_n\cdots\alpha_{N}) \notin \projcat (A)$

for some path $p_m$ starting at $m$,
\end{center}
and
\begin{center}
$\M^{-1}(p_{[1,l]}A) = p_l^{-1} \cdot (\alpha_l\cdots\alpha_{m-1}) \cdot (\alpha_m\cdots\alpha_{n-1}) \cdot (\alpha_n\cdots\alpha_{N}) \notin \projcat(A)$

for some path $p_l$ starting at $l$,
\end{center}
where $p_k = \beta_{k,1}\cdots\beta_{k,u_k}$ ($1\le k\le n$) a path of length $u_k$ starting at the
point $k\in\Q_0$.

Similar to the proof of Lemma \ref{lemm:Gproj(I)}, we obtain three homomorphisms
\begin{align}
  & \hbar_{[m,n]} \in \Hom_A(p_{[1,n]}A, p_{[1,m]}A), \nonumber \\
  & \hbar_{[l,m]}\in \Hom_A(p_{[1,m]}A, p_{[1,l]}A),  \nonumber \\ \ \text{and}\ \ \ \
  & \hbar_{[l,n]}\in \Hom_A(p_{[1,n]}A, p_{[1,l]}A) \nonumber
\end{align}
which are induced by paths $p_{[m,n]}$, $p_{[l,m]}$ and $p_{[l,n]}$, respectively.
It is easy to see that $\hbar_{[l,n]} = \hbar_{[l,m]}\hbar_{[m,n]}$, that is, $p_{[1,m]}A$ is a NTIG-projective module if and only if $\hbar_{[l,n]}$ is not irreducible in $\Gproj(A)$.

Next, we prove $\dim_{\kk}\Irr_{\Gproj(A)}(p_{[1,n]}A, p_{[1,l]}A) = 1$.
We know that all morphisms in $\Irr_{\Gproj(A)}(p_{[1,n]}A, p_{[1,l]}A)$ are induced by some paths on $(\Q, \I)$ whose starting points are $l$ and ending points are $n$.
Assume that there is a path $p' = \beta_1\cdots\beta_s$ ($s\ge 1$) with $s(p')=l$ and $t(p')=n$ such that $p' \ne p_{[l,n]}$.
Note that $l(p_{[1,n]}) > l(p_{1,l}) \ge 1$, we have $\beta_s\alpha_{n+1} = 0$ by the definition of string algebras.
Since $p_{[1,n]}A$ is a NTIG-projective module, there exists a path $p''$ starting at $t(p_{[1,n]} A)$ on $\C$ such that $(p_{[1,n]}, p'')$ is a perfect pair.
Thus, we obtain that $\beta_s p'' = 0$ by $\beta_s\alpha_{n+1} = 0$.
By the definition of perfect pair, $p_{[1,n]}$ is a subpath of $\beta_s$. We obtain a contradiction.
Thus there exists a unique path from the vertex $l$ to the vertex $n$, and then the statement follows.

\end{proof}

Dually, we have the following Lemma.

\begin{lemma} \label{lemm:pA-right}
Keep the notations from Lemma \ref{lemm:pA-left}.
If there are two NTIG-projective modules $p_{[l,\ell]}A$ and $p_{[n,\ell]}A$ over $A$ {\rm(}$l>n${\rm)},
then there exists a homomorphism $\hbar_{p_{[n,l]}}: p_{[l,\ell]}A \to p_{[n,\ell]}A$ induced by the path $p_{[n,l]}$ on $\C$.
Furthermore, if $p_{[m, \ell]}A$ is not G-projective for all $n<m<l$,
then \[\dim_{\kk}\Irr_{\Gproj(A)}(p_{[l,\ell]}A, p_{[n,\ell]}A) = 1. \]
\end{lemma}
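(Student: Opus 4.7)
The strategy is to mirror the proof of Lemma \ref{lemm:pA-left}, working at the starting side of the paths rather than the terminal side. Writing $p_{[k,\ell]} = \alpha_k \alpha_{k+1} \cdots \alpha_{\ell-1}$, I first compute $\M^{-1}(p_{[k,\ell]}A)$ for $k \in \{n, m, l\}$ as in the proof of Lemma \ref{lemm:Gproj(I)}: each is a non-projective string of the form $q_k \cdot (\alpha_k \cdots \alpha_{\ell-1}) \cdot \hat{q}_k^{-1}$ for some path $q_k$ ending at $k$ and $\hat{q}_k$ ending at $\ell$, determined by the PPS on $\C$. Using that $A$ is a string algebra and Proposition \ref{prop:perpaths}, the three strings share the common tail indexed by arrows from $l$ to $\ell-1$ of $\C$, so the string pictures embed into one another and provide homomorphisms
\[ \hbar_{[n,m]} : p_{[m,\ell]}A \to p_{[n,\ell]}A, \quad \hbar_{[m,l]} : p_{[l,\ell]}A \to p_{[m,\ell]}A, \quad \hbar_{[n,l]} : p_{[l,\ell]}A \to p_{[n,\ell]}A, \]
induced respectively by $p_{[n,m]}$, $p_{[m,l]}$, $p_{[n,l]}$ and satisfying $\hbar_{[n,l]} = \hbar_{[n,m]} \hbar_{[m,l]}$.

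This factorization immediately gives one direction: if some $n < m < l$ satisfies $p_{[m,\ell]}A \in \Gproj(A)$, then $\hbar_{[n,l]}$ is not irreducible in $\Gproj(A)$. Conversely, when no such $m$ exists, any putative factorization of $\hbar_{[n,l]}$ through a G-projective module must, after taking string decompositions, be one of the intermediate $p_{[m,\ell]}A$ or a projective module; the projective case is excluded by running the argument of Lemma \ref{lemm:Gproj(I)} verbatim (neither of the maps in the canonical decomposition \eqref{diagram in lemma:1-Gproj} passes through an extraneous indecomposable projective, now on the dual side). Hence $\hbar_{[n,l]}$ yields a nonzero element of $\Irr_{\Gproj(A)}(p_{[l,\ell]}A, p_{[n,\ell]}A)$.

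For the dimension statement, I must show $p_{[n,l]}$ is the unique path from $n$ to $l$ inducing a morphism $p_{[l,\ell]}A \to p_{[n,\ell]}A$. Suppose $p' = \beta_1 \cdots \beta_s$ is such a path with $p' \neq p_{[n,l]}$. Since $p_{[n,\ell]}A$ is NTIG-projective, there is a path $p''$ ending at $s(p_{[n,\ell]}) = n$ on $\C$ with $(p'', p_{[n,\ell]})$ a perfect pair. Then $\beta_1 \alpha_n$ must be nonzero for $p' \cdot (\alpha_n \cdots \alpha_{\ell-1})$ to descend to a morphism into $p_{[n,\ell]}A$; however, the string-algebra axiom allows at most one arrow $\beta$ ending at $n$ with $\beta \alpha_n \neq 0$, namely the last arrow of $p''$. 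By Proposition \ref{prop:perpaths} applied to the predecessor perfect pair, this forces $\beta_1 = \alpha_n$; iterating, $\beta_i = \alpha_{n+i-1}$, so $p' = p_{[n,l]}$, a contradiction.

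The main obstacle I anticipate is the uniqueness argument: the original Lemma \ref{lemm:pA-left} forbids an alternative \emph{last} arrow via the perfect pair succeeding $p_{[1,n]}$, whereas the dual requires the symmetric restriction on the \emph{first} arrow, which must be extracted from the perfect pair \emph{preceding} $p_{[n,\ell]}$ on $\C$. Once this predecessor perfect pair is invoked correctly — together with the exclusion of paths leaving $\C$, which follows from the definition of a string algebra — the remaining steps of the dual proof are completely parallel to those in Lemmas \ref{lemm:Gproj(I)} and \ref{lemm:pA-left}.
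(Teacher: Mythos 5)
Your overall strategy is exactly what the paper intends: the paper gives no proof of this lemma beyond the words ``Dually, we have the following Lemma,'' so dualizing the proof of Lemma \ref{lemm:pA-left} is the right move, and your factorization $\hbar_{[n,l]}=\hbar_{[n,m]}\hbar_{[m,l]}$ together with the exclusion of factorizations through projectives matches the (equally sketchy) treatment in Lemmas \ref{lemm:Gproj(I)} and \ref{lemm:pA-left}.

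The problem is in the uniqueness step, which you yourself identify as the crux but then execute incorrectly. With $p'=\beta_1\cdots\beta_s$ a path from $n$ to $l$, the arrow $\beta_1$ \emph{starts} at $n$, and $\alpha_n$ also starts at $n$, so the product $\beta_1\alpha_n$ is not composable; likewise $p'\cdot(\alpha_n\cdots\alpha_{\ell-1})$ is ill-formed since $t(p')=l\neq n$. Moreover the clause of the string axiom you invoke (``at most one arrow $\beta$ \emph{ending} at $n$ with $\beta\alpha_n\neq 0$'') constrains arrows ending at $n$ and therefore says nothing about $\beta_1$; and its alleged witness, the last arrow $\delta$ of $p''$, need not satisfy $\delta\alpha_n\neq0$ without further argument. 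The correct dual of the paper's step is: since $\ell(p_{[n,\ell]})>\ell(p_{[l,\ell]})\geq 1$, the generator $p''p_{[n,\ell]}$ of $\I$ has length at least $3$, so $\delta\alpha_n\neq 0$; hence if $\beta_1\neq\alpha_n$ the string axiom (applied to arrows \emph{starting} at $n$ composing nontrivially with $\delta$) gives $\delta\beta_1=0$, so $p''\beta_1=0$, and condition (ii) of the perfect pair $(p'',p_{[n,\ell]})$ forces $p_{[n,\ell]}$, a path of length $\geq 2$, to be a subpath of the single arrow $\beta_1$ --- a contradiction. This yields $\beta_1=\alpha_n$, and the subsequent arrows are then forced one at a time because each $\alpha_{n+i-1}\alpha_{n+i}$ is a nonzero subpath of $p_{[n,\ell]}$ and the string axiom permits only one nonzero continuation. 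With that repair the argument goes through; as written, the step does not prove what it claims.
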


By Lemmas \ref{lemm:Gproj(I)}, \ref{lemm:pA-left} and \ref{lemm:pA-right}, we obtain the following lemma.

\begin{lemma}\label{lemm:Gproj-length=1}
Keep the notations in Lemma \ref{lemm:Gproj(I)}. If $\ell(p)=1$, then
\begin{align}\label{formula in prop:Gproj-length=1}
\dim_{\kk}\Irr_{\Gproj(A)}(P(t(p)), G) = 1 = \dim_{\kk}\Irr_{\Gproj(A)}(G, P(s(p))).
\end{align}
\end{lemma}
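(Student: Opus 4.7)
The plan is to assemble the result directly from the three preceding lemmas rather than to reprove any of them. Lemma \ref{lemm:Gproj(I)} already supplies the canonical factorization $\hbar_p = f_1 \circ f_2$ with $f_2 \colon P(t(p)) \twoheadrightarrow G$ and $f_1 \colon G \hookrightarrow P(s(p))$, and guarantees that neither $f_1$ nor $f_2$ factors through an indecomposable projective module distinct from $P(t(p))$ and $P(s(p))$. So existence of nonzero elements in both Irr-spaces in (\ref{formula in prop:Gproj-length=1}) is immediate, and the real content of the lemma is (i) irreducibility of $f_1$ and $f_2$ in $\Gproj(A)$ and (ii) the dimension count.

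First I would establish the irreducibility of $f_2$. Suppose, for contradiction, that $f_2 = g \circ h$ in $\Gproj(A)$ with $h \colon P(t(p)) \to X$ not a split monomorphism and $g \colon X \to G$ not a split epimorphism, for some $X \in \ind(\Gproj A)$. If $X$ is projective, Lemma \ref{lemm:Gproj(I)} yields a contradiction, so $X \cong qA$ is NTIG-projective for some perfect path $q$ on a relation cycle $\C'$. Any morphism between string modules is a $\kk$-linear combination of path-induced maps, so write $h = \hbar_{q_1}$ and $g = \hbar_{q_2}$ with $q_1 q_2$ (as a path in $A$) equal to the arrow $p$ up to the quotient to $pA$. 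Because $\ell(p) = 1$, at least one of $q_1, q_2$ must be trivial; an analysis using Proposition \ref{prop:perpaths} and Lemmas \ref{lemm:pA-left} and \ref{lemm:pA-right} then forces $qA \cong G$ or $X$ to be projective, contradicting the assumption. The argument for $f_1$ is dual.

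Next I would prove the dimension estimates. By the classification of irreducible morphisms between string modules recalled before Lemma \ref{lemm:Gproj(I)}, every irreducible map $P(t(p)) \to G$ in $\modcat A$ is, up to scalar, induced by adding or removing a hook or cohook at an endpoint of the underlying string. Using Lemma \ref{lemm:per.path} and the fact that $\ell(p) = 1$ forces $\ell(\wp_2) = 1 = \ell(\wp_{t-1})$ whenever the relevant extra arrows exist, one checks that the only such modification whose target is $G = pA$ and whose corresponding map survives as a nonzero morphism in $\Gproj(A)$ is the one encoded by the arrow $p$, giving $f_2$ up to scalar. The same argument, reversed, yields uniqueness of $f_1$, and (\ref{formula in prop:Gproj-length=1}) follows.

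The main obstacle is the irreducibility step: one must exclude factorizations of $f_1$ and $f_2$ through an NTIG-projective middle term $qA$ different from $G$, which is \emph{not} covered by Lemma \ref{lemm:Gproj(I)} (that lemma only rules out projective middle terms). Concretely, this reduces to showing that a perfect path of length one admits no nontrivial refinement by a perfect path on any relation cycle through $s(p)$ or $t(p)$, which is precisely where the hypothesis $\ell(p)=1$ is used in an essential way, in combination with Lemmas \ref{lemm:pA-left} and \ref{lemm:pA-right}.
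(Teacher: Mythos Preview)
Your approach matches the paper's: the paper gives no explicit proof and simply declares that the lemma follows from Lemmas \ref{lemm:Gproj(I)}, \ref{lemm:pA-left} and \ref{lemm:pA-right}. You are therefore filling in exactly the assembly the authors leave implicit, and your identification of the two ingredients (existence via Lemma \ref{lemm:Gproj(I)}, uniqueness and irreducibility via Lemmas \ref{lemm:pA-left}/\ref{lemm:pA-right} together with $\ell(p)=1$) is the intended one.

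One small comment on your irreducibility step: the exclusion of an NTIG-projective middle term $qA$ can be made cleaner than the $q_1q_2$-factorisation you sketch. Since $\ell(p)=1$, any perfect path $q$ with $s(q)=s(p)$ has $\ell(q)\ge \ell(p)$, so by Lemma \ref{lemm:pA-left} the induced morphism goes from $qA$ to $pA$, not the other way; dually, Lemma \ref{lemm:pA-right} forces any $q$ with $t(q)=t(p)$ to satisfy $\ell(q)\ge \ell(p)$ with the map going $qA\to pA$ in the wrong direction for a factorisation of $f_2$. This immediately rules out a nontrivial NTIG middle term without needing to analyse the decomposition $h=\hbar_{q_1}$, $g=\hbar_{q_2}$ arrow by arrow. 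With that streamlining your argument is complete and faithful to the paper's intent.
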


A finite dimensional $\kk$-algebra $A$ is called {\defines CM-finite} if $\ind(\Gproj(A))$ contains at most a finite number of indecomposable G-projective modules (up to isomorphism).
$A$ is called {\defines CM-free} if $\Gproj(A) = \projcat(A)$, see \cite{B2011}.

\begin{corollary} \label{coro:CM-fin/free}
Let $A=\kk\Q/\I$ be a string algebra, then the following statements hold.
\begin{itemize}
  \item[{\rm(1)}] $A$ is CM-finite.

  \item[{\rm(2)}] $A$ is CM-free if and only if there exists no PPS on the bounded quiver $(\Q, \I)$.
\end{itemize}
\end{corollary}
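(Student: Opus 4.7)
The plan is to extract both claims directly from Theorem \ref{thm-Chen} (Chen's classification), together with the finiteness built into the definition of a string algebra.

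For part (1), I would argue as follows. Every indecomposable G-projective is either projective or non-trivial. The indecomposable projectives are in bijection with $\Q_0$, which is finite since $A$ is finite-dimensional. By Theorem \ref{thm-Chen}, the indecomposable non-trivial G-projectives are in bijection with $\perpath(\Q,\I)$. Every perfect path is in particular a non-zero path in $\kk\Q/\I$, and since $\I$ is admissible and $\Q$ is finite, the set of non-zero paths is finite. Hence $\perpath(\Q,\I)$ is finite, so $\ind(\Gproj(A))$ is finite, i.e.\ $A$ is CM-finite.

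For part (2), the implication is again essentially immediate from Theorem \ref{thm-Chen}. By definition, $A$ is CM-free iff $\Gproj(A)=\projcat(A)$, equivalently iff there is no NTIG-projective module. Using the bijection $\perpath(\Q,\I)\to\NTIGproj(A)$, this is equivalent to $\perpath(\Q,\I)=\emptyset$. Finally, a perfect path exists iff it fits into a PPS (by the very definition of a perfect path as a member of some PPS), so $\perpath(\Q,\I)=\emptyset$ iff there is no PPS on $(\Q,\I)$. Combining these equivalences yields the claim.

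Since both statements are direct applications of the already-established bijection between perfect paths and NTIG-projective modules, there is no real obstacle to overcome; the only point that deserves a sentence is the justification that $\perpath(\Q,\I)$ is finite, which rests on admissibility of $\I$ and finiteness of $\Q$. No induction, construction, or case analysis is needed, and the proof can be written in a few lines.
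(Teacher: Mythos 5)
Your proposal is correct and follows essentially the same route as the paper: both parts are read off from Theorem \ref{thm-Chen} together with the finiteness of the set of nonzero paths in $\kk\Q/\I$ (the paper bounds the number of indecomposable G-projectives by the cardinality of the finite set of paths, and dismisses (2) as an immediate consequence of the bijection). Your write-up merely makes the same argument slightly more explicit, so there is nothing to flag.
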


\begin{proof}
(1) Since string algebra $A = \sum\limits_{\ell=0}^{\infty}\sum\limits_{\mathfrak{p}\in\Q_{\ell}} \kk \mathfrak{p}$ is finite dimensional ($\Q_{\ell}$ is the set of all paths of length $\ell$),
$\Q_{\ge 0} := \bigcup\limits_{\ell=1}^{\infty}\Q_{\ell}$ is a finite set.
On the other hand, by Theorem \ref{thm-Chen}, any NTIG-projective module is isomorphic to $pA$, where $p$ is a perfect path.
Thus, the number of indecomposable G-projective modules (up to isomorphism) less than or equal to the number of elements in $\Q_{\ge 0}$. Thus $A$ is CM-finite.

(2) This is a direct corollary of Theorem \ref{thm-Chen}.
\end{proof}

\subsection{The Cohen-Macaulay Auslander algebras of string algebras}

The {\defines Cohen-Macaulay Auslander algebra} ($=$ CM-Auslander algebra) $B$ of $A$
is the endomorphism algebra of the direct sum of all G-projective $A$-modules which are pairwise not isomorphic, i.e.,
\[ B = \End_A\Big(\bigoplus_{G\in \ind(\Gproj(A))} G\Big) \]
(see \cite[Section 6.2]{B2011}).

In this subsection, we calculate the CM-Auslander algebra of string algebra.
Obviously, the CM-Auslander algebra of a CM-finite algebra is finite dimensional.
Thus, by  Corollary \ref{coro:CM-fin/free}, the CM-Auslander algebra of any string algebra is finite dimensional.

\begin{construction} \rm \label{const:CMA}
Let $A=\kk\Q/I$ with $\Q=(\Q_0, \Q_1)$ be a string algebra and $\scrG^l(A)$ be the set of all NTIG-projective modules which are of the form $pA$, where $p$ is a path of length $l$.
Thus, $\ind(\Gproj(A)) = \bigcup_{l\ge 1} \scrG^l(A)$,
and, for any $pA\in \scrG^l(A)\ne \varnothing$, there exists a relation cycle $\C$ such that $p$ is a path on $\C$ by Lemma \ref{lemm:per.path}.
Define $A^{\CMA} = \kk\Q^{\CMA}/\I^{\CMA}$ whose bounded quiver $(\Q^{\CMA}, \I^{\CMA})$ is constructed by following steps.
\begin{itemize}
  \item[Step 1.]
    Let $\Q^{\CMA}_0 = \Q_0'\cup\Q_0$ with a bijective $\frakv: \ind(\Gproj(A)) \to \Q_0'\cup\Q_0$ sending any indecomposable projective module $P(v)$ to the vertex $v\in \Q_0$, and sending a NTIG-projective module to a vertex in $Q'_0$.

  \item[Step 2.]
    Let \[\Q_1^0 = \Q_1\backslash\{\alpha\in\Q_1 \mid \alpha A\in \scrG^1(A)\}, \]
    and $\Q^1_1$ be the set which is obtained by adding arrows $\alpha^-: s(\alpha) \to \frakv(\alpha A)$ and $\alpha^+: \frakv(\alpha A) \to t(\alpha)$ to $\Q_1^0$ for every $\alpha A \in \scrG^1(A)$.
    In this case, we have $\Q_1^1\supseteq \Q_1^0$ (thus $\Q_1^1 = \Q_1^0 \cup \Q_1^1$).

  \item[Step 3.]
    We define the set $\Q_1^{l}$ by adding following arrows to $\Q_1^{l-1}$. For every $pA \in \scrG^l(A)$,
    \begin{itemize}
      \item  if there is a NTIG-projective module $qA \in \scrG^m(A)$ with $m<l$ such that $s(p)=s(q)$ (resp. $t(p)=t(q)$),
        and there is no NTIG-projective module $rA\in \scrG^n(A)$ with $m < n < l$ such that $s(p)=s(r)$ (resp. $t(p)=t(r)$),
        we adding an arrow $a_{qp}: \frakv(qA) \to \frakv(pA)$ (resp. $a_{pq}: \frakv(pA) \to \frakv(qA)$) to $\Q_1^{l-1}$;
      \item otherwise, we adding a pair of arrows $a_{s(p)p}: s(p) \to \frakv(pA)$
     and  $a_{pt(p)}: \frakv(pA) \to t(p)$ to $\Q_1^{l-1}$.
    \end{itemize}

  \item[Step 4.]
    Let $\Q^{\CMA}_1 = \bigcup_{i\ge 0} \Q_1^i$ and $\Q^{\CMA}$ be the quiver $(\Q^{\CMA}_0, \Q^{\CMA}_1, s, t)$
    with functions $s$ and $t$ sending any arrow in $\Q^{\CMA}_1$ to its starting point and ending point in $\Q^{\CMA}_0$, respectively.

  \item[Step 5.]
   The generators of $\I^{\CMA}$ are given by following three types.
  \begin{itemize}
    \item[($\mathcal{R}$1)]
     For any path $\wp = a_1a_2\cdots a_t$ of length $t$ on $\Q$ which is a generator of $\I$,
     we have $\wp^{*} = a^{*}_1a^{*}_2\cdots a^{*}_t \in\I^{\CMA}$, where
     \[a^{*}_i =
     \begin{cases}
     a_i^{-}a_i^{+}\ \big( {\text{which is a path of} \atop \text{length two on $\Q^{\CMA}$}}\big),
     &\ \text{if}\ a_i \notin \Q^{\CMA}_1 \ \text{and}\ 1 < i < t; \\
     a_1^+, & \ \text{if}\ i=1 \ \text{and}\ a_1\notin \Q^{\CMA}_1; \\
     a_t^-, & \ \text{if}\ i=t \ \text{and}\ a_t\notin \Q^{\CMA}_1; \\
     a_i\ (\in \Q_1), & \ \text{otherwise}. \\
     \end{cases}\]

    \item[($\mathcal{R}$2)]
     Any basic cycle of $\Q^{\CMA}$ with at least one vertex corresponding to NTIG-projective module is commutative.
     That is, for any cycle in $\Q^{\CMA}$ which is one of the following forms
     \begin{center}
       $\xymatrix{
       (\text{i}) & & \frakv(pA) \ar[rd]^{a_{pv_l}} & \\
       & v_0 \ar@{~>}[rr]^{p}_{(\text{a path of length}\ \ge 2)} \ar[ru]^{a_{v_0p}} & & v_l, }$
       $\xymatrix@C=0.5cm{
       (\text{ii}) & & \frakv(qA) \ar[d]_{a_{pv_{m}}} \ar[r]^{a_{qp}}  & \frakv(pA) \ar[d]^{a_{qv_{l}}} \\
       & v_{0} \ar@{~>}[r]^p & v_{m} \ar@{~>}[r]^{p'}_{(pp'=q)} & v_{l}, }$
\\
%
       $\xymatrix@C=0.5cm{
       (\text{iii}) & \frakv(pA) \ar[rr]^{a_{pq}} \ar[d]_{a_{pv}} & & \frakv(qA) \ar[d]^{a_{qr}} \\
        & v \ar[rr]^{a_{vr}} & & \frakv(rA), }$
\ \ \ \
       $\xymatrix@C=0.5cm{
        (\text{iv}) & \frakv(qA) \ar[rr]^{a_{qr}} \ar[d]_{a_{qp}} & & \frakv(rA) \ar[d]^{a_{rs}} \\
        & \frakv(pA)  \ar[rr]^{a_{ps}} & & \frakv(sA), }$
     \end{center}
     the combinations $p - a_{v_{0}p}a_{pv_{l}}$, $a_{pv_{m}}p' - a_{pq}a_{qv_{l}}$,
     $a_{pv}a_{vr} - a_{pq}a_{qr}$ and $a_{qp}a_{ps} - a_{qr}a_{rs}$ in the corresponding diagram are generators in $\I^{\CMA}$.

    \item[($\mathcal{R}$3)]
      For a sequence of arrows
      \[ s(p_1) \mathop{-\!\!\!-\!\!\!\longrightarrow}\limits^{a_{s(p_1)p_1}} \frakv(p_1A)
       \mathop{-\!\!\!-\!\!\!\longrightarrow}\limits^{a_{p_1p_2}} \frakv(p_2A)
        \longrightarrow \cdots
       \mathop{-\!\!\!-\!\!\!\longrightarrow}\limits^{a_{p_{m-1}p_m}} \frakv(p_mA)
       \mathop{-\!\!\!-\!\!\!\longrightarrow}\limits^{a_{p_{m}t(p_m)}} t(p_m) \]
      in $\Q^{\CMA}_1$, if $p_1$, $p_2$, $\cdots$, $p_m$ are paths on $(\Q, \I)$ such that $p_1 \sqcup p_2 \sqcup \cdots \sqcup p_m \in \I$,
      then $a_{s(p_1)p_1}a_{p_1p_2} \cdots a_{p_{m-1}p_m} a_{p_mt(p_m)} \in \I^{\CMA}$.
  \end{itemize}
\end{itemize}
\end{construction}

\begin{theorem} \label{thm:the CMA of string}
Let $A$ be a string algebra, then its CM-Auslander algebra is isomorphic to $A^{\CMA}$.
\end{theorem}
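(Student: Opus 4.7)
The goal is to exhibit an isomorphism $B\cong \kk\Q^{\CMA}/\I^{\CMA}$. Since $A$ is CM-finite by Corollary \ref{coro:CM-fin/free}, $B$ is basic and finite-dimensional, so it suffices to identify the Gabriel quiver with relations of $B$ with $(\Q^{\CMA},\I^{\CMA})$. The vertex bijection $\frakv$ of Construction \ref{const:CMA}, Step~1 is forced by Theorem \ref{thm-Chen}: indecomposable projectives give the original vertex set $\Q_0$, and NTIG-projectives $pA$ are parametrized by perfect paths $p$, filling $\Q_0'$.

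The next step is to match the arrows of $\Q^{\CMA}$ with a basis of $\Irr_{\Gproj(A)}(G,G')$. I would classify irreducible morphisms in $\Gproj(A)$ by source/target type. \emph{Projective to projective}: each $\alpha\in\Q_1$ induces $\hbar_\alpha\colon P(t(\alpha))\to P(s(\alpha))$, which remains irreducible in $\Gproj(A)$ precisely when $\alpha A\notin\scrG^1(A)$, since otherwise Lemma \ref{lemm:Gproj(I)} produces a factorization through $\alpha A$; this accounts for $\Q_1^0$. \emph{Projective to NTIG-projective in $\scrG^1(A)$ and vice versa}: Lemma \ref{lemm:Gproj-length=1} gives exactly the two irreducible morphisms $\alpha^-,\alpha^+$ added in Step~2. \emph{NTIG to NTIG on the same relation cycle}: Lemmas \ref{lemm:pA-left} and \ref{lemm:pA-right} produce one-dimensional irreducible spaces between consecutive (in length) NTIG-projectives sharing a source or target, realizing the arrows $a_{qp},a_{pq}$ of Step~3. \emph{Between a projective $P(v)$ and a longer NTIG-projective $pA$}: Lemma \ref{lemm:Gproj(I)} yields irreducible morphisms $a_{s(p)p},a_{pt(p)}$ whenever no smaller NTIG-projective sharing the corresponding endpoint intervenes. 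These four cases exhaust $\Q^{\CMA}_1$.

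The three families of relations then have the following meaning in $B$. $(\mathcal{R}1)$ is the lift of each monomial relation $\wp\in\I$: the replacement $a_i\mapsto a_i^-a_i^+$ for $a_iA\in\scrG^1(A)$ encodes exactly the factorization $\hbar_{a_i}=f_1 f_2$ of Lemma \ref{lemm:Gproj(I)}, so $\wp^*=\hbar_\wp=0$ in $B$. $(\mathcal{R}2)$ encodes, in $B$, the equality between $\hbar_p$ viewed as a path in $A$ and the same morphism factored through intermediate NTIG-projective vertices; the four diagrams (i)--(iv) correspond to the basic factorization of Lemma \ref{lemm:Gproj(I)} and its compatible iterates when further NTIG-projectives of intermediate length lie on the cycle. $(\mathcal{R}3)$ is the NTIG analogue of $(\mathcal{R}1)$: when $p_1\sqcup\cdots\sqcup p_m\in\I$, the corresponding composition through NTIG-vertices equals $\hbar_{p_1\sqcup\cdots\sqcup p_m}=0$ in $B$. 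Taken together these produce a well-defined surjective algebra homomorphism $\kk\Q^{\CMA}/\I^{\CMA}\twoheadrightarrow B$.

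The main obstacle is injectivity, i.e.\ showing that $(\mathcal{R}1)$--$(\mathcal{R}3)$ already generate every relation among the chosen irreducible morphisms. The cleanest route is dimension counting: for each ordered pair $(G,G')$ of indecomposable G-projectives, one computes $\dim_\kk\Hom_A(G,G')$ via the string-module description of Theorem \ref{thm:indmod}, using that $\Hom_A(pA,qA)$ is spanned by paths between the relevant endpoints on the underlying relation cycle that are compatible with the perfect-pair structure, and then compares this with the explicit count of residue classes of paths in $e_{\frakv(G')}(\kk\Q^{\CMA}/\I^{\CMA})e_{\frakv(G)}$ read directly from the construction. The string-algebra hypothesis---each vertex being head and tail of at most two arrows, with tightly controlled zero relations---keeps this combinatorial comparison tractable, and equality of dimensions for all pairs closes the argument.
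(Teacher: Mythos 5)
Your proposal follows essentially the same route as the paper: the quiver of the CM-Auslander algebra is identified via Lemmas \ref{lemm:Gproj(I)}, \ref{lemm:pA-left}, \ref{lemm:pA-right} and \ref{lemm:Gproj-length=1} exactly as in your case analysis of irreducible morphisms in $\Gproj(A)$, and the relations $(\mathcal{R}1)$--$(\mathcal{R}3)$ are verified to hold in $\End_A(\bigoplus G)$ by the same factorization arguments. The one point where you go beyond the paper is in explicitly flagging the completeness of the relations (injectivity of the surjection $\kk\Q^{\CMA}/\I^{\CMA}\twoheadrightarrow B$) and sketching a dimension count to close it; the paper leaves this step implicit, so your outline is, if anything, the more careful of the two, though the count itself is only sketched rather than carried out.
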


\begin{proof}
Assume $A = \kk\Q^A/\I^A$ and its CM-Auslander algebra is $B = \kk\Q^B/\I^B$, where $\I^B$ is admissible.
By Lemmas \ref{lemm:Gproj(I)}, \ref{lemm:pA-left}, \ref{lemm:pA-right} and \ref{lemm:Gproj-length=1},
and Construction \ref{const:CMA}, we have $\Q^B=\Q^{\CMA}$.

Next, we calculate $\I^B$. Indeed, the generators of $\I^B$ are divided to two classes which are naturally provided by generators of $\I$ and NTIG-projective modules, respectively:
\begin{itemize}
  \item the generators of $\I$ provide a part of relations in $\I^B$ (see (1));

  \item all the others is given by NTIG-projective modules (see (2) and (3)).
\end{itemize}

(1) For any generator $\wp = a_1a_2\cdots a_t$ of length $t$ of $\I^A$,
$a_i$ corresponds to a homomorphism $\hbar_{a_i}: P(t(a_i)) \to P(s(a_i))$ in $\modcat A$.
In $\Gproj(A)$, by Lemma \ref{lemm:Gproj-length=1}, it is not irreducible if and only if $a_iA$ is a NTIG-projective module.
If $\ind(\Gproj(A))$ contains $a_iA$, and $2 < i < t$, then $a_i\notin\Q^{\CMA}_1$ and $a_i^-, a_i^+\in\Q^{\CMA}_1$.
In this case, $\hbar_{a_i}$ has a decomposation
\[\xymatrix{
   P(t(a_i)) \ar[rd]_{\hbar_2} \ar[rr]^{\hbar_{a_i}} & & P(s(a_i)), \\
  & a_iA \ar@{^(->}[ru]_{\hbar_1} &
}\]
where, $\hbar_1$ and $\hbar_2$ are irreducible morphism in $\Gproj(A)$ correspond to $a_{a_is(a_i)} = a_i^{-}$ and $a_{t(a_i)a_i} = a_i^{+}$, respectively.
Define $a^{*}_i =  a_i^{-}a_i^{+}$, if $a_i\notin \Q^{\CMA}_1$ and $1<i<t$;
Otherwise, $a^{*}_i = a_i$. Then $\wp^{*} = a^{*}_1a^{*}_2 \cdots a^{*}_{t-1} a^{*}_t = 0$ on $(\Q^A, \I^A)$,
where
\[a^{*}_1 \ \text{(resp. }\ a^{*}_t \text{)} = \begin{cases}
   a_1^+ \ \text{(resp. }\  a_t^-\text{)},
 & \text{if}\ a_1 \notin \Q^{\CMA}_1 \ \text{(resp. }\ a_t \notin \Q^{\CMA}_1 \text{);} \\
  a_1 \ \text{(resp. }\ a_t \text{)}, & \text{otherwise}.
\end{cases}\]
Thus, it is trivial that $\wp^{*} = 0$ in $(\Q^B, \I^B)$.

(2) Any basic cycle of $\Q^B$ with at least one vertex corresponding to NTIG-projective module is one of four forms (i)-(iv) provided in Construction \ref{const:CMA} Step 5 (2).
Its commutativity is given by the composition of paths.
To be more precise, the commutativity of (i) can be proved by Lemma \ref{lemm:Gproj(I)};
the commutativity of (ii), (iii) and (iv) can be proved by Lemma \ref{lemm:pA-left} or Lemma \ref{lemm:pA-right}.

(3) Let
      \[ s(p_1) \mathop{-\!\!\!-\!\!\!\longrightarrow}\limits^{a_{s(p_1)p_1}} \frakv(p_1A)
       \mathop{-\!\!\!-\!\!\!\longrightarrow}\limits^{a_{p_1p_2}} \frakv(p_2A)
        \longrightarrow \cdots
       \mathop{-\!\!\!-\!\!\!\longrightarrow}\limits^{a_{p_{m-1}p_m}} \frakv(p_mA)
       \mathop{-\!\!\!-\!\!\!\longrightarrow}\limits^{a_{p_{m}t(p_m)}} t(p_m) \]
be a a sequence of arrows in $\Q^{\CMA}_1$ such that $p_1 \sqcup \cdots \sqcup p_m$ is a generator of $\I$,
where $p_i = \alpha_{i,1}\cdots \alpha_{i,\ell_i}$ is a path of length $\ell_i$ ($1\le i\le m$).
We have
\begin{align} \label{formula-1 in thm:the CMA of string}
p_i \sqcap p_{i+1} = \alpha_{i,j_i}\alpha_{i,j_i+1}\cdots \alpha_{i,\ell_i} = \alpha_{i+1,1}\cdots \alpha_{i+1,\ell_i-j_i+1}
\ (1\le j_i\le \ell_i),
\end{align}
or
\begin{align} \label{formula-2 in thm:the CMA of string}
p_i \sqcap p_{i+1} = e_{t(p_i)} = e_{s(p_{i+1})}.
\end{align}
Let $\hbar_{\star}$ be the irreducible morphism in $\Gproj(A)$ corresponding to $a_{\star}$,
where $\star \in \{s(p_1)p_1,$ $ p_1p_2, \cdots, p_{m-1}p_m, p_mt(p_m)\}$.
If (\ref{formula-1 in thm:the CMA of string}) (resp.  (\ref{formula-2 in thm:the CMA of string})) holds,
then $\hbar_{p_{i}p_{i+1}}$ is the morphism induced by the path
\begin{center}
$\tilde{p}_i = \alpha_{i,1}\cdots \alpha_{i,j_i-1}$ (resp.  $\tilde{p}_i = \alpha_{i,1}\cdots \alpha_{i, \ell_i} = p_{i}$),
where $1\le i\le m-1$.
\end{center}
Otherwise, $\hbar_{s(p_{1})p_{1}}$ and $\hbar_{p_{m}t(p_{m})}$ are the morphisms induced by the path $\tilde{p}_0 = e_{s(p_1)}$ and $\tilde{p}_m = p_m$, respectively.
Thus,
\[\hbar = \hbar_{p_mt(p_m)}\hbar_{p_{m-1}p_m}\cdots\hbar_{p_1p_2}\hbar_{s(p_1)p_1}
\in \Hom_A(e_{t(p_m)}A, e_{s(p_1)}A)\]
is the morphism induced by the path
\begin{align}
  & \tilde{p}_0\tilde{p}_1\cdots \tilde{p}_{m-1}\tilde{p}_m \nonumber \\
= & e_{s(p_1)} \cdot \Big( \prod_{i=1}^{m-1} \alpha_{i,1}\cdots \alpha_{i,j_i-1} \Big)
    \cdot (\alpha_{m,1}\cdots \alpha_{m,\ell_m}) \ \  \text{(Note that $\alpha_{i,j_i} = \alpha_{i+1,1}$)} \nonumber \\
=  & p_1\sqcup p_2 \sqcup \cdots \sqcup p_m \in \I.  \nonumber
\end{align}
We obtain $\hbar = 0$, equivalently, $a_{s(p_1)p_1}a_{p_1p_2}\cdots a_{p_{m-1}p_m}a_{p_mt(p_m)}$ is a generator of $\I^{\CMA}$.
\end{proof}

\begin{example} \rm \label{ex:CMA}
Let $A$ be the string algebra given in Example \ref{ex:1}.
There are eight NTIG-projective modules over $A$:
\begin{align}
 & abc A \cong S(4) = (4), && dA \cong  \left(\begin{smallmatrix} 5 \\ 6 \\ 7 \end{smallmatrix}\right),
&& efg A \cong S(8) = (8), && hA \cong \left(\begin{smallmatrix} 1 \\ 2 \\ 3 \end{smallmatrix}\right), \nonumber \\
 & ab A \cong \left({^3_4}\right), && cd A \cong \left({^5_6}\right),
&& ef A \cong \left({^7_8}\right), && gh A \cong \left({^1_2}\right).  \nonumber
\end{align}
The CM-Auslander algebra of $A$ is $A^{\CMA} = \kk\Q^{\CMA}/\I^{\CMA}$, where $\Q^{\CMA}$ is shown in \Pic \ref{fig:CMA of A in ex} and
\begin{align}
\I^{\CMA} = & \langle abcd^{-}, cd^{-}d^{+}ef, efgh^{-}, gh^{-}h^{+}ab, h^{+}abc, d^{+}efg, ax, ey, \nonumber \\
& \beta_1\alpha_2, \beta_2d^{-}\alpha_3, \beta_3\alpha_4, \beta_4h^{+}\alpha_1,
 h^{+}\alpha_1\gamma_1, \delta_1d^{-}, d^{+}\alpha_3\gamma_2, \delta_2h^{-}, \nonumber \\
& \alpha_1\beta_1-ab, \alpha_2\beta_2-cd^{-}, \alpha_3\beta_3-ef, \alpha_4\beta_4-gh^{-}, \nonumber \\
& \gamma_1\delta_1-\beta_1 c, \gamma_2\delta_2 - \beta_3 g \rangle. \nonumber
\end{align}
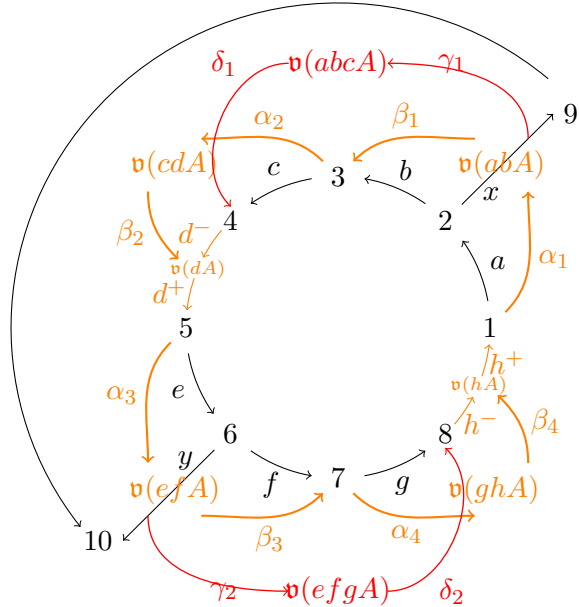
\begin{figure}[htbp]
\centering
\begin{tikzpicture}
\draw[->][rotate=  0+10] (2,0) arc (0:25:2);
\draw[->][rotate= 45+10] (2,0) arc (0:25:2);
\draw[->][rotate= 90+10] (2,0) arc (0:25:2);
\draw[orange][->][rotate=135+5] (2,0) arc (0:12.5:2);
\draw[orange][->][rotate=135+26] (2,0) arc (0:12.5:2);
\draw[->][rotate=180+10] (2,0) arc (0:25:2);
\draw[->][rotate=225+10] (2,0) arc (0:25:2);
\draw[->][rotate=270+10] (2,0) arc (0:25:2);
\draw[orange][->][rotate=315+5] (2,0) arc (0:12.5:2);
\draw[orange][->][rotate=315+26] (2,0) arc (0:12.5:2);
\draw[rotate=  0] (2,0) node{$1$}; \draw[rotate=  0] (2.1,0.85) node{$a$};
\draw[rotate= 45] (2,0) node{$2$}; \draw[rotate= 45] (2.1,0.85) node{$b$};
\draw[rotate= 90] (2,0) node{$3$}; \draw[rotate= 90] (2.1,0.85) node{$c$};
\draw[rotate=135] (2,0) node{$4$}; 
\draw[orange][rotate=135+22.5] (2,0) node{\tiny $\frakv(dA)$};
\draw[orange][rotate=135-11] (2.1,0.85) node{$d^{-}$};
\draw[orange][rotate=135+11] (2.1,0.85) node{$d^{+}$};
\draw[rotate=180] (2,0) node{$5$}; \draw[rotate=180] (2.1,0.85) node{$e$};
\draw[rotate=225] (2,0) node{$6$}; \draw[rotate=225] (2.1,0.85) node{$f$};
\draw[rotate=270] (2,0) node{$7$}; \draw[rotate=270] (2.1,0.85) node{$g$};
\draw[rotate=315] (2,0) node{$8$}; 
\draw[orange][rotate=315+22.5] (2,0) node{\tiny $\frakv(hA)$};
\draw[orange][rotate=315-11] (2.1,0.85) node{$h^{-}$};
\draw[orange][rotate=315+11] (2.1,0.85) node{$h^{+}$};
\draw[rotate= 45][->] (2.3,0) -- (4,0) node[right]{$9$};
\draw[rotate= 45] (2.5,0) node[right]{$x$};
\draw[rotate= 45+180][->] (2.3,0) -- (4,0) node[left]{$10$};
\draw[rotate= 45+180] (2.5,0) node[left]{$y$};
\draw[rotate= 45+5][->] (4.3,0) arc (0:168:4.3);
\draw[orange][->][line width=0.8pt] (2.2,0.2) to[out=45, in=-90](2.5,1.8);
\draw[orange] (2.5,0.9) node[right]{$\alpha_1$};
\draw[orange][->][line width=0.8pt] (1.8,2.5) to[out=180,in=45] (0.2,2.2);
\draw[orange] (0.9,2.5) node[above]{$\beta_1$};
\draw[orange] (2.15,2.15) node{$\frakv(abA)$};
\draw[orange][->][line width=0.8pt] (-0.2,2.2) to[out=135,in=  0] (-1.8,2.5);
\draw[orange](-0.9,2.5) node[above]{$\alpha_2$};
\draw[orange][->][line width=0.8pt] (-2.5,1.8) to[out=-90,in=135] (-2.1,0.9);
\draw[orange] (-2.4,1.25) node[left]{$\beta_2$};
\draw[orange] (-2.15,2.15) node{$\frakv(cdA)$};
\draw[rotate=180][orange][->][line width=0.8pt] (2.2,0.2) to[out=45, in=-90](2.5,1.8);
\draw[rotate=180][orange] (2.5,0.9) node[left]{$\alpha_3$};
\draw[rotate=180][orange][->][line width=0.8pt] (1.8,2.5) to[out=180,in=45] (0.2,2.2);
\draw[rotate=180][orange] (0.9,2.5) node[below]{$\beta_3$};
\draw[rotate=180][orange] (2.15,2.15) node{$\frakv(efA)$};
\draw[rotate=180][orange][->][line width=0.8pt] (-0.2,2.2) to[out=135,in=  0] (-1.8,2.5);
\draw[rotate=180][orange](-0.9,2.5) node[below]{$\alpha_4$};
\draw[rotate=180][orange][->][line width=0.8pt] (-2.5,1.8) to[out=-90,in=135] (-2.1,0.9);
\draw[rotate=180][orange] (-2.4,1.25) node[right]{$\beta_4$};
\draw[rotate=180][orange] (-2.05,2.15) node{$\frakv(ghA)$};
\draw[red] (0,3.5) node{$\frakv(abcA)$};
\draw[red][->][line width=0.5pt] (2.5,2.5) to[out=90,in=0] (0.65,3.5);
\draw[red][->][line width=0.5pt] (-0.65,3.5) to[out=180,in=135] (-1.41,1.6);
\draw[red] ( 1.5,3.5) node{$\gamma_1$};
\draw[red] (-1.5,3.5) node{$\delta_1$};
\draw[rotate=180][red] (0,3.5) node{$\frakv(efgA)$};
\draw[rotate=180][red][->][line width=0.5pt] (2.5,2.5) to[out=90,in=0] (0.65,3.5);
\draw[rotate=180][red][->][line width=0.5pt] (-0.65,3.5) to[out=180,in=135] (-1.41,1.6);
\draw[rotate=180][red] ( 1.5,3.5) node{$\gamma_2$};
\draw[rotate=180][red] (-1.5,3.5) node{$\delta_2$};
\end{tikzpicture}
\caption{The CM-Auslander algebra of the algebra in Example \ref{ex:1}}
\label{fig:CMA of A in ex}
\end{figure}
\end{example}

\begin{example} \rm \label{ex:self-inj}
Let $A = \kk\Q/\I$ be the string algebra whose bounded quiver is given by $\Q=$
\[\xymatrix{
 & 2 \ar[rd]^{y} & \\
1 \ar[ru]^{x} & & 3 \ar[ll]^{z} }\]
and $\I = \{xyzx, yzxy, zxyz\}$.
We can check that $A$ is self-injective. Then all indecomposable modules are G-projective, that is,
\begin{align}
\ind(\modcat A) & = \ind(\Gproj(A)) \nonumber \\
& = \{P(1), P(2), P(3), xA, yA, zA, xyA, yzA, zxA, xyzA, yzxA, zxyA\}. \nonumber
\end{align}
The CM-Auslander algebra $A^{\CMA}$ is isomorphic to $\kk\Q^{\CMA}/\I^{\CMA}$, where $\Q^{\CMA}$ is shown in \Pic \ref{fig:CMA of A in ex:self-inj}, and $\I^{\CMA}$ is generated by
\begin{align}
 & a_{x2}a_{2y}a_{y3}a_{3z}a_{z1}a_{1x},
&& a_{y3}a_{3z}a_{z1}a_{1x}a_{x2}a_{2y},
&& a_{z1}a_{1x}a_{x2}a_{2y}a_{y3}a_{3z},  \nonumber \\
 & a_{x2}a_{2y} - a_{x,xy}a_{xy,y},
&& a_{y3}a_{3z} - a_{y,yz}a_{yz,z},
&& a_{z1}a_{1x} - a_{z,zx}a_{zx,x}, \nonumber \\
 & a_{zx,x}a_{x,xy} - a_{zx,zxy}a_{zxy,xy},
&& a_{xy,y}a_{y,yz} - a_{xy,xyz}a_{xyz,yz},
&& a_{yz,z}a_{z,zx} - a_{yz,yzx}a_{yzx,zx}, \nonumber \\
 & a_{zxy,xy}a_{xy,xyz},
&& a_{xyz,yz}a_{yz,yzx},
&& \text{\ and\ } a_{yzx,zx}a_{zx,zxy}. \nonumber
\end{align}
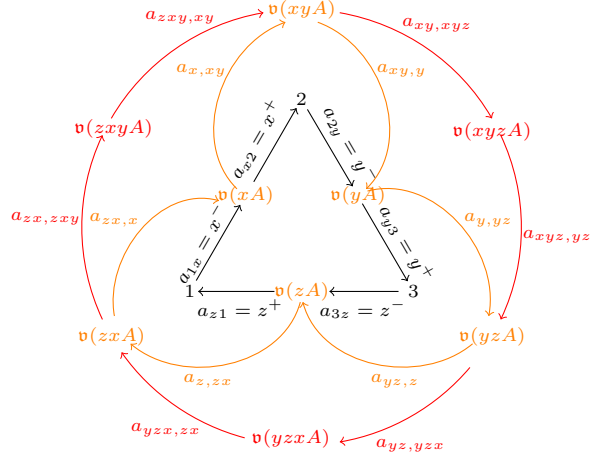
\begin{figure}[htbp]
\centering
\begin{tikzpicture} [scale=0.85] \tiny
\draw [->] (-1.63,-0.83) -- (-0.95, 0.35);
\draw [->] (-0.74, 0.71) -- (-0.08, 1.86);
\draw [->] ( 0.09, 1.84) -- ( 0.79, 0.64);
\draw [->] ( 0.94, 0.37) -- ( 1.64,-0.84);
\draw [->] ( 1.5 ,-1   ) -- ( 0.43,-1   );
\draw [->] (-0.42,-1   ) -- (-1.6 ,-1   );
\draw (-1.55,-0.25) node{{\rotatebox{60}{$a_{1x} =x^{-}$}}};
\draw (-0.71, 1.38) node{{\rotatebox{60}{$a_{x2} =x^{+}$}}};
\draw ( 0.75, 1.28) node{{\rotatebox{-60}{$a_{2y} = y^{-}$}}};
\draw ( 1.61,-0.25) node{{\rotatebox{-60}{$a_{y3} = y^{+}$}}};
\draw (-0.95, -1  ) node[below]{$a_{z1} = z^{+}$};
\draw ( 0.95, -1  ) node[below]{$a_{3z} = z^{-}$};
\draw ( 0   , 2   ) node{$2$};
\draw ( 1.73,-1   ) node{$3$};
\draw (-1.73,-1   ) node{$1$};
\draw[orange] (-0.87, 0.5 ) node{$\frakv(xA)$};
\draw[orange] ( 0.87, 0.5 ) node{$\frakv(yA)$};
\draw[orange] ( 0   ,-1   ) node{$\frakv(zA)$};
\draw[orange] ( 0   , 3.40) node{$\frakv(xyA)$};
\draw[orange] (-1.55, 2.45) node{$a_{x,xy}$};
\draw[orange] ( 1.55, 2.45) node{$a_{xy,y}$};
\draw[orange][->] (-1.00, 0.60) to[out=135, in=200] (-0.25, 3.20);
\draw[orange][->] ( 0.25, 3.20) to[out=-20, in= 45] ( 1.00, 0.60);
\draw[orange][rotate=120] ( 0   , 3.40) node{$\frakv(zxA)$};
\draw[orange][rotate=120] (-1.35, 2.45) node{$a_{z,zx}$};
\draw[orange][rotate=120] ( 1.55, 2.45) node{$a_{zx,x}$};
\draw[orange][rotate=120] [->] (-1.00, 0.60) to[out=135, in=200] (-0.25, 3.20);
\draw[orange][rotate=120] [->] ( 0.25, 3.20) to[out=-20, in= 45] ( 1.00, 0.90);
\draw[orange][rotate=240] ( 0   , 3.40) node{$\frakv(yzA)$};
\draw[orange][rotate=240] (-1.55, 2.45) node{$a_{y,yz}$};
\draw[orange][rotate=240] ( 1.35, 2.45) node{$a_{yz,z}$};
\draw[orange][rotate=240] [->] (-1.00, 0.60) to[out=135, in=200] (-0.25, 3.20);
\draw[orange][rotate=240] [->] ( 0.25, 3.20) to[out=-20, in= 45] ( 1.00, 0.60);
\draw[red] ( 2.94, 1.50) node{$\frakv(xyzA)$};
\draw[red][->][rotate=-10] ( 0.00, 3.40) arc(90:40:3.40);
\draw[red][->][rotate=-65] ( 0.00, 3.40) arc(90:40:3.40);
\draw[red][rotate=-120] ( 2.94, 1.50) node{$\frakv(yzxA)$};
\draw[red][->][rotate=-10][rotate=-120] ( 0.00, 3.40) arc(90:50:3.40);
\draw[red][->][rotate=-75][rotate=-120] ( 0.00, 3.40) arc(90:50:3.40);
\draw[red][rotate=   5][rotate=-240] ( 2.94, 1.50) node{$\frakv(zxyA)$};
\draw[red][->][rotate= -5][rotate=-240] ( 0.00, 3.40) arc(90:40:3.40);
\draw[red][->][rotate=-60][rotate=-240] ( 0.00, 3.40) arc(90:40:3.40);
\draw[red] ( 2.10, 3.16) node{$a_{xy, xyz}$};
\draw[red][rotate= -60] ( 2.10, 3.36) node{$a_{xyz,yz}$};
\draw[red][rotate=-120] ( 2.10, 3.16) node{$a_{yz,yzx}$};
\draw[red][rotate=-180] ( 2.10, 3.16) node{$a_{yzx,zx}$};
\draw[red][rotate=-240] ( 2.10, 3.36) node{$a_{zx,zxy}$};
\draw[red][rotate=-300] ( 1.90, 3.26) node{$a_{zxy,xy}$};
\end{tikzpicture}
\caption{The CM-Auslander algebra of the string algebra in Example \ref{ex:self-inj}}
\label{fig:CMA of A in ex:self-inj}
\end{figure}
Note that other paths satisfying ($\mathcal{R}$3) (see Construction \ref{const:CMA} Step 5)  are elements in $\I^{\CMA}$.
For example,
\begin{center}
$a_{x,xy}a_{xy,y}a_{y,yz}a_{yz,z}a_{z,zx}
= (a_{x,xy}a_{xy,xyz})\cdot(a_{xyz,yz}a_{yz,yzx})\cdot a_{yzx,zx} \in \I^{CMA}$
\end{center}
because $a_{xyz,yz}a_{yz,yzx} \in \I^{\CMA}$.
\end{example}

\section{Applications} \label{sec:appl}

\subsection{The string algebras satisfying G-condition} \label{subsec:G-cond}
In this part, we mainly consider the CM-Auslander algebras of a class of special string algebras.

A relation cycle $\C$ of string algebra $A=\kk\Q/\I$ is called a {\defines gentle relation cycle} (or a {\defines forbidden cycle}) if all relations on $\C$ are paths of length two.
Furthermore, we say a string algebra $A$ satisfies {\defines G-condition} if all relation cycles providing NTIG-projective modules are gentle relation cycles.

\begin{lemma}
Let $A$ be a string algebra satisfying G-condition such that its bounded quiver has at least one gentle relation cycle.
Then any NTIG-projective module in $\Gproj(A)$ is isomorphic to $aA$ where $a$ is an arrow on a gentle relation cycle.
\end{lemma}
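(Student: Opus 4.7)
The plan is to combine Theorem~\ref{thm-Chen} (which identifies NTIG-projective modules with perfect paths) with Lemma~\ref{lemm:per.path}(2) and the G-condition to force every perfect path to have length $1$.

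First I would take an arbitrary NTIG-projective module $G \in \Gproj(A)$. By Theorem~\ref{thm-Chen}, there is a perfect path $p$ on $(\Q, \I)$ with $G \cong pA$. By definition $p$ sits inside some PPS $(p = \wp_1, \wp_2, \ldots, \wp_t = p)$, and by the remark following the definition of PPS all the paths $\wp_i$ lie on a common relation cycle $\C$. Since $\C$ provides the NTIG-projective module $pA$, the G-condition tells us that $\C$ is a gentle relation cycle, i.e.\ every generator of $\I$ lying on $\C$ is a path of length $2$.

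Next I would invoke Lemma~\ref{lemm:per.path}(2): for every $i$, the composition $\wp_{\overline{i}} \wp_{\overline{i+1}}$ is a generator of $\I$. Since this generator lies on the gentle relation cycle $\C$, it has length exactly $2$, so
\[
\ell(\wp_{\overline{i}}) + \ell(\wp_{\overline{i+1}}) = 2 \quad \text{for every } i.
\]
Because each $\wp_{\overline{i}}$ is a non-trivial path (so $\ell(\wp_{\overline{i}}) \geq 1$), the only possibility is $\ell(\wp_{\overline{i}}) = \ell(\wp_{\overline{i+1}}) = 1$. In particular $p = \wp_1$ is a single arrow $a$, and this arrow lies on the gentle relation cycle $\C$. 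This gives $G \cong aA$, as required.

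There is no real obstacle here: once one observes that Lemma~\ref{lemm:per.path}(2) forces consecutive terms in a PPS to compose to a generator of $\I$, the length-$2$ hypothesis on the relations of a gentle relation cycle immediately collapses all lengths in the PPS to $1$. The only subtlety worth flagging is making sure the cycle $\C$ on which $p$ lives really is gentle; this is precisely where the hypothesis that $A$ satisfies the G-condition is used, since the condition is phrased in terms of those relation cycles that actually produce at least one G-projective module, and $\C$ is one such by construction.
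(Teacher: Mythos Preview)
Your proof is correct and follows essentially the same route as the paper: both rely on the fact that any perfect path lies on a relation cycle, that the G-condition forces this cycle to be gentle, and that the length-two relations on a gentle relation cycle force every perfect path in a PPS to be a single arrow. The paper phrases the last step via Construction~\ref{const:perpaths}, explicitly listing the PRSs on a gentle relation cycle and reading off the resulting NTIG-projective modules $\{a_jA\}$, whereas you bypass this enumeration by invoking Lemma~\ref{lemm:per.path}(2) directly to get $\ell(\wp_i)+\ell(\wp_{i+1})=2$; this is a minor streamlining rather than a different argument.
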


\begin{proof}
Assume $A=\kk\Q/\I$ with a gentle relation cycle
\[\C = a_0a_1\cdots a_{n-1}\ (s(a_i) = i, 0\le i\le n-1), \]
where $a_{\overline{j}}a_{\overline{j+1}}\in \I$ ($\overline{x}$ equals to $x$ modulo $n$).
By Construction \ref{const:perpaths}, the above relation cycle provide $n$ PPSs which can be constructed by the PRSs
\begin{center}
$\{(a_{\overline{j}}a_{\overline{j+1}},
a_{\overline{j+2}}a_{\overline{j+3}},
\cdots,
a_{\overline{j+n-2}}a_{\overline{j+n-1}})\}_{0\le j\le n-1}$.
\end{center}
Then $\C$ provides $n$ NTIG-projective modules $\{a_jA\}_{0\le j\le n-1}$.
\end{proof}

Then the following result can be deduced directly by Theorem \ref{thm:the CMA of string}.

\begin{corollary} \label{coro:the CMA of string G}
Let $A=\kk\Q/\I$ be a string algebra.
\begin{itemize}
\item[{\rm (1)}]
If $A$ satisfies G-condition, then $A^{\CMA}\cong\Q^{\CMA}/\I^{\CMA}$ can be calculated by the following steps.
\begin{itemize}
  \item[{\rm Step 1.}] $\Q^{\CMA}$ is obtained by splitting any arrow $\alpha: x\to y$ on gentle relation cycle of $(\Q, \I)$
      into two arrows $\alpha^{-}: x\to \frakv(\alpha A)$ and $\alpha^{+}: \frakv(\alpha A) \to y$.
  \item[{\rm Step 2.}] $\I^{\CMA}$ is generated by the following two classes of paths:
  \begin{itemize}
    \item[{\rm(i)}] all paths of length two which are of the form
     $\alpha^{+}\beta^{-}$ {\rm(}$\alpha$ and $\beta$ are arrows on the same gentle relation cycle of $(\Q, \I)${\rm)};
    \item[{\rm(ii)}] all paths which are of the form $\wp^{*} = \alpha^{*}_1 \cdots \alpha^{*}_l$, where $\wp = \alpha_1 \cdots \alpha_l\in \I$.
  \end{itemize}
\end{itemize}

\item[{\rm(2)}]
 $A^{\CMA}$ is a string algebra if and only if $A$ satifies G-condition. Moreover, in this case,  $A^{\CMA}$ is a string algebra satisfying G-condition whose all relation cycles do not provide NTIG-projective module.
\end{itemize}
\end{corollary}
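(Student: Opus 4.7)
My plan is to derive both parts of the corollary as direct specializations of Theorem~\ref{thm:the CMA of string} and Construction~\ref{const:CMA}, using the preceding lemma to identify exactly which NTIG-projectives occur. The preceding lemma forces $\scrG^1(A)$ to consist precisely of the modules $\alpha A$ with $\alpha$ an arrow on some gentle relation cycle, and $\scrG^l(A)=\varnothing$ for every $l\ge 2$. Feeding this into Construction~\ref{const:CMA}, Steps~1--4 collapse to the arrow-splitting procedure in Step~1 of the corollary, because Step~3 only introduces further arrows when NTIG-projectives of distinct lengths share endpoints on a common relation cycle, which never happens here. For Step~5, the family ($\mathcal{R}$1) produces exactly the paths $\wp^{*}$ of item~(ii), and item~(i) is the distinguished subfamily arising from the length-$2$ generators $\alpha\beta$ of $\I$ supported on gentle relation cycles, for which $\wp^{*}=\alpha^{+}\beta^{-}$. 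The diagrams in ($\mathcal{R}$2) and the sequences in ($\mathcal{R}$3) each require either an NTIG-projective supported on a path of length~$\ge 2$ or an arrow $a_{pq}$ joining two NTIG-projective vertices on a common cycle, and neither configuration can arise under the G-condition.

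For the $(\Leftarrow)$ direction of~(2), I would verify the three string axioms on $(\Q^{\CMA},\I^{\CMA})$ vertex by vertex. At each new vertex $\frakv(\alpha A)$ the local picture is a single arrow in ($\alpha^{-}$) and a single arrow out ($\alpha^{+}$), which is trivially admissible. At each original vertex $v\in \Q_0$ the splitting preserves both the in- and out-degrees of $v$ in $\Q$, and the generating relations $\alpha^{+}\beta^{-}$ forbid exactly the compositions in $\Q^{\CMA}$ that $\alpha\beta\in \I$ already forbade in $\Q$, so the `at most one non-zero continuation' axiom is inherited from $A$. For $(\Rightarrow)$, I argue by contrapositive: if the G-condition fails then some relation cycle $\C$ carries either a PPS with $\ell(p)\ge 2$ or a generator of $\I$ of length $\ge 3$. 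In either case, Construction~\ref{const:CMA} triggers a commutative square of type ($\mathcal{R}$2)(i)--(iv), producing a vertex in $\Q^{\CMA}$ (either one of the $\frakv(pA)$ or an endpoint of $p$) at which two distinct arrows on the same side have simultaneously non-zero compositions with a common adjacent arrow; this violates the uniqueness axiom of a string quiver, as illustrated by the vertices of $\Q^{\CMA}$ in Example~\ref{fig:CMA of A in ex:self-inj}.

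For the `moreover' clause, the explicit description in~(1) shows that every cycle of $\Q^{\CMA}$ containing a new vertex arises from a gentle relation cycle $\C=a_0 a_1\cdots a_{n-1}$ of $(\Q,\I)$ by replacing each $a_i$ with the length-$2$ path $a_i^{-}a_i^{+}$, yielding a cycle $\C^{\CMA}$ of length $2n$ whose relations are precisely the length-$2$ paths $a_i^{+}a_{i+1}^{-}$ from item~(i). Hence $\C^{\CMA}$ is a gentle relation cycle of $A^{\CMA}$ and the G-condition persists. To see that $\C^{\CMA}$ provides no NTIG-projective, I invoke Proposition~\ref{prop:perpaths}: consecutive relations $a_i^{+}a_{i+1}^{-}$ and $a_{i+1}^{+}a_{i+2}^{-}$ meet only at the vertex $\frakv(a_{i+1}A)$ and share no arrow, so $r_i\sqcap r_{i+1}$ is trivial and the PRS identity $r_i=(r_{i-1}\sqcap r_i)(r_i\sqcap r_{i+1})$ fails. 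The main obstacle I anticipate is the $(\Rightarrow)$ direction of~(2): locating uniformly which $\frakv$-vertex develops the extra branching, since the two failure modes (perfect paths of length~$\ge 2$ versus non-gentle cycles carrying PPSs) require slightly different bookkeeping to convert into an explicit string-axiom violation.
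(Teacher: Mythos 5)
Your derivation of part (1) and of the ``if'' direction of part (2) is essentially the argument the paper intends: the paper offers no written proof beyond ``deduced directly by Theorem \ref{thm:the CMA of string},'' and your route---use the preceding lemma to see that $\ind(\Gproj(A))\setminus\projcat(A)=\scrG^1(A)$ consists exactly of the arrow modules on gentle relation cycles, so Steps 1--4 of Construction \ref{const:CMA} reduce to arrow-splitting and ($\mathcal{R}$2)--($\mathcal{R}$3) are vacuous---is the correct specialization. Your treatment of the ``moreover'' clause via Proposition \ref{prop:perpaths} (the doubled relations $a_i^{+}a_{i+1}^{-}$ and $a_{i+1}^{+}a_{i+2}^{-}$ meet only in the vertex $\frakv(a_{i+1}A)$, so $r_i\sqcap r_{i+1}$ is trivial and no PRS exists) is also the right mechanism.

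The one genuine soft spot is the $(\Rightarrow)$ direction of (2), which you yourself flag as unfinished. You try to exhibit a violation of the degree/uniqueness axioms of a string quiver at some vertex of $\Q^{\CMA}$, but this is not guaranteed to occur uniformly: if $s(p)$ has only one outgoing arrow in $\Q$, the splitting produces only two outgoing arrows at $s(p)$, and whether two of the resulting compositions with a common incoming arrow are simultaneously non-zero depends on the lengths of the neighbouring perfect paths, which is exactly the bookkeeping you say you have not carried out. The paper's (later) argument is much shorter and avoids this entirely: if $A$ fails the G-condition then some PPS contains a perfect path $p$ with $\ell(p)\ge 2$ (by Lemma \ref{lemm:per.path}(2) a non-gentle relation cycle carrying a PRS forces a generator of length $\ge 3$, hence a perfect path of length $\ge 2$), and then ($\mathcal{R}$2)(i) puts the commutativity relation $p-a_{s(p)p}a_{p\,t(p)}$ into $\I^{\CMA}$; since a string quiver requires its ideal to be generated by \emph{paths}, $(\Q^{\CMA},\I^{\CMA})$ is not a string quiver. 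You should replace your degree-counting argument by this observation (it is also what makes your ``either a PPS with $\ell(p)\ge 2$ or a generator of length $\ge 3$'' dichotomy unnecessary---only the first alternative is relevant, and it always occurs). A final cosmetic point: calling $\C^{\CMA}$ a ``gentle relation cycle'' of $A^{\CMA}$ is delicate, since its length-two relations overlap only in vertices and so $\C^{\CMA}$ may fail the overlap condition in the definition of relation cycle; this only strengthens the conclusion you want, but the phrasing should be adjusted.
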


Thanks to \cite{BR1987}, the string algebras are known as an important  class of tame algebras, and whether it is representation-finite or not is determined by that if it has a band or not.  The following result shows that the representation type of any string algebra satisfying G-condition can be determined by its CM-Auslander algebra.

\begin{theorem} \label{thm:G-condition}
A string algebra satisfying G-condition is representation-finite if and only if its CM-Auslander algebra is representation-finite.
\end{theorem}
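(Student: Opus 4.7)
The G-condition, via Corollary~\ref{coro:the CMA of string G}(2), ensures that $A^{\CMA}$ is again a string algebra, so Theorem~\ref{thm:indmod} applies to both $A$ and $A^{\CMA}$. Any single band provides infinitely many pairwise non-isomorphic band modules through $\mathscr{J}$, whereas the absence of bands in a finite-dimensional string algebra forces the set of strings (and hence the set of indecomposables) to be finite. Thus a string algebra is representation-finite if and only if it has no bands, and the theorem reduces to exhibiting a bijection between $\band(A)$ and $\band(A^{\CMA})$. My plan is to define two explicit mutually inverse maps between these sets, built directly from the description of $A^{\CMA}$ in Corollary~\ref{coro:the CMA of string G}(1).

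The setup is as follows. Under the G-condition, $\Q^{\CMA}$ is obtained from $\Q$ by \emph{subdividing} each arrow $\alpha$ that lies on a gentle relation cycle, replacing it by $\alpha^-\alpha^+$ through a new vertex $\frakv(\alpha A)$; the ideal $\I^{\CMA}$ is generated by (i) the length-two paths $\alpha^+\beta^-$ coming from the length-two relations $\alpha\beta\in\I$ on gentle cycles and (ii) the ``transferred'' paths $\wp^{*}=\alpha_{1}^{*}\cdots\alpha_{l}^{*}$ associated to each generator $\wp=\alpha_1\cdots\alpha_l\in\I$. I define $E\colon \band(A)\to\band(A^{\CMA})$ by expanding every occurrence of a split arrow $\alpha$ in a band $b$ into the pair $\alpha^-\alpha^+$ (and every $\alpha^{-1}$ into $(\alpha^+)^{-1}(\alpha^-)^{-1}$), and $C\colon \band(A^{\CMA})\to\band(A)$ by contracting each consecutive occurrence of $\alpha^-\alpha^+$ (respectively $(\alpha^+)^{-1}(\alpha^-)^{-1}$) back to $\alpha$ (respectively $\alpha^{-1}$). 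The local geometry makes $C$ well defined: at each $\frakv(\alpha A)$ the only incoming arrow is $\alpha^-$ and the only outgoing arrow is $\alpha^+$, so any reduced string of $A^{\CMA}$ visiting this vertex is forced to traverse $\alpha^-\alpha^+$ or $(\alpha^+)^{-1}(\alpha^-)^{-1}$ consecutively. The cyclicity, primitivity, and ``$b^2$ is a string'' conditions are manifestly preserved by both maps, and $E$ and $C$ commute with cyclic rotation and inversion, so they descend to mutually inverse bijections on equivalence classes.

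The substantive verification, and the main obstacle, is that $E$ and $C$ preserve the string property, i.e.\ avoidance of the respective ideals. The key matching claim is that a sub-word of $E(b)$ equal to $\wp^{*}$ for some $\wp\in\I$ (respectively equal to $\alpha^+\beta^-$ for $\alpha\beta\in\I$ on a gentle cycle) arises from the corresponding sub-word $\wp$ (respectively $\alpha\beta$) of $b$ under the subdivision, and conversely for $C$. The delicate point is that the outermost factors $\alpha_{1}^{+}$ and $\alpha_{l}^{-}$ of $\wp^{*}$ are only \emph{half}-expansions, so a copy of $\wp^{*}$ inside $E(b)$ typically begins at an intermediate vertex $\frakv(\alpha_1 A)$ in the middle of the expansion of $\alpha_{1}$, and one must check that contraction still recovers exactly the sub-word $\wp$ of $b$. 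A related point is to confirm that under the G-condition the relation families $(\mathcal{R}2)$ and $(\mathcal{R}3)$ of Construction~\ref{const:CMA} contribute no further generators beyond (i) and (ii) above: since every perfect path has length one, Step~3 of Construction~\ref{const:CMA} introduces no arrow between two vertices of the form $\frakv(pA)$, and so the squares of $(\mathcal{R}2)$ and the chains of $(\mathcal{R}3)$ degenerate. Once this combinatorial bookkeeping is settled, $E$ and $C$ yield $\band(A)=\varnothing \iff \band(A^{\CMA})=\varnothing$, and the theorem follows.
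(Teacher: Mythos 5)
Your proposal is correct and follows essentially the same route as the paper: both reduce representation-finiteness of a string algebra to the absence of bands and then transfer bands between $(\Q,\I)$ and $(\Q^{\CMA},\I^{\CMA})$ via the splitting $\alpha\mapsto\alpha^{-}\alpha^{+}$ of arrows on gentle relation cycles. Your explicit contraction map $C$ is just a more detailed formulation of what the paper phrases as a bijective correspondence between circuits of the two bounded quivers, and the bookkeeping you flag about the half-expanded endpoints of $\wp^{*}$ is likewise left implicit in the paper's argument.
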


\begin{proof}
If a string algebra $A=\kk\Q/\I$ satisfying G-condition is representation-infinite,
then there exists a band $b$ on the bounded quiver $(\Q, \I)$ (see \cite[Lemma 2.2]{M2019}, or c.f. \cite{CB1995, VFCB1998}).
Thus, we have two cases as follows.
\begin{itemize}
  \item If there exists an arrow $\alpha$ on $b$ which is also an arrow on a relation cycle $\C$ of $(\Q, \I)$,
    then $\alpha$ corresponds to the path $\alpha^{*} = \alpha^{-}\alpha^{+}$ of length two on $(\Q^{\CMA}, \I^{\CMA})$.
    In this case, $b^{*}$ is also a band on $(\Q^{\CMA}, \I^{\CMA})$.

  \item Otherwise, $b$ also occur as a band in $(\Q^{\CMA}, \I^{\CMA})$.
\end{itemize}
Therefore, $A^{\CMA}$ is representation-infinite.

If $A=\kk\Q/\I$ is a representation-finite algebra, then for any circuit $\C$ of $(\Q, \I)$,
there exists a path $\wp$ on $\C$ such that $\wp$ is a generator of $(\Q, \I)$.
It is holds that $\wp^{*} \in \I^{\CMA}$ is a path on the circuit $\C^{*}$ of $(\Q^{\CMA}, \I^{\CMA})$.
By Corollary \ref{coro:the CMA of string G}, any circuit of $(\Q^{\CMA}, \I^{\CMA})$ are corresponded by some circuit of $(\Q, \I)$, and this correspondence is bijective.
Thus $(\Q^{\CMA}, \I^{\CMA})$ has no band, i.e., $A^{\CMA}$ is representation-finite.
\end{proof}

\begin{remark} \rm \label{rmk:repr-type}
For general representation-finite string algebra $A$, its CM-Auslander algebra $A^{\CMA}$ may be representation-infinite.
For instance, the string algebras provided in Examples \ref{ex:1} and \ref{ex:self-inj} are representation-finite,
but their CM-Auslander algebras are representation-infinite.
\end{remark}

Suppose that $A$ is a string algebra. On one hand, if $A$ satisfies G-condition, then by the proof of Theorem \ref{thm:G-condition}, it is easy to see that the CM-Auslander algebra $A^{\CMA} = \kk\Q^{\CMA}/\I^{\CMA}$ of $A$ is a string algebra.
Conversely, by the construction of CM-Auslander algebra, if the CM-Auslander algebra $A^{\CMA}$ of $A$ is not a string algebra, then $\Gproj(A)$ contains at least one non-trivial indecomposable G-projective module which is of the form $pA$, where $p$ is a path of length $l(p)\ge 2$.
In this case, $\I^{\CMA}$ contains at least one commutative relation (see Construction \ref{const:CMA}, Step 5 ($\mathcal{R}$2)). Thus, we have the following result.

\begin{corollary}
Let $A$ be a string algebra. Then $A^{\CMA}$ is a string algebra if and only if $A$ is a string algebra satisfying G-condition. 
\end{corollary}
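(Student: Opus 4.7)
The plan is to prove both directions by directly inspecting the output of Construction \ref{const:CMA} and determining when its relation ideal $\I^{\CMA}$ is generated by paths of length $\ge 2$, which is the key requirement distinguishing a string-quiver ideal. In fact this corollary is essentially a clean restatement of Corollary \ref{coro:the CMA of string G}(2), and the proof consists of assembling observations already established in the previous subsection.

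For the direction ``$A$ satisfies G-condition $\Rightarrow$ $A^{\CMA}$ is a string algebra,'' I would invoke Corollary \ref{coro:the CMA of string G}(2) directly, or argue as follows. Under G-condition, every NTIG-projective module is of the form $\alpha A$ with $\alpha$ an arrow on a gentle relation cycle, so every perfect path has length $1$. Hence Step 3 of Construction \ref{const:CMA} only introduces arrow pairs $\alpha^{-}, \alpha^{+}$ and never an ``inter-vertex'' arrow $a_{qp}$ between two vertices of the form $\frakv(\cdot)$ (which would require $\ell(q)<\ell(p)$). Consequently none of the configurations (i)--(iv) in ($\mathcal{R}$2) can arise, because each of them needs either a perfect path of length $\ge 2$ or an equation $q=pp'$ between perfect paths of different lengths. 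The remaining generators of $\I^{\CMA}$, which come from ($\mathcal{R}$1) and the degenerate instances of ($\mathcal{R}$3), are genuine paths of length $\ge 2$, so $\I^{\CMA}$ is monomial. A routine check on $\Q^{\CMA}$ then verifies the string-quiver axioms: splitting each perfect arrow $\alpha$ into $\alpha^{-}\alpha^{+}$ preserves the bound ``at most two arrows in/out of each vertex,'' and the admissible-continuation condition at the new vertex $\frakv(\alpha A)$ holds trivially.

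For the converse, I would argue by contrapositive: assume $A$ fails G-condition. Then some relation cycle $\C$ of $(\Q,\I)$ providing a NTIG-projective module is not gentle, i.e. $\C$ carries a generator of $\I$ of length $\ge 3$. By Lemma \ref{lemm:per.path}(2) every such generator equals $\wp_i\wp_{i+1}$ for some PPS on $\C$, which forces either $\ell(\wp_i)\ge 2$ or $\ell(\wp_{i+1})\ge 2$. Thus there exists a perfect path $p$ with $\ell(p)\ge 2$, and since $\ell(p)\ge 2$ the arrows of $p$ remain in $\Q^{\CMA}$ (they are not replaced by $\alpha^{-}\alpha^{+}$). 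Construction \ref{const:CMA} Step 3 inserts arrows $a_{s(p)p}$ and $a_{pt(p)}$ (or their replacements along shorter perfect paths), and then ($\mathcal{R}$2)(i) places the commutative generator $p - a_{s(p)p}a_{pt(p)}$ into $\I^{\CMA}$. This element is not a path, so $\I^{\CMA}$ is not monomial; by the string-quiver axioms $A^{\CMA}$ cannot be a string algebra.

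The main obstacle is justifying that the commutative relation from ($\mathcal{R}$2)(i) is not redundant, i.e.\ is not already forced by the monomial generators of $\I^{\CMA}$. This is where the earlier analysis does the real work: Lemma \ref{lemm:Gproj(I)} provides the canonical factorisation $\hbar_p = f_1 f_2$ through $pA$, and the one-dimensionality statements in Lemmas \ref{lemm:pA-left}--\ref{lemm:pA-right} together with Theorem \ref{thm:the CMA of string} show that $p$ and $a_{s(p)p}a_{pt(p)}$ represent the same nonzero endomorphism of $\bigoplus_{G\in\ind\Gproj(A)}G$ but are distinct elements of $\kk\Q^{\CMA}$, so their difference is a genuine (non-monomial) minimal generator of $\I^{\CMA}$.
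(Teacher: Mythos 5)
Your proof is correct and follows essentially the same route as the paper, which likewise gets the forward direction from Corollary \ref{coro:the CMA of string G}(2) and, for the converse, observes that a perfect path $p$ with $\ell(p)\ge 2$ forces a commutative (non-monomial) generator of $\I^{\CMA}$ via ($\mathcal{R}$2)(i); your closing paragraph on the non-redundancy of that relation is slightly more careful than what the paper records. One small citation slip: Lemma \ref{lemm:per.path}(2) asserts that the compositions $\wp_{\overline{i}}\wp_{\overline{i+1}}$ are generators of $\I$, not that every generator on $\C$ has this form, but your conclusion still stands because if every perfect path on $\C$ had length one, the length-two relations $\wp_{\overline{i}}\wp_{\overline{i+1}}$ would tile $\C$ and no minimal generator of length $\ge 3$ could survive on it.
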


\subsection{The Cohen-Macaulay Auslander algebras of gentle algebras}
Recall that a finite dimensional $\kk$-algebra is called a gentle algebra if
its bounded quiver is gentle (see Section \ref{sec:string and gentle}).
Obviously, any gentle algebra is also a string algebra. Then the CM-Auslander algebra $A^{\CMA}$ of gentle algebra $A$
can be calculated by Corollary \ref{coro:the CMA of string G},
where the length of every path $\wp^{*}$ as described in the Step 2 (ii) of
Corollary \ref{coro:the CMA of string G} is equal to two.
Furthermore, by the definition of gentle algebra, we obtain the following corollary
which is originally proved by Chen and Lu in \cite[Theorem 3.5]{CL2019}.

\begin{corollary} \label{coro:the CMA of gentle}
Let $A=\kk\Q/\I$ be a gentle algebra, then $A^{\CMA}\cong\Q^{\CMA}/\I^{\CMA}$,
where $(\Q^{\CMA},\I^{\CMA})$ can be calculated by the following steps.
\begin{itemize}
  \item[{\rm Step 1.}] $\Q^{\CMA}$ is obtained by splitting every arrow $\alpha: x\to y$ on gentle relation cycle of $(\Q, \I)$
      to two arrows $\alpha^{-}: x\to \frakv(\alpha A)$ and $\alpha^{+}: \frakv(\alpha A) \to y$.
  \item[{\rm Step 2.}] All generators of $\I$ are paths of length two which can be divided to two cases:
  \begin{itemize}
    \item[{\rm Case 1.}] the path $a_1a_2$ of length two on gentle relation cycle;

    \item[{\rm Case 2.}] the path $a_1a_2$ of length two whose arrows $a_1$ and $a_2$ are not  on gentle relation cycle,
  \end{itemize}
  and
  \[ \I^{\CMA} = \langle \wp = a_1a_2 \mid \text{$\wp$ is a path satisfying {\rm Case 1} or {\rm Case 2}} \rangle. \]
\end{itemize}
Furthermore, $A^{\CMA}$ is a gentle algebra.
\end{corollary}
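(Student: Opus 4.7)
The plan is to bootstrap Corollary \ref{coro:the CMA of string G}. First I would observe that every gentle algebra automatically satisfies the G-condition: in a gentle algebra all generators of $\I$ are paths of length two, so every relation cycle of $(\Q,\I)$ is itself a gentle relation cycle. Consequently Step 1 of the present construction is literally Step 1 of Corollary \ref{coro:the CMA of string G}, and every generator of $\I^{\CMA}$ furnished by that corollary is a path of length exactly two, since the type-(ii) generators $\wp^{*}=a_1^{*}a_2^{*}$ have length two whenever $\ell(\wp)=2$.

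Next I would reconcile the resulting generators with the Case 1 / Case 2 split in the statement. The key observation is a dichotomy: in a gentle algebra, if $a_1 a_2\in\I$ is a generator, then either both $a_1$ and $a_2$ lie on a common gentle relation cycle, or neither lies on any gentle relation cycle. For if $a_1$ lies on a gentle relation cycle $\C$, then following $\C$ forward from $a_1$ one encounters a relation of the form $a_1 c\in\I$; the gentle uniqueness condition forces such $c$ to be unique, hence $c=a_2$, and therefore $a_2\in\C$ as well. A symmetric argument excludes the mixed case where only $a_2$ lies on a cycle. Under this dichotomy, the type-(i) generators of Corollary \ref{coro:the CMA of string G} match Case 1 (and coincide with the type-(ii) generators arising from relations on cycles), while the remaining type-(ii) generators $a_1 a_2$ match Case 2.

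Finally, to see that $A^{\CMA}$ is itself gentle, I note that all generators of $\I^{\CMA}$ are paths of length two by the above. For the gentle quiver axioms, at each newly created vertex $\frakv(\alpha A)$ there is a unique incoming arrow $\alpha^{-}$ and a unique outgoing arrow $\alpha^{+}$, with $\alpha^{-}\alpha^{+}\notin\I^{\CMA}$ (no Case 1 or Case 2 relation takes this form); at each original vertex the local incidence structure of $\Q^{\CMA}$ matches that of $\Q$ up to replacing some incident arrows by their $\pm$-halves, so the at-most-two-arrows and uniqueness-of-zero/non-zero-compositions conditions descend from $(\Q,\I)$ via the dichotomy above. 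The main obstacle is precisely the dichotomy in the second paragraph: once the gentle-uniqueness argument rules out the mixed case of a length-two relation with exactly one arrow on a cycle, the rest reduces to unpacking Corollary \ref{coro:the CMA of string G} and a direct check of the gentle quiver axioms.
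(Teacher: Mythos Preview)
Your proposal is correct and follows the same route as the paper: the paper simply remarks that a gentle algebra is a string algebra satisfying the G-condition (since every relation cycle is automatically a gentle relation cycle), applies Corollary \ref{coro:the CMA of string G}, and observes that every $\wp^{*}$ has length two, citing \cite{CL2019} for the gentleness of $A^{\CMA}$. Your argument supplies the dichotomy and the verification of the gentle axioms that the paper leaves implicit, but the strategy is identical.
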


Since gentle algebras are string algebras satisfying G-condition, the following corollary can be directly deduced by Theorem \ref{thm:G-condition}, see also \cite{CL2019}.

\begin{corollary}
A gentle algebra is representation-finite if and only if its CM-Auslander algebra is representation-finite.
\end{corollary}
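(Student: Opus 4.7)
The plan is to reduce this statement directly to Theorem \ref{thm:G-condition}, so the only work is to verify that every gentle algebra automatically satisfies the G-condition defined in subsection \ref{subsec:G-cond}. Once that is done, the biconditional is nothing more than an application of the previous theorem to the special case $A = \kk\Q/\I$ with $(\Q,\I)$ gentle.

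First, I would recall from Definition \ref{def:string} that any gentle algebra is in particular a string algebra, and that for a gentle quiver $(\Q,\I)$ the admissible ideal $\I$ is generated by paths of length exactly two. So the generators $r_0,r_1,\ldots,r_{t-1}$ of $\I$ appearing in the definition of a relation cycle $\C$ are automatically paths of length two whenever $A$ is gentle.

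Next, I would invoke the definition of a \emph{gentle relation cycle} from subsection \ref{subsec:G-cond}, namely a relation cycle whose relations are all of length two. Since every generator of $\I$ already has length two in the gentle case, every relation cycle of $(\Q,\I)$ is a gentle relation cycle. In particular, every relation cycle that carries a perfect relation sequence (and hence, by Construction \ref{const:perpaths}, supplies an NTIG-projective module) is gentle. Thus the G-condition is satisfied vacuously, independent of the combinatorics of which cycles actually admit PPSs.

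Finally, applying Theorem \ref{thm:G-condition} to such an $A$ yields the desired equivalence: $A$ is representation-finite if and only if $A^{\CMA}$ is representation-finite. There is essentially no obstacle here; the only point to be careful about is not to confuse the notion of a gentle \emph{relation cycle} (a combinatorial condition on a single cycle inside a string quiver) with that of a gentle \emph{algebra} (a global condition on $(\Q,\I)$). Once this distinction is made explicit, the corollary is immediate, and it matches the alternative proof via Corollary \ref{coro:the CMA of gentle} plus the general result for string algebras with G-condition.
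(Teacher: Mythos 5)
Your proposal is correct and follows exactly the paper's own route: the paper likewise notes that gentle algebras are string algebras satisfying the G-condition (since all generators of $\I$ have length two, every relation cycle is a gentle relation cycle) and then cites Theorem \ref{thm:G-condition}. Your extra care in distinguishing gentle relation cycles from gentle algebras is a reasonable elaboration of the same one-line argument.
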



 Now, we observe the derived representation type of a gentle algebra and its CM-Auslander algebra. Let $X^{\bullet} = (X^t, d^t)$ of the form of
\[ \xymatrix{ \cdots \ar[r] & X^{-1} \ar[r]^{d^{-1}} & X^0 \ar[r]^{d^0} & X^1 \ar[r] & \cdots} \]
be an indecomposable complex in the derived category $D^b(A)$ of a finite dimensional $\kk$-algebra $A$.
The {\defines cohomological dimension vector} of $X^{\bullet}$ is the vector
\begin{center}
  $\pmb{d}(X^{\bullet}) := (\dim_{\kk}\HH^n(X^{\bullet}))_{n\in \ZZ} \in \NN^{(\ZZ)}$,
\end{center} 
where $\HH^n(X^{\bullet}) = \mathrm{Ker}(d^n)/ \mathrm{Im}(d^{n-1})$ is the $n$-cohomology of $X^{\bullet}$.
Furthermore, we define that
\begin{itemize}
  \item the {\defines cohomological length} $\hl(X^{\bullet})$ of $X^{\bullet}$ is
    $\max\{\dim_{\kk}\HH^i(X^{\bullet}) \mid i\in \ZZ\}$,
  \item the {\defines cohomological width} $\hw(X^{\bullet})$ of $X^{\bullet}$ is 
    $\max\{j-i+1 \mid \HH^i(X^{\bullet}) \ne 0 \ne \HH^j(X^{\bullet})\}$, 
  \item the {\defines cohomological range} $\hr(X^{\bullet})$ of $X^{\bullet}$ is the product
    $\hl(X^{\bullet}) \cdot \hw(X^{\bullet})$, 
\end{itemize}
and we call that a finite dimensional $\kk$-algebra $A$ is 
\begin{itemize}
  \item {\defines derived discrete}, if for any $\pmb{d}\in \NN^{(\ZZ)}$, there are only finite indecomposable objects in $D^b(A)$ (up to isomorphism) whose cohomological dimension vector equals to $\pmb{d}$.
  \item {\defines strongly derived unbounded}, if there is a sequence $r_1, r_2, \cdots \in \NN$ such that, for any $i\in \NN$, the number of indecomposable objects in $D^b(A)$ with cohomological range being $r_i$ is infinite (up to isomorphism and shift).
\end{itemize}

In [ZH16], it was shown that any algebra is either a derived discrete algebra or a strongly derived unbounded algebra. To be more precise,  any gentle algebra is derived discrete if and only if its bounded quiver has no homotopy band (see \cite[Section 2.2]{ALP2016} for the definition). Thus, by the proof of Theorem \ref{thm:G-condition}, the following result holds.

\begin{corollary}
Let $A$ be a gentle algebra. Then
\begin{itemize}
  \item[\rm(1)] $A$ is derived discrete if and only if $A^{\CMA}$ is derived discrete. 
  \item[\rm(2)] $A$ is strongly derived unbounded if and only if $A^{\CMA}$ is strongly derived unbounded.
\end{itemize}
\end{corollary}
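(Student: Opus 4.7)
The plan is to reduce both parts to the characterization of derived-discreteness for gentle algebras in terms of homotopy bands, and then imitate the band-tracking argument in the proof of Theorem \ref{thm:G-condition}. Recall from the cited reference that a gentle algebra is derived discrete if and only if its bounded quiver has no homotopy band. By the dichotomy of \cite{ZH16} that every finite-dimensional algebra is either derived discrete or strongly derived unbounded, part (2) follows immediately from part (1). Hence the entire content is part (1).

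First, I would set up the translation between strings/bands of $(\Q,\I)$ and those of $(\Q^{\CMA}, \I^{\CMA})$. By Corollary \ref{coro:the CMA of gentle}, $A^{\CMA}$ is again gentle, and its quiver is obtained from $\Q$ by splitting every arrow $\alpha$ lying on a gentle relation cycle into a pair $\alpha^{-}, \alpha^{+}$, while the new relations are either composites $\alpha^{+}\beta^{-}$ coming from gentle relation cycles or length-two relations pulled back from $\I$. For each walk $w$ on $(\Q,\I)$ I would define its lift $w^{*}$ on $(\Q^{\CMA},\I^{\CMA})$ by replacing each traversal of an arrow $\alpha$ with $\alpha^{-}\alpha^{+}$ (or its inverse) whenever $\alpha$ is split, and leaving the other arrows unchanged. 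The central observation is that this assignment induces a bijection between (equivalence classes of) strings and bands of $A$ and those of $A^{\CMA}$. Indeed, the splitting operation preserves the avoidance-of-relations condition on both sides: the added relations $\alpha^{+}\beta^{-}$ correspond exactly to the gentle relations on cycles that already forbade $\alpha\beta$ in $\I$.

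Next, I would promote this bijection to homotopy bands. A homotopy band (in the sense of \cite{ALP2016}) is a band whose substrings satisfy certain bracketing/sign conditions tied to the gentle relations. Because those conditions depend only on how the arrows of the band meet the length-two gentle relations at each vertex, and because the splitting in Construction \ref{const:CMA} preserves this local data arrow-by-arrow (the triple $(\alpha^{-}, \frakv(\alpha A), \alpha^{+})$ plays exactly the role of $\alpha$ with the same adjacencies), a band $b$ of $(\Q,\I)$ is a homotopy band if and only if its lift $b^{*}$ is a homotopy band of $(\Q^{\CMA},\I^{\CMA})$. Conversely, given a homotopy band of $A^{\CMA}$, I would contract each pair $\alpha^{-}\alpha^{+}$ back to $\alpha$ (and observe that no homotopy band can traverse $\alpha^{-}$ without immediately continuing with $\alpha^{+}$, since $\frakv(\alpha A)$ has in- and out-degree exactly one in $\Q^{\CMA}$), obtaining a homotopy band of $A$.

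Combining these observations: $(\Q,\I)$ admits a homotopy band if and only if $(\Q^{\CMA},\I^{\CMA})$ does, hence $A$ is derived discrete if and only if $A^{\CMA}$ is derived discrete. The main obstacle I expect is a careful verification of the homotopy-band condition under the splitting, because the definition in \cite{ALP2016} involves subtle local signs at every vertex of the band; I would handle this by checking that the vertex $\frakv(\alpha A)$ introduces no new choices (its unique in- and out-arrow $\alpha^{-}$ and $\alpha^{+}$ force a single continuation) and that at the original vertices of $\Q$ the local geometry is unchanged. With this bijection in hand, (1) is immediate, and (2) follows from the derived-discrete/strongly derived unbounded dichotomy.
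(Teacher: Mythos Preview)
Your proposal is correct and follows essentially the same route as the paper: the paper states the characterization ``gentle $\Leftrightarrow$ derived discrete iff no homotopy band,'' invokes the dichotomy of \cite{ZH16}, and then simply writes ``by the proof of Theorem \ref{thm:G-condition}, the following result holds'' without further detail. Your write-up is in fact more careful than the paper's, since you explicitly isolate the passage from ordinary bands to homotopy bands and note the key reason the lift/contraction is forced at the new vertices (namely that each $\frakv(\alpha A)$ has a unique in- and out-arrow), a point the paper leaves implicit.
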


\subsection{On homological dimensions of gentle algebras}

Recall form \cite[Section 2.2]{AG2008} that,
a {\defines non-trivial forbidden path} $F = \alpha_1\cdots\alpha_n$ of gentle algebra $A = \kk\Q/\I$
is a path on the quiver $\Q$ such that $\alpha_{i}\alpha_{i+1} \in \I$.
A {\defines trivial forbidden path} is a trivial path corresponded by $v\in\Q_0$ such that there exists at most one arrow ending at $v$ and there exists at most one arrow starting at $v$, and if $\alpha, \beta \in \Q_1$ are such that $t(\alpha) = v = s(\beta)$ then $\alpha\beta \in \I$.
We say a forbidden path $F$ is a {\defines forbidden thread} if one of the following holds.
\begin{itemize}
  \item For any arrows $\beta$ and $\beta'$ such that $t(\beta) = s(F)$ and $t(F) = s(\beta')$
    we have $\beta\alpha_1 \notin \I$ and $\alpha_n\beta' \notin \I$, respectively.
  \item  $F$ is a trivial forbidden path ($F$ is called a trivial forbidden thread in this case).
\end{itemize}

\begin{lemma} \label{lemm:forbidden}
Let $A$ be a gentle algebra with at least one gentle relation cycle $\C$ and $F=\alpha_1\cdots\alpha_n$ be a forbidden path of length $n$.
If there exists an integer $i$ with $1\le i\le n$ such that $\alpha_i$ is an arrow on $\C$,
then $F$ is a forbidden path on $\C$.
\end{lemma}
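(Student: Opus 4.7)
The plan is a two-sided induction starting from the arrow $\alpha_i$ that is known to lie on $\C$, using the defining uniqueness property of a gentle bounded quiver to propagate membership in $\C$ to all neighboring arrows of $F$.

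First I would set up the key uniqueness statement. Write $\C = c_1 c_2 \cdots c_m$ (indices modulo $m$), and recall that since $\C$ is a \emph{gentle} relation cycle, every pair of consecutive arrows $c_j c_{j+1}$ lies in $\I$ (and these are the only generators of $\I$ supported on $\C$, because in a gentle algebra all relations have length two). Suppose $\alpha_i = c_j$ for some $j$. Because $A$ is gentle, at the vertex $t(c_{j-1}) = s(c_j)$ there is at most one arrow $\beta$ with $t(\beta) = s(c_j)$ and $\beta c_j \in \I$; by the previous sentence that arrow is $c_{j-1}$. Dually, at $t(c_j) = s(c_{j+1})$, the unique arrow $\beta'$ with $s(\beta') = t(c_j)$ and $c_j \beta' \in \I$ is $c_{j+1}$.

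Next I would execute the induction. If $i > 1$, the forbidden path condition forces $\alpha_{i-1}\alpha_i \in \I$, so by the uniqueness just recalled $\alpha_{i-1} = c_{j-1}$, which is on $\C$. If $i < n$, the relation $\alpha_i \alpha_{i+1} \in \I$ together with the dual uniqueness gives $\alpha_{i+1} = c_{j+1}$, again on $\C$. Iterating leftwards through $\alpha_{i-1}, \alpha_{i-2}, \ldots, \alpha_1$ and rightwards through $\alpha_{i+1}, \ldots, \alpha_n$, I conclude that every arrow of $F$ coincides with the corresponding arrow of $\C$ (with indices on $\C$ shifted by the same offset throughout), so $F$ is literally a subpath of $\C$, i.e., a forbidden path on $\C$.

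The only potential obstacle is a boundary subtlety: one must check that the induction can never ``escape'' $\C$, i.e., that at no step does the forbidden-path relation $\alpha_{j}\alpha_{j+1}\in \I$ fail to force $\alpha_{j+1}$ onto $\C$. This is handled precisely by the gentle hypothesis together with the fact that $\C$ is a \emph{gentle} relation cycle: the ``next arrow in $\I$'' relation at each vertex of $\C$ is unique and is given by the next arrow on $\C$. Since $F$ is a single forbidden path (not a forbidden thread yet), I do not need to worry about what happens at the endpoints $s(F)$ and $t(F)$ beyond the induction terminating there, so no further case analysis on threads is required.
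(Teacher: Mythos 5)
Your proposal is correct and follows essentially the same route as the paper: both arguments use the gentle uniqueness condition (at most one arrow continuing a relation at each vertex) together with the fact that consecutive arrows of a gentle relation cycle form relations, to force $\alpha_{i\pm 1}$ to coincide with the neighboring arrows of $\C$, and then propagate by induction in both directions. The paper phrases each step as a contradiction ($\alpha_{i+1}\neq c_{\overline{j+1}}$ would give $\alpha_i\alpha_{i+1}\notin\I$), while you state the uniqueness directly, but this is the same argument.
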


\begin{proof}
Assume $\C$ is of the form
\[\C = c_0c_1\cdots c_{\ell-1}\ \text{whose length is\ } \ell.\]
Then there exists an integer $j$ with $1\le j\le \ell$ such that $\alpha_i = c_j$.
If $\alpha_{i+1} \ne c_{\overline{j+1}}$ ($i\le n-1$), where $\overline{x}$ is $x$ modulo $\ell$,
then $\alpha_{i+1}$ is an arrow starting at $t(\alpha_i)=t(c_j)$.
By the definition of gentle algebra, we have $\alpha_i\alpha_{i+1} = c_{j}\alpha_{i+1} \notin \I$
because $\C$ is a gentle relation cycle, we obtain a contradiction.
Thus $\alpha_{i+1} = c_{\overline{j+1}}$ is an arrow on $\C$.
Similarly, we have $\alpha_{i-1} = c_{\overline{j-1}}$ if $i\ge 2$.
We can show that this lemma by induction.
\end{proof}

The following lemma shows that the global dimension $\gldim A$ and self-injective dimension $\id A$ of gentle algebra $A$ can be described by forbidden paths and forbidden threads, respectively.

\begin{lemma} \label{lemm:dim}
Let $A=\kk\Q/\I$ be a gentle algebra, $\mathfrak{f}(A)$ be the set of all forbidden paths of $A$,
and $\mathfrak{F}(A)$ be the set of all forbidden threads of $A$.
Then we have
\begin{itemize}
  \item[{\rm(1)}]
    \[\gldim A = \sup_{F\in\mathfrak{f}(A)} \ell(F) =
    \begin{cases}
    \infty, & \text{$A$ has at least one gentle relation cycle}; \\
    \sup\limits_{F\in\mathfrak{F}(A)} \ell(F), & otherwise;
    \end{cases} \]

  \item[{\rm(2)}] {\rm \cite[Theorem 3.4]{GR2005}}
    \[\id A = \sup_{F\in\mathfrak{F}(A)} \ell(F).\]
\end{itemize}
\end{lemma}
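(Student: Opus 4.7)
The plan is to reduce part (1) to a computation of $\pd S(v)$ for each simple right module, exploiting the syzygy calculus specific to gentle algebras; part (2) is the content of \cite[Theorem 3.4]{GR2005} cited in the statement and requires no independent argument here.

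Since $\gldim A = \sup_{v \in \Q_0} \pd S(v)$, I would fix a vertex $v$ and analyze the minimal projective resolution of $S(v)$. The projective cover is $P(v) = e_v A$, and $\rad P(v) = \bigoplus_{\alpha : v \to \cdot} \alpha A$ is a sum of at most two indecomposable string modules, one for each arrow leaving $v$. For each arrow $\alpha : v \to w$, the summand $\alpha A$ has top $S(w)$; the projective cover $P(w) \twoheadrightarrow \alpha A$, $e_w \mapsto \alpha$, kills exactly those paths $\gamma$ starting at $w$ with $\alpha\gamma \in \I$. Because $\I$ is generated by paths of length two and $A$ is gentle, this condition is equivalent to $\gamma$ beginning with the \emph{unique} arrow $\alpha'$ (if any) satisfying $\alpha\alpha' \in \I$. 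Hence $\Omega(\alpha A) \cong \alpha' A$ when $\alpha'$ exists and is $0$ otherwise. Iterating, $\Omega^n S(v)$ decomposes as a direct sum of modules of the form $(\alpha_1 \cdots \alpha_n)A$ indexed by forbidden paths $\alpha_1\cdots\alpha_n$ starting at $v$, and the resolution terminates at step $n$ precisely when no such path admits a further extension by an arrow in relation. Thus
\[ \pd S(v) = \sup\{\ell(F) \mid F \in \mathfrak{f}(A),\ s(F) = v\}, \]
and supping over $v$ gives $\gldim A = \sup_{F \in \mathfrak{f}(A)} \ell(F)$.

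To establish the two-sided case distinction, I split on whether $(\Q,\I)$ contains a gentle relation cycle. If $\C = c_0 \cdots c_{\ell-1}$ is one, the iterations $\C, \C^2, \ldots$ (as arrow sequences) yield forbidden paths of arbitrary length, forcing $\gldim A = \infty$. Otherwise, every forbidden thread is automatically a forbidden path, so $\sup_{\mathfrak{F}(A)} \ell(F) \le \sup_{\mathfrak{f}(A)} \ell(F)$; for the reverse, given any $F = \alpha_1\cdots\alpha_n \in \mathfrak{f}(A)$, the gentle axioms produce a unique extension $\alpha_0$ (resp.\ $\alpha_{n+1}$) at either end with $\alpha_0\alpha_1 \in \I$ (resp.\ $\alpha_n\alpha_{n+1} \in \I$), whenever such an arrow exists. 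A repeated vertex appearing during iterated extension would, together with the uniqueness of the continuations, produce a cycle whose every consecutive pair lies in $\I$; by Lemma \ref{lemm:forbidden} this is a gentle relation cycle, contradicting our standing assumption. Since $\Q$ is finite, the extension must terminate, yielding a forbidden thread of length $\ge n$. This gives the remaining equality.

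The main obstacle is the syzygy identification $\Omega(\alpha A) \cong \alpha' A$, together with the fact that the syzygies stay indecomposable as strings of the above form. The gentle hypothesis is essential twice here: uniqueness of the arrow $\alpha'$ with $\alpha\alpha' \in \I$ forces the kernel in the projective cover step to be a \emph{single} indecomposable summand, and uniqueness of the arrow in the continuation not in $\I$ keeps the string modules $\alpha A$ well-controlled; without gentleness one would instead face branching syzygies, and the passage from "forbidden paths" to "length of a minimal resolution" would require significantly more delicate bookkeeping.
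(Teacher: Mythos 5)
Your proposal is correct, but there is nothing in the paper to compare it against: the authors give no proof of this lemma at all. Part (2) is quoted from \cite[Theorem 3.4]{GR2005}, and part (1) is left implicit as standard gentle-algebra folklore. Your syzygy argument is exactly the standard way to supply the missing proof of (1): the identifications $\rad P(v)=\bigoplus_{\alpha\colon v\to\cdot}\alpha A$ and $\Omega(\alpha A)\cong\alpha' A$ (with $\alpha'$ the unique arrow satisfying $\alpha\alpha'\in\I$, or $0$ if none exists) are valid precisely for the reasons you give --- $\I$ is generated by length-two paths and gentleness makes $\alpha'$ unique --- and they yield $\pd S(v)=\sup\{\ell(F)\mid F\in\mathfrak{f}(A),\ s(F)=v\}$, hence the first equality in (1). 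The case distinction is also handled correctly: iterating a gentle relation cycle produces forbidden paths of unbounded length, and in the absence of such a cycle the deterministic two-sided extension of a forbidden path terminates and lands in a forbidden thread. Two small imprecisions, neither a real gap: the pigeonhole step should be phrased in terms of a repeated \emph{arrow} rather than a repeated vertex (uniqueness of the in-relation continuation then forces eventual periodicity, and a minimal period is a cycle with all arrows distinct whose consecutive compositions lie in $\I$, i.e.\ a gentle relation cycle); and the citation of Lemma \ref{lemm:forbidden} at that point is not quite what is needed --- that lemma assumes a gentle relation cycle is already present --- whereas the conclusion you want follows directly from the definition of gentle relation cycle.
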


By Lemma \ref{lemm:forbidden}, all forbidden paths of gentle algebra $A$ can be divided to two classes:
\begin{itemize}
  \item[(I)] the forbidden paths which are on gentle relation cycles;
  \item[(II)] the forbidden paths whose all arrows are not arrows of any gentle relation cycles.
\end{itemize}
We denote by $\mathfrak{f}_1(A)$ (resp.  $\mathfrak{f}_2(A)$) the set of all forbidden paths belong in (I) (resp.  (II)),
then $\mathfrak{f}_1(A)\cup \mathfrak{f}_2(A) = \mathfrak{f}(A)$. Furthermore, it is easy to see that
\begin{align}\label{formula:idA}
\id A  = \sup_{F\in\mathfrak{f}_2(A)} \ell(F)
\end{align}
by Lemma \ref{lemm:dim}.
Therefore, we immediately obtain the description of the self-injective dimension of any gentle algebra $A$
by the forbidden threads of its CM-Auslander algebra $A^{\CMA}$.

Furthermore, we have the following corollary.

\begin{corollary} \label{coro:fdim=gldim}
Let $A$ be a gentle algebra.
\begin{itemize}
  \item[{\rm(1)}] If one of following condition holds
    \begin{itemize}
      \item $\gldim A < \infty$,
      \item $\gldim A = \infty$ and $\gldim A^{\CMA} \ge 3$,
    \end{itemize}
   then \[\id A^{\CMA} \ (= \gldim A^{\CMA})\ = \id A.\]
  \item[{\rm(2)}] Otherwise, \[\id A^{\CMA} \ (=\gldim A^{\CMA})\ = 2 \ge \id A.\]
\end{itemize}
\end{corollary}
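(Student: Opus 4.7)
The plan is to use that $A^{\CMA}$ is again a gentle algebra (Corollary \ref{coro:the CMA of gentle}) and then compare the combinatorial formula for $\id A^{\CMA}$ with the one in formula (\ref{formula:idA}) for $\id A$. First, I would observe that $A^{\CMA}$ contains no gentle relation cycle: by Corollary \ref{coro:the CMA of string G}(2), no relation cycle of $A^{\CMA}$ provides an NTIG-projective module, whereas every gentle relation cycle of a gentle algebra does (by the lemma preceding Corollary \ref{coro:the CMA of string G}). Applied to the gentle algebra $A^{\CMA}$, Lemma \ref{lemm:dim}(1) then forces $\gldim A^{\CMA} < \infty$, so in particular $\id A^{\CMA} = \gldim A^{\CMA} = \sup_{F\in\mathfrak{F}(A^{\CMA})} \ell(F)$.

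The key step will be to classify the forbidden threads of $A^{\CMA}$. By Corollary \ref{coro:the CMA of gentle}, the generators of $\I^{\CMA}$ split into Case 1 relations $\alpha^+\beta^-$ (for consecutive arrows $\alpha,\beta$ on a gentle relation cycle of $A$) and Case 2 relations $a_1a_2$ (with $a_1,a_2$ absent from any gentle relation cycle). A gentleness check at the endpoints of each cycle arrow rules out any ``mixed'' relation involving one split and one non-split arrow. Because each split vertex $\frakv(\alpha A)$ has $\alpha^-$ as unique incoming and $\alpha^+$ as unique outgoing arrow, and $\alpha^-\alpha^+$ is not a relation, the arrow $\alpha^+$ can occur in a forbidden path only at its leftmost position and $\alpha^-$ only at its rightmost. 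Hence every forbidden thread of $A^{\CMA}$ falls into one of three types: $(\alpha)$ threads made entirely of arrows of $A$ that avoid all gentle relation cycles, $(\beta)$ threads of the shape $\alpha^+\beta^-$ with $\alpha,\beta$ consecutive on a gentle relation cycle of $A$, or $(\gamma)$ trivial (length-zero) threads.

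Next, I would identify the type $(\alpha)$ threads with the forbidden threads of $A$. Any forbidden thread of $A$ lies in $\mathfrak{f}_2(A)$, because on a gentle relation cycle a forbidden path can always be lengthened by prepending the preceding cycle arrow; thus the ``non-extendability'' condition in $A^{\CMA}$ (restricted to non-split arrows, since $\beta^+$-extensions cannot yield a Case 1/Case 2 relation) matches the one in $A$. By Lemma \ref{lemm:dim}(2) together with formula (\ref{formula:idA}) this yields $\sup_{(\alpha)} \ell(F) = \id A$, while type $(\beta)$ threads exist and have length $2$ precisely when $\gldim A = \infty$, and type $(\gamma)$ threads contribute $0$.

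Combining these, if $\gldim A < \infty$ then $A$ has no gentle relation cycle, hence no NTIG-projective module and $A^{\CMA} = A$, giving $\id A^{\CMA} = \id A$. If $\gldim A = \infty$ then $\gldim A^{\CMA} = \max\{\id A, 2\}$: the hypothesis $\gldim A^{\CMA}\ge 3$ forces $\id A\ge 3$, so $\id A^{\CMA} = \id A$, proving (1); otherwise $\gldim A^{\CMA}=2\ge \id A$, proving (2). The main obstacle is the combinatorial bookkeeping in the second paragraph---pinning down exactly which paths in $\Q^{\CMA}$ can be forbidden, and checking that the splitting procedure introduces only the length-two threads $\alpha^+\beta^-$---since without this taxonomy the comparison of the two supremums would not be transparent.
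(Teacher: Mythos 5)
Your proposal is correct and follows essentially the same route as the paper: show $A^{\CMA}$ is gentle with no gentle relation cycles, decompose its forbidden threads into those inherited from $\mathfrak{f}_2(A)$ and the length-two threads created by splitting the cycle arrows, and then compare suprema via Lemma \ref{lemm:dim} and formula (\ref{formula:idA}). Your identification of the split-cycle threads as $\alpha^{+}\beta^{-}$ is in fact the more carefully stated version of what the paper writes, so no gap here.
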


\begin{proof}
Since $A$ is a gentle algebra, so is $A^{\CMA}$ by Corollary \ref{coro:the CMA of gentle}.
All forbidden paths of $A^{\CMA}$ can be decided by
\begin{align}\label{formula in thm:dim}
\mathfrak{f}(A^{\CMA}) = (\mathfrak{f}_1(A))^{*} \cup \mathfrak{f}_2(A) = \mathfrak{f}_2(A^{\CMA})
\end{align}
where
\[(\mathfrak{f}_1(A))^{*} = \bigcup\limits_{F = \alpha_1\cdots\alpha_n \in \mathfrak{f}_1(A)}\{\alpha_1^{-}\alpha_1^{+}, \cdots, \alpha_n^{-}\alpha_n^{+}\},
\text{\ and\ } \mathfrak{f}_1(A^{\CMA}) = \varnothing.\]

If $\gldim A < \infty$, then $A\cong A^{\CMA}$. In this case, $\id A = \gldim A = \gldim A^{\CMA}$.

If $\gldim A = \infty$, then $A$ has gentle relation cycles, say $\C_1, \cdots, \C_r$.
Assume $\C_i = c_{i,1}\cdots c_{i,l_i}$ and $s(c_{i,j}) = v_{i,j}$ ($1\le i\le r$, $1\le j\le l_i$).
Then the quiver $\Q^{\CMA}$ of $A^{\CMA}$ can be obtained by splitting all arrows $c_{i,j}: v_{i,j} \to v_{i,j+1}$
to two arrows $c_{i,j}^{-}: v_{i,j} \to \frakv(c_{i,j}A)$ and $c_{i,j}^{-}: \frakv(c_{i,j}A) \to v_{i,j+1}$,
and the admissible ideal $\I^{\CMA}$ can be obtained by replacing $c_{i,j}c_{i,j+1}$ to $c_{i,j}^{+}c_{i,j+1}^{-}$.
Thus, any gentle relation cycle $\C_i$ of $A$ changes to an oriented cycle $\C_i^{*} = c_{i,1}^{-}c_{i,1}^{+}\cdots c_{i,l_i}^{-}c_{i,l_i}^{+}$ of $A^{\CMA}$ whose length is $2l_i$,
and, by (\ref{formula in thm:dim}), the length of arbitrary forbidden paths on $\C_i^{*}$ is a path in $(\mathfrak{f}_1(A))^*$ with length two.
Thus, by Lemma \ref{lemm:dim} (1), we have
\[\gldim A^{\CMA} = \sup_{F\in\mathfrak{f}(A^{\CMA})} \ell(F)
= \sup_{F\in\mathfrak{f}_2(A^{\CMA})} \ell(F) \ge 2.\]
If $\gldim A^{\CMA} \ge 3$, then
\[ \id A \mathop{=\!=\!=}\limits^{(\ref{formula:idA})} \sup_{F\in\mathfrak{f}_2(A)} \ell(F)
= \sup_{F\in\mathfrak{f}_2(A)\cup(\mathfrak{f}_1(A))^{*}} \ell(F)
\mathop{=\!=\!=}\limits^{(\ref{formula in thm:dim})} \gldim A^{\CMA}. \]
Thus (1) holds.
Otherwise,
\[2 = \gldim A^{\CMA} = \sup_{F\in\mathfrak{f}_2(A^{\CMA})} \ell(F)
= \sup_{F\in(\mathfrak{f}_1(A))^{*}} \ell(F) \ge \sup_{F\in\mathfrak{f}_2(A)} \ell(F) = \id A. \]
Notice that $\gldim A^{\CMA} < \infty$, thus $\id A^{\CMA} = \gldim A^{\CMA}$.
We obtain (2).
\end{proof}

\begin{remark} \rm
The more general case of Corollary \ref{coro:fdim=gldim} (1) is
\begin{center}
$\gldim \Lambda^{\CMA} = \max\{\id ({_\Lambda\Lambda}), \id (\Lambda_\Lambda)\}$,
\end{center}
where $\Lambda$ is an any finite CM-type Artin algebra whose the Gorenstein global dimension of $\Lambda$ $\mathrm{G}\text{-}\dim (\modcat \Lambda)$ is greater than or equal to $3$;
and Corollary \ref{coro:fdim=gldim} (2) holds for any finite CM-type Artin algebra with global dimension $\le 2$.
The above conclusions were first proved by Beligiannis (see \cite[Corollary 6.8 (i), (ii), (iii) and (iv)]{B2011}).
\end{remark}


\section*{Acknowledgements}
This work is supported by National Natural Science Foundation of China (Grant Nos. 11961007, 12061060, 12171207), Science Technology Foundation of Guizhou Province (Grant Nos. [2020]1Y405).




\bibliography{referLiu20221024}

\begin{thebibliography}{10}

\bibitem{ALP2016}
K.~K.~Arnesen, R.~Laking and D.~Pauksztello.
\newblock Morphisms between indecomposable complexes in
          the bounded derived category of a gentle algebra.
\newblock {\em Journal  of  Algebra}, 467(1):1--46, 2016.
\newblock
  \href{http://dx.doi.org/10.1016/j.jalgebra.2016.07.019}{doi:10.1016/j.jalgebra.2016.07.019}.

\bibitem{A1967}
M.~Auslander.
\newblock {A}nneaux de {G}orenstein, et torsion en alg\`{e}bre commutative (in:
  {S}eminaire d'{A}lg\`{e}bre {C}ommutative, dirig\'{e} par {P}ierre {S}amuel,
  1966/67).
\newblock {\em Secr\'{e}tariat Math\'{e}matique}, 1(1):69, 1966.

\bibitem{A1971}
M.~Auslander.
\newblock Representation Dimension of Artin Algebras.
\newblock {\em Queen Mary College Mathematics Notes}, 1971.

\bibitem{AB1969}
M.~Auslander and M.~Bridger.
\newblock {\em Stable module theory}.
\newblock Memoirs of the American Mathematical Society, 1969.

\bibitem{AB1989}
M.~Auslander and R.-O. Buchweitz.
\newblock The homological theory of maximal {C}ohen-{M}acaulay approximations
  (in: Colloque en l'honneur de pierre samuel, orsay, 1987).
\newblock {\em M\'{e}moires de la Soci\'{e}t\'{e} Math\'{e}matique de France $($N.S.$)$}, 38: 5--37, 1989.
\newblock \href{http://www.numdam.org/item/?id=MSMF_1989_2_38__5_0}{doi:10.24033/msmf.339}.

\bibitem{AG2008}
D.~Avella-Alaminos and C.~Geiss.
\newblock Combinatorial derived invariants for gentle algebras.
\newblock {\em Journal of Pure and Applied Algebra}, 212(1):228--243, 2008.
\newblock
  \href{https://doi.org/10.1016/j.jpaa.2007.05.014}{doi:10.1016/j.jpaa.2007.05.014}.

\bibitem{B2005}
A.~Beligiannis.
\newblock {C}ohen-{M}acaulay modules,(co)torsion pairs and virtually
  {G}orenstein algebras.
\newblock {\em Journal of Algebra}, 288(1):137--211, 2005.
\newblock
  \href{https://doi.org/10.1016/j.jalgebra.2005.02.022}{doi:10.1016/j.jalgebra.2005.02.022}.

\bibitem{B2011}
A.~Beligiannis.
\newblock On algebras of finite {C}ohen-{M}acaulay type.
\newblock {\em Advances in Mathematics}, 226(1), 2011.
\newblock
  \href{dx.doi.org/10.1016/j.aim.2010.09.006}{doi:10.1016/j.aim.2010.09.006}.

\bibitem{B1986}
R.-O. Buchweitz.
\newblock Maximal {C}ohen-{M}acaulay modules and {T}ate-cohomology over
  {G}orenstein rings.
\newblock 1986.
\newblock Unpublished manuscript. University of {T}oronto {TS}pace,
  \href{https://tspace.library.utoronto.ca/bitstream/1807/16682/1/maximal_cohen-macaulay_modules_1986.pdf}
  {https://tspace.library.utoronto.ca /handle/1807/16682}.

\bibitem{BR1987}
M.~C. Butler and C.~M. Ringel.
\newblock {A}uslander-{R}eiten sequences with few middle terms and applications
  to string algebras.
\newblock {\em Communications in Algebra}, 15(1-2):145--179, 1987.
\newblock
  \href{https://www.tandfonline.com/doi/abs/10.1080/00927878708823416}{doi:10.1080/00927878708823416}.

\bibitem{CL2017}
X.~Chen and M.~Lu.
\newblock {C}ohen-{M}acaulay {A}uslander algebras of skewed-gentle algebras.
\newblock {\em Communications in Algebra}, 45(2):849--865, 2017.
\newblock
  \href{https://doi.org/10.1080/00927872.2016.1175601}{doi:10.1080/00927872.2016.1175601}
  (preprint in 2015,
  \href{https://arxiv.org/abs/1502.03948}{arXiv:1502.03948}).

\bibitem{CL2019}
X.~Chen and M.~Lu.
\newblock {C}ohen-{M}acaulay {A}uslander algebras of gentle algebras.
\newblock {\em Communications in Algebra}, 47(9):3597--3613, 2019.
\newblock
  \href{https://doi.org/10.1080/00927872.2019.1570225}{doi:10.1080/00927872.2019.1570225}.

\bibitem{C2018}
X.-W. Chen.
\newblock The {G}orenstein-projective modules over a monomial algebra.
\newblock {\em Proceedings of the Royal Society of Edinburgh},
  148A(1):1115--1134, 2018.
\newblock
  \href{https://doi.org/10.1017/S0308210518000185}{doi:10.1017/S0308210518000185}.

\bibitem{CB1995}
W.~Crawley-Boevey.
\newblock Tameness of biserial algebras.
\newblock {\em Archiv der Mathematik (Basel)}, 65(5):399--407, 1995.
\newblock
  \href{https://linkspringer.53yu.com/article/10.1007/BF01198070}{doi:10.1007/BF01198070}.

\bibitem{EJ1995}
E.~E. Enochs and O.~M.~G. Jenda.
\newblock Gorenstein injective and projective modules.
\newblock {\em Mathematische Zeitschrift}, 220(1):611--633, 1995.
\newblock
  \href{https://link.springer.com/article/10.1007/BF02572634}{doi:10.1007/BF02572634}.

\bibitem{EJ2000}
E.~E. Enochs and O.~M.~G. Jenda.
\newblock {\em Relative homological algebra}, volume~30.
\newblock De Gruyter Expositions in Mathematics, 2000.
\newblock Berlin, Newyork: Walter de Gruyter, 2011
  \href{https://doi.org/10.1515/9783110803662}{doi:10.1515/9783110803662}.

\bibitem{GR2005}
C.~Geiss and I.~Reiten.
\newblock Gentle algebras are {G}orenstein, {\rm in {{\it {r}epresentations of
  {a}lgebras and {r}elated {t}opics}, {p}roceedings from the 10th
  {i}nternational {c}onference, {t}oronto, {c}anada, {j}uly 15 -- {a}ugust 10,
  2002}}.
\newblock {\em Amer. Math. Soc., Providence, RI, Fields Institute Comm.},
  1(45):129--133, 2005.
\newblock \href{http://dx.doi.org/10.1090/fic/045}{doi:10.1090/fic/045}.

\bibitem{Haf2018CMAuslander}
R.~Hafezi.
\newblock On cohen-macaulay auslander algebras.
\newblock \href{http://arxiv.org/abs/1802.05156}{arXiv:1802.05156}, 2018.

\bibitem{IKM2011}
O.~Iyama, K.~Kato, and J.-I. Miyachi.
\newblock Recollement of homotopy categories and cohen-macaulay modules.
\newblock {\em Journal of K-theory}, 8(3):507--542, 2011.
\newblock
  \href{https://doi.org/10.1017/is011003007jkt143}{10.1017/is011003007jkt143},.

\bibitem{KZ2016}
F.~K\"{o}nig and P.~Zhang.
\newblock {F}rom {CM}-finite to {CM}-free.
\newblock {\em Journal of Pure and Applied Algebra}, 220(2):782--801, 2016.
\newblock
  \href{https://doi.org/10.1016/j.jpaa.2015.07.016}{10.1016/j.jpaa.2015.07.016}.

\bibitem{LZ2010}
Z.-W. Li and P.~Zhang.
\newblock A construction of {G}orenstein-projective modules.
\newblock {\em Journal of Algebra}, 323(6):1802--1812, 2010.
\newblock
  \href{https://doi.org/10.1016/j.jalgebra.2009.12.030}{doi:10.1016/j.jalgebra.2009.12.030}.

\bibitem{LZ2010CMAuslander}
Z.-W. Li and P.~Zhang.
\newblock Gorenstein algebras of finite Cohen-Macaulay type.
\newblock {\em Advances in Mathematics}, 223(1):728--734, 2010.
\newblock
  \href{https://doi.org/10.1016/j.aim.2009.09.003}{doi:10.1016/j.aim.2009.09.003}.

\bibitem{LuoZ2013}
X.-H. Luo and P.~Zhang.
\newblock Monic representations and gorenstein-projective modules.
\newblock {\em Pacific Journal of Mathematics}, 264(1):163--194, 2013.
\newblock
  \href{https://msp.org/pjm/2013/264-1/p07.xhtml}{doi:10.2140/pjm.2013.264.163}.

\bibitem{M2019}
K.~Mousavand.
\newblock $\tau$-tilting finiteness of biserial algebras.
\newblock \href{http://arxiv.org/abs/1904.1151}{arXiv:1904.1151}, 2019.

\bibitem{Pan2012CMAuslander}
S.~Pan.
\newblock Derived equivalences for {C}ohen-{M}acaulay {A}uslander algebras.
\newblock {\em Journal of Pure and Applied Algebra}, 216(2):355--363, 2012.
\newblock \href{https://linkinghub.elsevier.com/retrieve/pii/S0022404911001617}
  {doi:10.1016/j.jpaa.2011.06.017}.

\bibitem{Qin2020CMAuslander}
Y.~Qin.
\newblock Recollements, {C}ohen-{M}acaulay {A}uslander algebras and
  {G}orenstein projective conjecture.
\newblock \href{http://arxiv.org/abs/2001.01163}{arXiv:2001.01163}, 2020.

\bibitem{VFCB1998}
R.~Vila-Freyer and W.~Crawley-Boevey.
\newblock The structure of biserial algebras.
\newblock {\em Journal of the London Mathematical Society}, 57(1):41--54, 1998.
\newblock
  \href{https://doi.org/10.1112/S0024610798005821}{doi:10.1112/S0024610798005821}.
  
\bibitem{Vo01}  D. Vossieck, 
  \newblock The algebras with discrete derived category, 
  \newblock {\em J. Algebra}, 243 (2001), 168-176.

\bibitem{XZ2012}
B.-L. Xiong and P.~Zhang.
\newblock Gorenstein-projective modules over triangular matrix artin algebras.
\newblock {\em Journal of Algebra and its Applications}, 11(04):1250066, 2012.
\newblock
  \href{https://www.worldscientific.com/doi/abs/10.1142/S0219498812500661}{doi:10.1142/S0219498812500661}.

\bibitem{Zh16a}  C. Zhang, 
\newblock On algebras of strongly derived unbounded type.
 \newblock {\em Journal of Pure and Applied Algebra},  220 (2016), 1462-1474. 
 \newblock
 \href{https://doi.org/10.1016/j.jpaa.2015.09.012}{doi:10.1016/j.jpaa.2015.09.012}

\bibitem{Zh16b} C. Zhang, 
\newblock Derived Brauer-Thrall type theorem I for artin algebras. 
\newblock {\em Communications in Algebra}, 44(2016), 3509-3517.
\newblock
 \href{https://doi.org/10.1080/00927872.2015.1085546}{doi:10.1080/00927872.2015.1085546}

\bibitem{ZH16}  C. Zhang, Y. Han, 
\newblock  Brauer-Thrall type theorems for derived module categories.
 \newblock {\em Algebra and Representation Theory}, 19(2016), 1369-1386.
 \newblock
 \href{https://doi.org/10.1007/s10468-016-9622-7}{doi:10.1007/s10468-016-9622-7}

\bibitem{Zhang2013Gorenstein}
P.~Zhang.
\newblock Gorenstein-projective modules and symmetric recollements.
\newblock {\em Journal of Algebra}, 388:65--80, 2013.
\newblock
  \href{https://doi.org/10.1016/j.jalgebra.2013.05.008}{doi:10.1016/j.jalgebra.2013.05.008}.
 
 
\end{thebibliography}

\def\cprime{$'$}

\end{document}